\documentclass{amsart}
\usepackage{amsmath, amssymb, amsthm}
\usepackage{url}
\usepackage{tikz}
\usepackage[hidelinks]{hyperref}
\usepackage{mathtools}
\usepackage{pgfplots}
\usepackage{mathrsfs}
\pgfplotsset{compat=1.15}
\usetikzlibrary{arrows}

\setlength{\textheight}{220mm} \setlength{\textwidth}{155mm}
\setlength{\oddsidemargin}{1.25mm}
\setlength{\evensidemargin}{1.25mm} \setlength{\topmargin}{0mm}

\numberwithin{equation}{section}
\theoremstyle{plain}
\newtheorem{theorem}{Theorem}[section]
\newtheorem{lemma}[theorem]{Lemma}
\newtheorem{remark}[theorem]{Remark}

\newtheorem{corollary}[theorem]{Corollary}
\newtheorem{proposition}[theorem]{Proposition}
\newtheorem{example}[theorem]{Example}
\newtheorem{definition}[theorem]{Definition}

\begin{document}
	\title[Modified Distance Ratio Metrics via Domain Diameter]{Modified Distance Ratio Metrics via Domain Diameter and their geometric implications}
	\subjclass[2020]{Primary: 30F45, 30L15, 51K05; Secondary: 30C65, 30L10, 51M10} 
	\keywords{Distance ratio metric, hyperbolic metric, inner metric, M\"{o}bius transformation, quasiconformal mapping, quasihyperbolic metric, uniform domain.}
	    \date{\today}
	
	\author[B. Maji]{Bibekananda Maji}
	\address{Bibekananda Maji\\ Department of Mathematics \\
		Indian Institute of Technology Indore \\
		Simrol,  Indore,  Madhya Pradesh 453552, India.} 
	\email{bmaji@iiti.ac.in}
	
	\author[P. Naskar]{Pritam Naskar}
	\address{Pritam Naskar\\ Department of Mathematics \\
		Indian Institute of Technology Indore \\
		Simrol,  Indore,  Madhya Pradesh 453552, India.} 
	\email{naskar.pritam2000@gmail.com,   phd2201241008@iiti.ac.in }
	
	\author[S. K. Sahoo]{Swadesh Kumar Sahoo}
	\address{Swadesh Kumar Sahoo\\ Department of Mathematics \\
		Indian Institute of Technology Indore \\
		Simrol,  Indore,  Madhya Pradesh 453552, India.} 
	\email{swadesh.sahoo@iiti.ac.in}
	
\begin{abstract}
    Let $D\subsetneq\mathbb{R}^n,~n\ge 2$, be a domain. In this manuscript, a new version of the Vuorinen's distance ratio metric $j_D$ [{\tt J. Analyse Math.} {\bf 45} (1985), 69--115], denoted by $\zeta_D$, and a version of Gehring-Osgood's distance ratio metric $j_D'$ [{\tt J. Analyse Math.} {\bf 36} (1979), 50--74], denoted by $\zeta_D'$, are introduced to better understand how quasihyperbolic geometry interacts with bounded uniform domains in $\mathbb{R}^n$. We show that the metric $m_D$, introduced in [{\tt arXiv:2505.10964v2}], is the inner metric of $\zeta_D$ and explore their relations to several well-known hyperbolic-type metrics. The paper includes ball inclusion properties of these metrics associated with the metric $m_D$ and other hyperbolic-type metrics. The distortion properties of them are also considered under several important classes of mappings. Furthermore, as an application, we demonstrate that uniform domains can be characterized in terms of metrics $\zeta_D$ and $m_D$. 
\end{abstract}
    \maketitle
\section{\bf Introduction}
Throughout the manuscript, we consider $D\subsetneq\mathbb{R}^n,~n\ge 2$, as a domain.
In \cite{MaNaSa}, the authors studied a metric $m_D$ that agrees with the hyperbolic metric on balls and half-spaces. This metric can also be regarded as a variant of the quasihyperbolic metric $k_D$ in bounded domains, since the two coincide in every unbounded domain. 
We also investigated several geometric properties of $m_D$. These include its equivalence with the hyperbolic and quasihyperbolic metrics, the existence of geodesics, and its curvature properties. 
The distortion properties of $m_D$ under certain special mappings are also considered.
We further established a lower bound for the $m_D$-length of non-trivial closed curves in multiply connected domains, and provided characterizations of uniform domains and John disks.
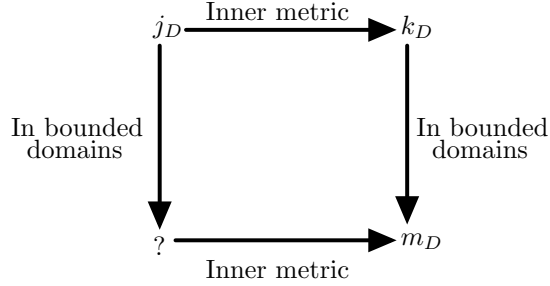
\begin{figure}[t]
\begin{tikzpicture}[line cap=round,line join=round,>=triangle 45,x=0.7cm,y=0.7cm]
	\clip(-7,-5) rectangle (6,5);
	\draw [->,line width=1.5pt] (-2.0,2) -- (2,2);
	\draw [->,line width=1.5pt] (-2.2,-2) -- (2,-2);
	\draw (-2.8,2.44) node[anchor=north west] {$j_D$};
	\draw (1.9,2.44) node[anchor=north west] {$k_D$};
	\draw (-2.8,-1.75) node[anchor=north west] {$?$};
 	\draw (1.9,-1.7) node[anchor=north west] {$m_D$};
	\draw (-1.8,2.7) node[anchor=north west] {$\textrm{Inner metric}$};
	\draw [->,line width=1.5pt] (-2.5,1.7) -- (-2.5,-1.8);
	\draw [->,line width=1.5pt] (2.2,1.7) -- (2.2,-1.7);
	\draw (2.2,0.5) node[anchor=north west] {$\textrm{In bounded}$};
	\draw (2.5,0.1) node[anchor=north west] {$\textrm{domains}$};	
	\draw (-5.5,0.5) node[anchor=north west] {$\textrm{In bounded}$};
	\draw (-5.2,0.1) node[anchor=north west] {$\textrm{domains}$};
	\draw (-1.8,-2.2) node[anchor=north west] {$\textrm{Inner metric}$};
\end{tikzpicture}
\vspace*{-1.5cm}
\caption{Motivation}\label{motivation_fig}
\end{figure}
One of the stunning results in characterizing uniform domains is due to Gehring and Osgood \cite{GO}. They showed that a uniform domain $D$ can be characterized in terms of the quasihyperbolic metric $k_D$ and the distance ratio metric $j_D$. Note that the metric $k_D$ is the inner metric of the distance ratio metric $j_D$.
Uniform domains were also characterized with respect to the metrics $m_D$ and $j_D$ in \cite{MaNaSa}, however, $j_D$ does not possess $m_D$ as its inner metric in bounded domains. Thus, a natural question arises: does there exist any metric whose inner metric is $m_D$? 

In this paper, we take the opportunity to address the above question (see Figure \ref{motivation_fig}) by introducing a new metric $\zeta_D$ designed for this purpose. Specifically, we aim for the metric $\zeta_D$ to satisfy the following properties:
\begin{enumerate}
\item[(i)] $m_D$ is the inner metric of $\zeta_D$;
\item[(ii)] $\zeta_D$ coincides with $j_D$ in any unbounded domain;
\item[(iii)] uniform domains can be characterized in terms of $m_D$ and $\zeta_D$.
\end{enumerate}

With these motivations in mind, we begin in Section \ref{preli} by recalling some preliminary results and fixing our notations. In Section \ref{Sec-Defn}, we present two constructions of new metrics, one of which leads to the desired metric $\zeta_D$ and the other one, denoted by $\zeta_D'$, is equivalent to $\zeta_D$ and it generalizes $j_D'$ of Gehring and Osgood. We also compute explicit expressions for this metric in the unit disk, annulus, and punctured unit disk for completeness.
Section \ref{cmprsn with other metrics} explores the relationship between these two metrics and several well-known metrics, including the generalized hyperbolic metric $m_D$, the hyperbolic metric $h_B$ on any ball $B$, the distance ratio metric $j_D$, and the quasihyperbolic metric $k_D$. 
Section \ref{inner metric-sec} establishes that $\zeta_D$ has $m_D$ as its inner metric. In Section \ref{ball inclusion}, we study ball inclusion properties involving $\zeta_D$, $\zeta_D'$, and $m_D$. 
Next, in Section \ref{behvr undr spcl maps}, we analyze the behavior of $\zeta_D$ under certain special maps. Section \ref{charc-of-domains} presents inequalities involving $\zeta_D$ that help characterize uniform domains. We conclude the paper with final remarks.


\section{\bf Notations and preliminary results}\label{preli}
Throughout this article, we use the following notations:
	\begin{align*}
	 &\mathrm{diam}(D)=\begin{cases}
			d(D), &\text{if $D$ is bounded,}\\
			\infty, &\text{otherwise.} 
		\end{cases}\\
	 &\partial D:=\text{Boundary of}\ D.\\
	 &\delta(z)=\delta_D(z):=\min\left\{|z-\xi|:\xi\in \partial D\right\},\ \text{if there is no confusion about the domain } D.\\
	 &\ell(\gamma):=\text{Euclidean length of }\gamma.\\ 
	 &B=B(z_0,r):=\left\{z\in\mathbb{R}^n:|z-z_0|<r\right\}\textrm{ and } \mathbb{B}=B(0,1).\\
	 &\mathbb{D}:=\left\{z\in\mathbb{C}:|z|<1\right\}.\\
	 &\Gamma_{xy}:=\text{the set of all rectifiable paths in the respective domain joining } x \text{ to } y.\\
	 &a\wedge b:=\min\left\{a,b\right\}.
	\end{align*}
Let us next recall the hyperbolic-type metric $m_D$, which is introduced in \cite{MaNaSa}. For any two points $x,y$ in $D$, it is defined as
	\begin{equation}\label{mDdefn}
		m_{D}(x,y):=\inf_{\gamma\in \Gamma_{xy}}\int_{\gamma}m_D(z)\,|dz|,
	\end{equation}
	where $m_D(z)=d(D)/[\delta(z)(d(D)-\delta(z))]$. As noted in \cite{MaNaSa},
	the metric $m_D$ matches with the hyperbolic metric in balls and half-spaces, whereas it agrees with the quasihyperbolic metric in any unbounded domain $D$. The hyperbolic metrics in a ball $B=B(z_0,r)$ and the upper-half space $\mathbb{H}:=\{z=(z_1,z_2,\ldots,z_n)\in\mathbb{R}^n:\,z_n>0\,\}$ are defined as follows:
	\begin{align}\label{hyperbolic metric}
		h_B(x,y):=\inf_{\gamma\in \Gamma_{xy}}\int_{\gamma}h_B(z)\,|dz|
        ~~\mbox{ and }~~
        h_{\mathbb{H}}(x,y):=\inf_{\gamma\in \Gamma_{xy}}\int_{\gamma}\frac{|dz|}{z_n},
 	\end{align} 
    respectively, with $h_B(z)={2r}/(r^2-|z-z_0|^2)$.
	    The generalization of the hyperbolic metric in the upper half-plane model is known as the quasihyperbolic metric, initiated by Gehring and Palka in \cite{GP}, in the following form:
	\begin{equation}\label{kmetric}
		k_D(x,y):=\smashoperator{\inf_{\gamma\in \Gamma_{xy}}}\hspace{0.3 cm}\int_{\gamma}\frac{|dz|}{\delta_D(z)},
	\end{equation}
	for any two points $x,y$ in $D$. 
    
    The $d$-length, $\ell_d(\gamma)$, of a curve $\gamma:[0,1]\rightarrow X$ in a metric space $(X,d)$ is defined as
	\begin{align*}
		\ell_d(\gamma)=\sup_{\mathcal{P}}\sum_{i=1}^{n}d(\gamma(t_{i-1}),\gamma(t_i)),
	\end{align*}	
	where $0=t_0\leq t_1\leq\dots\leq t_n=1$ is a member of $\mathcal{P}$, the set of all partitions of $[0,1]$. If $\ell_d(\gamma)<\infty$, then we call $\gamma$ rectifiable. The {\em inner metric,} $\tilde{d}$, of $d$ is defined as
	\begin{align*}
		\tilde{d}(x,y)=\inf_{\gamma\in \Gamma_{xy}} \ell_d(\gamma).
	\end{align*}
	The repeated use of triangle inequalities yields $d\le \tilde{d}$. If the reverse inequality is also true, i.e., $\tilde{d}=d$, then $d$ is called a {\em path metric}. It is well-known that the distance ratio metric $j_D$, defined by Vuorinen in \cite{Vu85}, is not a path metric and $\tilde{j}_D=k_D$ (see, for instance, \cite[Lemma~5.3]{Has03} and \cite[Theorem 3.7 (1) and (3)]{Vai96}), where the metric $j_D$ is defined as
	\begin{align}\label{defn-of-j}
		j_D\left(x,y\right)=\log\left(1+\frac{|x-y|}{\delta(x)\wedge \delta(y)}\right).
	\end{align}
	Another version of the distance ratio metric appears in \cite{GO} as
	\begin{align}\label{defn-of-j'}
		j'_D\left(x,y\right)=\frac{1}{2}\log\left\{\left(1+\frac{|x-y|}{\delta(x)}\right) \left(1+\frac{|x-y|}{\delta(y)}\right)\right\}.
	\end{align}
	The following are well-known facts, but we are including the results for ease of reading.
	\begin{lemma}\label{j&j' equiv & reln wth k}
		In any domain $D\subsetneq\mathbb{R}^n$, we have
		\begin{enumerate}
		\item[(i)] $j'_D(x,y)\leq j_D(x,y)\leq 2\,j'_D(x,y)$; and
		\item[(ii)] $j_D(x,y)\leq k_D(x,y)$,
		for all $x,y\in D$.
	    \end{enumerate}
	\end{lemma}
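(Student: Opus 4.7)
\textbf{Plan for Lemma \ref{j&j' equiv & reln wth k}.} Both parts are standard, so I would treat them briskly, isolating where the work actually happens.

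For part (i) I would reduce to elementary estimates on the two factors appearing inside $j_D'$. Without loss of generality assume $\delta(x)\le\delta(y)$, so that $\delta(x)\wedge\delta(y)=\delta(x)$ and $j_D(x,y)=\log\bigl(1+|x-y|/\delta(x)\bigr)$. The key observation is the two-sided bound
\[
1\;\le\;1+\frac{|x-y|}{\delta(y)}\;\le\;1+\frac{|x-y|}{\delta(x)},
\]
which follows solely from $\delta(y)\ge\delta(x)>0$. Multiplying through by $1+|x-y|/\delta(x)$ and taking $\tfrac12\log$ gives $j_D\le 2j_D'$ on the left and $j_D'\le j_D$ on the right simultaneously. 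No further work is needed.

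For part (ii) the argument I would use is the standard Lipschitz-distance trick. Take any $\gamma\in\Gamma_{xy}$ and parametrise it by arc length $s\in[0,\ell(\gamma)]$ starting at $x$. Since $\delta_D$ is $1$-Lipschitz, $\delta(\gamma(s))\le\delta(x)+s$. Then
\[
\int_\gamma\frac{|dz|}{\delta(z)}\;\ge\;\int_0^{\ell(\gamma)}\frac{ds}{\delta(x)+s}\;=\;\log\!\left(1+\frac{\ell(\gamma)}{\delta(x)}\right)\;\ge\;\log\!\left(1+\frac{|x-y|}{\delta(x)}\right),
\]
using $\ell(\gamma)\ge|x-y|$ at the last step. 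Running the same argument with $\gamma$ parametrised backwards from $y$ gives the analogous bound with $\delta(y)$ in the denominator. Taking the larger of the two lower bounds (i.e.\ using $\delta(x)\wedge\delta(y)$) and then infimising over $\gamma\in\Gamma_{xy}$ yields $k_D(x,y)\ge j_D(x,y)$.

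Neither step poses a real obstacle; if anything warrants attention it is the parametrisation convention and the implicit use of the symmetry $x\leftrightarrow y$ to recover the minimum $\delta(x)\wedge\delta(y)$ in part (ii). I would state these explicitly but not dwell on them.
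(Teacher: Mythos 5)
Your proof is correct and complete. Worth noting, though, that the paper itself gives \emph{no} direct proof of this lemma: it is introduced as ``well-known'' and stated without argument. Both parts are later re-derived from the paper's new results by a limiting argument: part (i) is obtained by letting $d(D)\to\infty$ in Proposition~\ref{eqvlnc of two metric} (the analogous equivalence $\zeta_D'\le\zeta_D\le 2\zeta_D'$), and part (ii) is obtained by letting $d(D)\to\infty$ in Theorem~\ref{equiv of zeta&m} (the inequality $m_D\ge\zeta_D\ge\zeta_D'$), since $\zeta_D\to j_D$, $\zeta_D'\to j_D'$, and $m_D\to k_D$ in that limit (see Corollary~\ref{j-leq-k} and the remark after Proposition~\ref{eqvlnc of two metric}). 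Your approach is the self-contained classical one: for (i) the clean two-sided multiplication $1\le 1+|x-y|/\delta(y)\le 1+|x-y|/\delta(x)$ gives both bounds in one stroke, and for (ii) the $1$-Lipschitz bound $\delta(\gamma(s))\le\delta(x)+s$ along an arc-length parametrisation is exactly the standard Gehring--Palka argument. The paper's route is more economical given that it has already built the machinery for $\zeta_D$ and $m_D$ and wants to exhibit $j_D$, $j_D'$, $k_D$ as degenerate ($d(D)=\infty$) instances; your route is preferable when the lemma is needed \emph{before} that machinery exists, which is exactly the position it occupies in Section~\ref{preli}. One minor remark: in (ii) you do not actually need to run the argument twice and take the max; since you may assume $\delta(x)\le\delta(y)$ from the outset (as in (i)), the single bound from $x$ already equals $j_D(x,y)$.
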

The reader is referred to the book \cite{HaKlVu} for additional details on these hyperbolic-type metrics and their different characteristics.
			
	We now recall the definition of a uniform domain introduced by Martio and Sarvas \cite{MS79}. 
    \begin{definition}
    A domain $D$ is said to be a {\em uniform domain}  if there exists a constant $c\geq 1$ with the property that any two points $x,y\in D$ can be joined by a rectifiable path $\gamma$ such that the following two hold:
	$$
		\ell(\gamma) \leq c\,|x-y| ~~\mbox{ and }~~\ell(\gamma[x,z]) \wedge\ell(\gamma[z,y])\leq c\,\delta(z) \text{ for all $z\in\gamma$}.
		$$
\end{definition}
    In 1979, Gehring and Osgood \cite[Corollary 1]{GO} proved that a domain $D$ is uniform if and only if there are constants $c,d\geq 1$ such that for any two points $x,y\in D$
	$$k_D(x,y)\leq c\,j'_D(x,y)+d.$$
	Later it has been shown by Vuorinen in \cite[Example 2.50 (2)]{Vu85} that the above characterization is equivalent to the inequality
    \begin{equation}\label{GO-uniform}
    k_D(x,y)\leq c'\,j_D(x,y),
    \end{equation}
    with $c'\geq1$ depends only on $c$ and $d$.
    
    Let $D,D'$ be domains in $\mathbb{R}^n$. The linear dilatation of a homeomorphism $f:D\rightarrow D'$ at a point $x\in D$ is defined by
    \begin{align}\label{defn of dilatation}
    	H(f,x):=\limsup_{r\rightarrow 0}\cfrac{\sup\left\{ |f(x)-f(y)|:|x-y|=r\right\}}{\inf\left\{ |f(x)-f(z)|:|x-z|=r\right\}}.
    \end{align}
    The map $f$ is said to be {\em $K$-quasiconformal}, with $1\le K<\infty$, if $\sup_{x\in D} H(f,x)<K$.    
    If the map $f$ is not a homeomorphism, then we call it a {\em quasiregular mapping}. Note that the case $K=1$ gives the conformality of $f$. It is appropriate here to remark that due to Heinonen and Koskela \cite[Theorem 1.4]{HK}, the $\limsup$ in the definition of quasiconformality can be replaced with $\liminf$, and hence with only limit.  
    For the basic theory of quasiconformal mappings, the readers are referred to \cite{AhlBook} for the planar case and \cite{VaiBook,VuoBook} for the higher dimension. 

    We conclude this section by recalling the frequently used Bernoulli inequalities, which hold for $t \geq 0$:  
\begin{align*}
\log(1+at) &\leq a \log(1+t), \text{ for } a \geq 1, \\
\log(1+at) &\geq a \log(1+t), \text{ for } 0 \leq a \leq 1.
\end{align*}
    

\section{\bf The new metric $\zeta_D$ as an analogue of $j_D$}\label{Sec-Defn}	
	In this section, our main objective is to introduce two metrics $\zeta_D$ and $\zeta'_D$ in a domain $D$ which agree with $j_D$ and $j'_D$, respectively, when $D$ is unbounded.
 \subsection{Definition and basic properties}       
	For two points $x,y\in D$, we define 
	\begin{align}\label{defn of zeta}
		&\zeta_D(x,y):=\log\left(1+\frac{d(D)\,|x-y|}{\eta_D(x)\wedge\eta_D(y)}\right),
	\end{align}
	and
    \begin{align}\label{defn of zeta'}
		\zeta'_D(x,y):&=\frac{1}{2}\log\left\{\left(1+\frac{d(D)\,|x-y|}{\eta_D(x)}\right)\left(1+\frac{d(D)\,|x-y|}{\eta_D(y)}\right)\right\},
	\end{align}
	where $\eta_D(z):=\delta_D(z)(d(D)-\delta_D(z))$.
	
	It is easy to observe that $\zeta_D$ and $\zeta'_D$ coincide with $j_D$ and $j'_D$, respectively, in the limit as $d(D)\to\infty$. The next step is to establish that both $\zeta_D$ and $\zeta'_D$ are indeed metrics. Some recent work of Mocanu \cite{Moc24,Moc25} addresses similar metric constructions arising from $1$-Lipschitz functions on $D$. However, for our purposes, we require more general versions of Mocanu’s results, which in turn yield the two new metrics $\zeta_D$ and $\zeta'_D$. In this direction, we obtain the following result:
	
	\begin{theorem}\label{Two general metrics}
		Let $D$ be a proper subdomain in $\mathbb{R}^n$, $n\geq 2$. If $f$ is a positive $\alpha$-Lipschitz function on $D$ with $\alpha>0$, then 
		\begin{enumerate}
			\item [(i)] $d(x,y)=\log\left(1+\cfrac{\alpha\,|x-y|}{f(x)\wedge f(y)}\right)$, and
			\item [(ii)] $d'(x,y)=\cfrac{1}{2}\log\left\{\left(1+\cfrac{\alpha\,|x-y|}{f(x)}\right)\left(1+\cfrac{\alpha\,|x-y|}{f(y)}\right)\right\}$
		\end{enumerate}
		define metrics on $D$.
	\end{theorem}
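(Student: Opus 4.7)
The nontrivial content is the triangle inequality in both (i) and (ii); positivity, symmetry, and the implication $d(x,y)=0\Rightarrow x=y$ (which uses $f>0$) are immediate from the definitions. Before diving in, I observe that setting $g:=f/\alpha$ produces a $1$-Lipschitz function, so the theorem formally reduces to the case $\alpha=1$, where the analogue of (i) falls under Mocanu's framework \cite{Moc24,Moc25}. Nonetheless I would give a direct proof that keeps $\alpha$ explicit, using only the $\alpha$-Lipschitz hypothesis and the Euclidean triangle inequality.

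For (ii), my plan is to decompose $2d'(x,z)$ and $2(d'(x,y)+d'(y,z))$ into their four $\log$-summands and pair them so that the full triangle inequality follows from the single estimate
\[
\log\!\left(1+\frac{\alpha|x-z|}{f(x)}\right)\leq \log\!\left(1+\frac{\alpha|x-y|}{f(x)}\right)+\log\!\left(1+\frac{\alpha|y-z|}{f(y)}\right),
\]
together with its mirror after interchanging $x$ and $z$. Exponentiating, clearing denominators, and canceling common terms reduces the displayed inequality to
\[
(|x-z|-|x-y|)\,f(y)\leq |y-z|\bigl(f(x)+\alpha|x-y|\bigr),
\]
which is immediate from the Euclidean triangle inequality $|x-z|-|x-y|\leq|y-z|$ and the $\alpha$-Lipschitz bound $f(y)\leq f(x)+\alpha|x-y|$ (the case $|x-z|\leq|x-y|$ being trivial).

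For (i), the minima $f(x)\wedge f(y)$ and $f(y)\wedge f(z)$ obstruct such a clean pairing, so I would argue by cases. Assume, by symmetry, that $f(x)\leq f(z)$, so $f(x)\wedge f(z)=f(x)$. I would then split on the sign of $f(y)-f(x)$. When $f(y)<f(x)\leq f(z)$, both remaining minima equal $f(y)$, and $1/f(x)\leq 1/f(y)$ combined with the Euclidean triangle inequality lets me bound $\alpha|x-z|/f(x)$ by $\alpha(|x-y|+|y-z|)/f(y)$, which is itself dominated by the full expansion of the right-hand product. When $f(y)\geq f(x)$, the first minimum becomes $f(x)$ and, after clearing denominators, the target inequality reads
\[
(|x-z|-|x-y|)\,(f(y)\wedge f(z))\leq |y-z|\bigl(f(x)+\alpha|x-y|\bigr),
\]
which follows from the Euclidean triangle inequality together with $f(y)\wedge f(z)\leq f(y)\leq f(x)+\alpha|x-y|$, exactly as in (ii).

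The main obstacle is (i): the two-minimum denominator forces a case split, and the Lipschitz bound $f(y)\leq f(x)+\alpha|x-y|$ must be deployed on the correct side to close the gap. Once this bookkeeping is arranged, both statements reduce to the same two elementary ingredients, namely the Euclidean triangle inequality and the $\alpha$-Lipschitz condition on $f$.
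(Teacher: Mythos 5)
Your proof is correct. For (i) you follow essentially the same route as the paper: order the two endpoint values WLOG and split on whether the middle value falls below or above the smaller endpoint; the easy case is handled by monotonicity of $t\mapsto 1/t$ plus the Euclidean triangle inequality, and the hard case is closed by the Lipschitz bound $f(y)\wedge f(z)\leq f(y)\leq f(x)+\alpha|x-y|$, which is exactly the paper's $f(x)+\alpha|x-z|\geq f(z)\geq f(z)\wedge f(y)$ in its labelling of the middle point. For (ii), however, your pairing of the four $\log$-summands is genuinely cleaner than what the paper does. The paper expands the full four-factor product after substituting $|x-y|\leq|x-z|+|z-y|$ and then regroups the resulting quadratic mess by means of two Lipschitz-derived bounds of the form $\alpha/f(z)\geq(\alpha/f(x))/\bigl(1+(\alpha/f(x))|x-z|\bigr)$; it works, but the bookkeeping is opaque. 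You instead observe that $2d'$ decomposes into four logarithms that can be paired so the entire triangle inequality reduces to a single scalar estimate
\[
(|x-z|-|x-y|)\,f(y)\leq |y-z|\bigl(f(x)+\alpha|x-y|\bigr)
\]
and its mirror under $x\leftrightarrow z$. Since this same estimate also closes the hard case of (i), your decomposition makes the shared mechanism behind the two parts visible, which the paper's computation does not. The normalization $g=f/\alpha$ is a sensible observation too, though it has no substantive effect on either argument.
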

	\begin{proof} We only need to show the triangle inequality for both of them, as all other metric properties are trivial.
		\begin{enumerate}
	\item[(i)] We need to prove that
		$$d(x,y)\leq d(x,z)+d(z,y),$$
		which is equivalent to
		\begin{align*}
			1+\frac{\alpha\,|x-y|}{f(x)\wedge f(y)}\leq \left(1+\frac{\alpha\,|x-z|}{f(x)\wedge f(z)}\right)&\left(1+\frac{\alpha\,|z-y|}{f(z)\wedge f(y)}\right)\\
			\iff	\frac{|x-y|}{f(x)\wedge f(y)}\leq \frac{|x-z|}{f(x)\wedge f(z)}+\frac{|z-y|}{f(z)\wedge f(y)}+&\frac{\alpha\,|x-z||z-y|}{(f(x)\wedge f(z))(f(z)\wedge f(y))}.
		\end{align*}
		 Without loss of generality, we may assume that $f(x)\leq f(y)$. Therefore, it is enough to show that 
		\begin{align}\label{rqrdineqlty-1}
			\frac{|x-y|}{f(x)}\leq \frac{|x-z|}{f(x)\wedge f(z)}+\frac{|z-y|}{f(z)\wedge  f(y)}+\frac{d(D)\,|x-z||z-y|}{(f(z)\wedge f(y))(f(x)\wedge f(z))}.
		\end{align}
		
		{\bf Case-I:} Suppose that $f(z)\leq f(x)$. Then $f(x)\wedge f(z)=f(z)=f(z)\wedge f(y)$ and by the Euclidean triangle inequality, we obtain
		\begin{align*}
			|x-y|\leq|x-z|+|z-y|\Rightarrow \frac{|x-y|}{f(x)}\leq \frac{|x-y|}{f(z)}\leq \frac{|x-z|}{f(z)}+\frac{|z-y|}{f(z)},
		\end{align*}
		which gives the required inequality \eqref{rqrdineqlty-1} in this case.
		
		{\bf Case-II:} Let us assume $f(x)\leq f(z)$. Then the right-hand side of \eqref{rqrdineqlty-1} becomes
		\begin{align*}
			\frac{|x-z|}{f(x)}+\frac{|z-y|}{f(z)\wedge f(y)} &+\frac{\alpha\,|x-z||z-y|}{f(x)(f(z)\wedge f(y))}\\
			&=\frac{1}{f(x)}\left\{|x-z|+|z-y|\left(\frac{f(x)+\alpha\,|x-z|}{f(z)\wedge f(y)}\right)\right\}.
		\end{align*}
		Since the function $f$ is $\alpha$-Lipschitz, $f(z)-f(x)\leq \alpha\,|z-x|$. Hence, we have 
			$$f(x)+\alpha\,|x-z|
			\geq f(z)\geq f(z)\wedge f(y)
			\implies \, \frac{f(x)+\alpha\,|x-z|}{f(z)\wedge f(y)}\geq 1.
            $$
				Finally, we obtain \eqref{rqrdineqlty-1}, which follows from the preceding inequality together with the Euclidean triangle inequality. Indeed, we have
		\begin{align*}
			\frac{1}{f(x)}\left\{|x-z|+|z-y|\left(\frac{f(x)+\alpha\,|x-z|}{f(z)\wedge f(y)}\right)\right\}
			\geq \frac{1}{f(x)}\left\{|x-z|+|z-y|\right\}\geq\frac{|x-y|}{f(x)}.
		\end{align*}
		This completes the proof.
			
	\item[(ii)] For the function $d'$, the triangle inequality is equivalent to
		\begin{align}\label{rqrdineqlty-2}
			\left(1+\frac{\alpha|x-y|}{f(x)}\right)
			\left(1+\frac{\alpha|x-y|}{f(y)}\right)
			\leq \left(1+\frac{\alpha|x-z|}{f(x)}\right)&
			\left(1+\frac{\alpha|x-z|}{f(z)}\right)\nonumber\\
			&\left(1+\frac{\alpha|z-y|}{f(z)}\right)
			\left(1+\frac{\alpha|z-y|}{f(y)}\right).
		\end{align}
		By applying the Euclidean triangle inequality on the left-hand side of \eqref{rqrdineqlty-2}, we get
		\begin{align*}
			\left(1+\frac{\alpha|x-y|}{f(x)}\right)
			\left(1+\frac{\alpha|x-y|}{f(y)}\right)
			& \leq \left(1+\alpha\,\frac{|x-z|+|z-y|}{f(x)}\right)\left(1+\alpha\,\frac{|x-z|+|z-y|}{f(x)}\right)\\           
			&\hspace{0.5cm}+2\alpha^2\,\frac{|x-z||z-y|}{f(x)f(y)}+\alpha^2\,\frac{|x-z|^2}{f(x)f(y)}+\alpha^2\,\frac{|z-y|}{f(x)f(y)}\\            
			&=\left(1+\frac{\alpha|x-z|}{f(x)}\right)
			\left(1+\frac{\alpha|z-y|}{f(y)}\right)
			+\alpha\,\frac{|x-z|}{f(y)}\left(1+\frac{\alpha|x-z|}{f(x)}\right)\\            
			&\hspace{0.5cm}+\alpha\,\frac{|z-y|}{f(x)}\left(1+\frac{\alpha|z-y|}{f(y)}\right)+\alpha^2\,\frac{|x-z||z-y|}{f(x)f(y)}\\            
			&\leq \left(1+\frac{\alpha|x-z|}{f(x)}\right)
			\left(1+\frac{\alpha|z-y|}{f(y)}\right)
			+\alpha\,\frac{|x-z|}{f(z)}\left(1+\frac{\alpha|x-z|}{f(x)}\right)\times\\            
			&\hspace{0.6cm}\left(1+\frac{\alpha|z-y|}{f(y)}\right)
			+\alpha\,\frac{|z-y|}{f(z)}\left(1+\frac{\alpha|x-z|}{f(x)}\right)\left(1+\frac{\alpha|z-y|}{f(y)}\right)\\            
			&\hspace{0.5cm}+\alpha^2\,\frac{|x-z||z-y|}{f(z)^2}\left(1+\frac{\alpha|x-z|}{f(x)}\right)\left(1+\frac{\alpha|z-y|}{f(y)}\right),
		\end{align*}
which is exactly the right-hand side of \eqref{rqrdineqlty-2}. The last step follows from the two inequalities: 
$$\frac{\alpha|x-z|}{f(x)}\leq \frac{\alpha|x-z|}{f(z)}\left(1+\frac{\alpha}{f(x)}|x-z|\right)$$
and
$$\frac{\alpha|z-y|}{f(y)}\leq \frac{\alpha|z-y|}{f(z)}\left(1+\frac{\alpha}{f(y)}|z-y|\right),
$$
which follow from the fact that 
$f$ is $\alpha$-Lipschitz and hence 

$$
f(x)+\alpha\,|x-z|\geq f(z)\ 
	\implies 
	\frac{\alpha}{f(z)} \geq \frac{\alpha}{f(x)+\alpha\,|x-z|}=\frac{\alpha/f(x)}{1+\left(\alpha/f(x)\right)\,|x-z|}.
$$  
\end{enumerate}
Hence, $d'$ defines a metric, and this completes the proof.	
\end{proof}

\begin{remark}
	Theorem \ref{Two general metrics} is true in a general metric space setting, which need not be Euclidean space.
\end{remark}
	
We now prove a lemma containing some useful properties of the functions $\delta_D$ and $\eta_D$, which leads us to the fact that $\zeta_D$ and $\zeta'_D$ are metrics. We use $\delta(z)$ and $\eta(z)$ instead of $\delta_D(z)$ and $\eta_D(z)$, respectively, if there is no confusion about the domain from which the point $z$ belongs.
	\begin{lemma}\label{Prop-of-eta-delta}
		Let $x,y$ be any two points in a bounded domain $D$ in $\mathbb{R}^n$, $n\geq 2$. Then the following properties hold:
		\begin{enumerate}
			\item [(i)] If $\delta(x)\leq\delta(y)$, then we have $\eta(x)\leq\eta(y)$,
			\item[(ii)] The function $\delta$ is $1$-Lipschitz,
			\item[(iii)] The function $\eta$ is $d(D)$-Lipschitz, and
			\item [(iv)] $\eta(x)\leq d(D)\,\delta(x)\leq d(D)\,|x-\xi|$ for any point $x\in D$ and $\xi\in \mathbb{R}^n\setminus D$.
		\end{enumerate}		
	\end{lemma}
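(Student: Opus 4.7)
The plan is to establish all four claims by extracting one key size bound and then performing elementary Lipschitz computations. The driving observation I will record first is that for every $z\in D$, the open ball $B(z,\delta(z))$ is contained in $D$, and hence its Euclidean diameter $2\delta(z)$ cannot exceed $d(D)=\mathrm{diam}(D)$; thus $\delta(z)\leq d(D)/2$ for every $z\in D$. This single inequality will drive (i), (iii), and (iv).

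For (i), I will write $\eta(z)=g(\delta(z))$ with $g(t):=t(d(D)-t)$. Since $g'(t)=d(D)-2t\geq 0$ on $[0,d(D)/2]$, the function $g$ is nondecreasing on the interval containing both $\delta(x)$ and $\delta(y)$ by the observation above. Therefore $\delta(x)\leq \delta(y)$ immediately forces $\eta(x)\leq \eta(y)$. Part (ii) is the classical 1-Lipschitz property of the distance-to-boundary function: for fixed $\xi\in\partial D$ the Euclidean triangle inequality gives $\delta(y)\leq |y-\xi|\leq |x-y|+|x-\xi|$; taking the infimum over $\xi\in\partial D$ and symmetrizing yields $|\delta(x)-\delta(y)|\leq |x-y|$.

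For (iii), the first step will be to factor
$$\eta(x)-\eta(y)=\bigl(\delta(x)-\delta(y)\bigr)\bigl(d(D)-\delta(x)-\delta(y)\bigr),$$
so that $|\eta(x)-\eta(y)|=|\delta(x)-\delta(y)|\,\bigl|d(D)-\delta(x)-\delta(y)\bigr|$. By the bound from the first paragraph, $\delta(x)+\delta(y)\leq d(D)$, which makes the second factor nonnegative and at most $d(D)$; combining this with (ii) yields $|\eta(x)-\eta(y)|\leq d(D)\,|x-y|$, i.e.\ the $d(D)$-Lipschitz property of $\eta$.

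Finally, (iv) is immediate from what is already in hand: $\eta(x)=\delta(x)(d(D)-\delta(x))\leq d(D)\,\delta(x)$ because $d(D)-\delta(x)\leq d(D)$, and for any $\xi\in\mathbb{R}^n\setminus D$ the segment from $x\in D$ to $\xi$ must cross $\partial D$, giving $\delta(x)\leq |x-\xi|$. Multiplying by $d(D)$ closes the chain. The only genuinely nonroutine point in the whole argument is spotting the correct factorization of $\eta(x)-\eta(y)$ that exposes the universal bound $\delta(x)+\delta(y)\leq d(D)$; once that is in place, everything reduces to (ii) and the elementary inequality $\delta(z)\leq d(D)/2$.
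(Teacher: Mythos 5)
Your proof is correct and follows essentially the same route as the paper's: you use the same factorization $\eta(x)-\eta(y)=(\delta(x)-\delta(y))(d(D)-\delta(x)-\delta(y))$ that the paper phrases as $|f(t_1)-f(t_2)|=|t_1-t_2||d-(t_1+t_2)|$, together with the monotonicity of $t\mapsto t(d(D)-t)$ on $(0,d(D)/2]$ and the standard $1$-Lipschitz argument for $\delta$. The only difference is that you explicitly derive $\delta(z)\leq d(D)/2$ from $B(z,\delta(z))\subset D$, a bound the paper takes as implicit when it restricts the auxiliary function to the interval $(0,d(D)/2]$.
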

	\begin{proof}
		We fix any two distinct points $x$ and $y$ in $D$ such that $\delta(x)\leq \delta(y)$, without any loss of generality.
		\begin{enumerate}
		\item [(i)]
		 Let $d=d(D)$ and consider the function $f:\left(0,d/2\right]\rightarrow\left(0,d^2/4\right]$ defined by $f(t)=t(d-t)$. Since $t\leq d/2$, $f'(t)\geq 0$ and hence $f$ is increasing. Then the fact $\delta(x)\leq\delta(y)$ concludes the result.
		 
		 \item[(ii)] For any $\xi\in \partial D$, we have $\delta(x)\leq|x-\xi|\leq|x-y|+|y-\xi|$. Taking infimum over all $\xi\in\partial D$, one can obtain $\delta(x)-\delta(y)\leq|x-y|$. Since the role of $x$ and $y$ can be interchanged, we have the required result.
		 		 
		\item [(iii)] It is enough to prove the function $f$, considered in (i), is $d$-Lipschitz. To see this, we take two points $t_1, t_2\in \left(0,d/2\right]$ and calculate
		\begin{align*}
			|f(t_1)-f(t_2)|=|d\,(t_1-t_2)-(t_1^2-t_2^2)|=|t_1-t_2||d-(t_1+t_2)|\leq d\,|t_1-t_2|.
		\end{align*}
		Now, taking $t_1=\delta(x)$ and $t_2=\delta(y)$, we obtain
		\begin{align*}
			|f(\delta(x))-f(&\delta(y))|\leq d\,|\delta(x)-\delta(y)|\leq d\,|x-y|
			\ \implies\ |\eta(x)-\eta(y)|\leq d\,|x-y|,
		\end{align*}
		from (ii). Thus, the function $\eta$ is $d(D)$-Lipschitz.
		
		\item [(iv)] We have $d(D)-\delta(x)\leq d(D)\implies\eta(x)\leq d(D)\delta(x)$. Also, for any $\xi\in \mathbb{R}^n\setminus D$, it follows from the definition of $\delta(x)$ that $\delta(x)\leq |x-\xi|$. 
        \end{enumerate}
        This completes the proof.
	\end{proof}	
		
	\begin{proposition}
		For a domain $D\subsetneq \mathbb{R}^n$, $n\geq 2$, the expressions defined in \eqref{defn of zeta} and \eqref{defn of zeta'} give two metrics on $D$. 
	\end{proposition}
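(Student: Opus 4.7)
The plan is to realize the proposition as a direct application of Theorem \ref{Two general metrics} to the specific pair $\alpha = d(D)$ and $f = \eta_D$, in which case the two displayed expressions of that theorem become, respectively, the definitions \eqref{defn of zeta} and \eqref{defn of zeta'}. I would begin by isolating the unbounded case: when $D$ is unbounded one has $d(D)=\infty$, and the formulas \eqref{defn of zeta}, \eqref{defn of zeta'} are to be read as the limits $d(D)\to\infty$, which, as already observed in the text immediately after the definitions, coincide with the classical distance ratio metrics $j_D$ and $j_D'$. Since those are well-known to be metrics on any proper subdomain of $\mathbb{R}^n$, nothing further is required in this case.

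For the bounded case, the entire argument reduces to verifying the two structural hypotheses of Theorem \ref{Two general metrics} for the function $f=\eta_D$ with constant $\alpha=d(D)>0$. The Lipschitz condition is already supplied by Lemma \ref{Prop-of-eta-delta}(iii). Positivity is equally routine: for every $z\in D$ the open ball $B(z,\delta_D(z))$ is contained in $D$, so its diameter $2\delta_D(z)$ cannot exceed $d(D)$, giving $0<\delta_D(z)\le d(D)/2$ and hence $d(D)-\delta_D(z)\ge d(D)/2>0$. Consequently $\eta_D(z)=\delta_D(z)(d(D)-\delta_D(z))>0$ throughout $D$, and Theorem \ref{Two general metrics} then immediately identifies both $\zeta_D$ and $\zeta_D'$ as metrics on $D$.

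Accordingly, there is essentially no obstacle in this proposition. Symmetry, non-negativity, and vanishing on the diagonal are visibly inherited from the logarithm and the Euclidean distance, while the triangle inequality, which is the only genuinely non-trivial axiom, has already been established in Theorem \ref{Two general metrics} in the abstract positive-Lipschitz setting. The proposition is thus just an assembly of that theorem with Lemma \ref{Prop-of-eta-delta}(iii), and no new computation is needed.
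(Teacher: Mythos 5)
Your proof takes essentially the same route as the paper: in the bounded case apply Theorem \ref{Two general metrics} with $f=\eta_D$ and $\alpha=d(D)$ using Lemma \ref{Prop-of-eta-delta}(iii), and in the unbounded case observe that the expressions reduce to $j_D$ and $j_D'$ (which the paper handles via the same theorem with $f=\delta$ and Lemma \ref{Prop-of-eta-delta}(ii)). Your explicit check that $\eta_D>0$ on $D$ is a welcome small addition the paper leaves implicit, but the argument is the same.
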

	\begin{proof}
		For the case $d(D)<\infty$, the proof follows from Theorem \ref{Two general metrics} and Lemma \ref{Prop-of-eta-delta} (iii) with $f(x)=\eta_D(x)$. Note that, whenever $d(D)=\infty$, $d(D)/\eta(x)$ matches with $1/\delta(x)$ and hence in this case also \eqref{defn-of-j}$\approx$\eqref{defn of zeta} and \eqref{defn-of-j'}$\approx$\eqref{defn of zeta'} define metrics, due to Lemma \ref{Prop-of-eta-delta} (ii) and Theorem \ref{Two general metrics}.
	\end{proof}
	
\begin{proposition}\label{eqvlnc of two metric}
	Let $D$ be a proper subdomain of $\mathbb{R}^n$. Then we have the following relation between the metrics $\zeta_D$ and $\zeta'_D$. For any two points $x,y\in D$, we have
	\begin{align*}
		\zeta'_D(x,y)\leq\zeta_D(x,y)\leq2\,\zeta'_D(x,y).
	\end{align*}
	Both inequalities are sharp.
\end{proposition}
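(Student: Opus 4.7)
The proof proposal is essentially identical in structure to the classical proof of Lemma \ref{j&j' equiv & reln wth k}(i), since the combinatorial shape of $\zeta_D$ and $\zeta'_D$ mirrors that of $j_D$ and $j'_D$. The plan is to reduce everything to a comparison of a geometric mean against a maximum.

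First, by symmetry of both expressions in $x$ and $y$, I would assume without loss of generality that $\eta_D(x)\leq \eta_D(y)$, so that $\eta_D(x)\wedge\eta_D(y)=\eta_D(x)$. Then I introduce the shorthand
\[
A:=1+\frac{d(D)\,|x-y|}{\eta_D(x)},\qquad B:=1+\frac{d(D)\,|x-y|}{\eta_D(y)},
\]
which satisfies $A\geq B\geq 1$. In these terms,
\[
\zeta_D(x,y)=\log A\qquad\text{and}\qquad \zeta'_D(x,y)=\tfrac{1}{2}\log(AB).
\]

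The lower bound $\zeta'_D\leq\zeta_D$ is immediate: since $B\leq A$, one has $AB\leq A^2$, whence $\tfrac{1}{2}\log(AB)\leq \log A$. The upper bound $\zeta_D\leq 2\zeta'_D$ is equally short: since $B\geq 1$, one has $A\leq AB$, and taking logarithms gives $\log A\leq \log(AB)=2\cdot\tfrac{1}{2}\log(AB)$. So both inequalities reduce to one-line monotonicity arguments; there is no real obstacle here.

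For sharpness, I would exhibit (at least asymptotically) extremal configurations. Equality in the left inequality $\zeta'_D=\zeta_D$ is attained precisely when $A=B$, that is, when $\eta_D(x)=\eta_D(y)$; any pair of points equidistant from $\partial D$ in the sense of $\eta_D$ (e.g.\ two symmetrically placed points in a ball) witnesses this. For the right inequality $\zeta_D=2\zeta'_D$, equality would require $B=1$, which forces $|x-y|=0$, so sharpness can only be asymptotic: the ratio $\zeta_D/\zeta'_D$ tends to $2$ when $B/A\to 0$, which occurs on sequences with $x$ near $\partial D$ (so $\eta_D(x)\to 0$) while $\eta_D(y)$ stays bounded away from zero and $|x-y|\to 0$, so that $A$ stays bounded and $B\to 1$. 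Alternatively, since $\zeta_D$ and $\zeta'_D$ reduce to $j_D$ and $j'_D$ in unbounded domains (where $d(D)=\infty$), the sharpness of Lemma \ref{j&j' equiv & reln wth k}(i) can be inherited directly, giving the extremal examples immediately.
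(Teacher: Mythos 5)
Your proof of the two inequalities is correct and is essentially the paper's argument, just dressed up with the shorthand $A,B$: both amount to noting that $1\le B\le A$ and comparing $\log A$ with $\tfrac12\log(AB)$. The left-hand sharpness claim (equality iff $\eta(x)=\eta(y)$, witnessed e.g.\ by $\pm t$ in a ball) also matches the paper's example $\zeta'_{\mathbb D}(-t,t)=\zeta_{\mathbb D}(-t,t)$.

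The primary sharpness argument you give for the right-hand inequality, however, does not work. First, the condition you state, $B/A\to 0$, is not the right one: one has $\zeta_D/\zeta'_D=2/(1+\log B/\log A)$, so what is needed is $\log B/\log A\to 0$, which is strictly weaker (e.g.\ $B=A^{1/2}\to\infty$ gives $B/A\to0$ but a ratio of $4/3$). Second, the concrete configuration you propose --- $\eta(x)\to 0$, $|x-y|\to 0$, $\eta(y)$ bounded away from $0$ --- is impossible: by Lemma~\ref{Prop-of-eta-delta}(iii) the function $\eta$ is $d(D)$-Lipschitz, so $|\eta(x)-\eta(y)|\le d(D)|x-y|\to 0$, forcing $\eta(y)\to 0$ as well. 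Equivalently, under your own normalization $\eta(x)\le\eta(y)$ one has $B\ge 1+d(D)|x-y|/(\eta(x)+d(D)|x-y|)$, so $B\to 1$ with $A$ bounded away from $1$ cannot happen. The correct extremal regime is the opposite one, $A\to\infty$ with $B$ bounded, which is exactly what the paper's example $\zeta'_{\mathbb D}(0,t)/\zeta_{\mathbb D}(0,t)\to 1/2$ as $t\to 1$ achieves (there $A=1+2t/(1-t^2)\to\infty$ while $B=1+2t\to 3$). Your fallback remark --- inherit sharpness from the $j_D,j'_D$ case in an unbounded domain, since $\zeta_D=j_D$ and $\zeta'_D=j'_D$ there --- does salvage the claim, but it relies on the sharpness of $j_D\le 2j'_D$, which Lemma~\ref{j&j' equiv & reln wth k} states without asserting sharpness; you would need to supply that example as well.
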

\begin{proof}
	Without loss of generality, let us assume $x,y$ be any two points in $D$ with $\eta(x)\leq \eta(y)$. Then we have 
	\begin{align*}
		1+&\frac{d(D)|x-y|}{\eta(y)}\leq 1+\frac{d(D)|x-y|}{\eta(x)}\\
		\implies \left(1+\frac{d(D)|x-y|}{\eta(x)}\right)&\left(1+\frac{d(D)|x-y|}{\eta(y)}\right)\leq\left(1+\frac{d(D)|x-y|}{\eta(x)}\right)^2.
	\end{align*}
	Taking logarithm on both sides yields the first inequality. The sharpness follows from the fact that $\zeta'_{\mathbb{D}}(-t,t)=\zeta_{\mathbb{D}}(-t,t)$, for $0<t<1$.
	
	To see the second inequality, we use the fact 
	$$\left(1+\frac{d(D)|x-y|}{\eta(y)}\right)\geq 1,$$
	which implies
	\begin{align*}
		\left(1+\frac{d(D)|x-y|}{\eta(x)}\right)\left(1+\frac{d(D)|x-y|}{\eta(y)}\right)\geq\left(1+\frac{d(D)|x-y|}{\eta(x)}\right).
	\end{align*}
	Finally, taking logarithm on both sides yields the required inequality. Since $\zeta'_{\mathbb{D}}(0,t)/\zeta_{\mathbb{D}}(0,t)\rightarrow 1/2$ as $t\rightarrow 1$, the sharpness follows. 
\end{proof}

\begin{remark}
	The well-known equivalence between $j_D$ and $j_D'$ in Lemma $\ref{j&j' equiv & reln wth k}$ {\normalfont(i)} follows just by taking $d(D)\rightarrow\infty$ in the above proposition.
\end{remark}

\subsection{Examples}		
We now see the formula for the metric $\zeta_D$ in the unit disk, annulus, and the punctured unit disk.		
\begin{example}
	\normalfont
 Consider $D=\mathbb{D}$. Then for any two points $x,y\in\mathbb{D}$, we have
	\begin{align*}
		\zeta_{\mathbb{D}}(x,y)=\log\left(1+\frac{2|x-y|}{(1-|x|^2)\wedge(1-|y|^2)}\right).
	\end{align*}
	In particular, if we take two points, say, $x=t>0$ and $y=-s<0$ with $t\geq s$, then
	\begin{align*}
		\zeta_{\mathbb{D}}(t,-s)=\log\left(1+\frac{2(t+s)}{1-t^2}\right).
	\end{align*}
	Comparing the formulas of $\zeta_{\mathbb{D}}$ and $h_{\mathbb{D}}$ directly is not a trivial	task. Instead, one can use Theorem \ref{equiv of zeta&m} and the fact that $m_{\mathbb{D}}=h_{\mathbb{D}}$ (see Corollary \ref{equiv of zeta&h in unit disk}).
\end{example}	
	
\begin{example}\label{Annls exmpl}
	\normalfont	
 Let us consider the annulus $A_{r,R}:=\left\{z\in\mathbb{C}:0<r<|z|<R\right\}$. It can be checked that for any $z\in A_{r,R}$, we have
\begin{align*}
	\delta_{A_{r,R}}(z)=\begin{cases}
		|z|-r, & \text{if $r<|z|\leq \cfrac{r+R}{2}$},\\
		R-|z|, & \text{if $\cfrac{r+R}{2}\leq|z|<R$}.
	\end{cases}
\end{align*}
Therefore, by direct calculation, we have
\begin{align*}
	\zeta_{A_{r,R}}(x,y)=\log\left(1+\frac{2R|x-y|}{\eta_{A_{r,R}}(x)\wedge\eta_{A_{r,R}}(y)}\right), 
\end{align*}
with
\begin{align*}
	\eta_{A_{r,R}}(z)=\begin{cases}
		(|z|-r)(2R+r-|z|), & \text{if $r<|z|\leq \cfrac{r+R}{2}$},\\
		R^2-|z|^2, & \text{if $\cfrac{r+R}{2}\leq|z|<R$}.	\end{cases}
\end{align*}	
For a partial comparison of the $\zeta$-metric with the hyperbolic metric, one can see \cite[Corollary 3.2]{MaNaSa} and use Theorem \ref{equiv of zeta&m}.
\end{example}
\begin{example}
	\normalfont	
Consider $\mathbb{D}^*:=\mathbb{D}\setminus\left\{0\right\}$. In this case, the metric can be obtained by taking $R\rightarrow1$ and $r\rightarrow0$ in Example \ref{Annls exmpl}. Indeed, we have 
\begin{align*}
	\zeta_{\mathbb{D}^*}(x,y)=\log\left(1+\frac{2|x-y|}{\eta_{\mathbb{D}^*}(x)\wedge\eta_{\mathbb{D}^*}(y)}\right), 
\end{align*}
where
	\begin{align*}
		\eta_{\mathbb{D}^*}(z)=\begin{cases}
			2|z|-|z|^2, & \text{if $0<|z|\leq \cfrac{1}{2}$},\\
			1-|z|^2, & \text{if $\cfrac{1}{2}\leq|z|<1$}.
		\end{cases}
	\end{align*}
\end{example}
One can use Theorem \ref{equiv of zeta&m} and \cite[Example 3.3]{MaNaSa} to see the relation between $\zeta_{\mathbb{D}^*}$ and $h_{\mathbb{D}^*}$ to some extent.


\section{\bf Comparison with other hyperbolic-type metrics}\label{cmprsn with other metrics}	
	Here we study the relation of both the metric $\zeta_D$ and $\zeta'_D$ with some of the closely related hyperbolic-type metrics, viz., the $m_D$-metric, the hyperbolic metric $h_D$, the distance ratio metrics $j_D$ and $j'_D$, and the quasi-hyperbolic metric $k_D$.
    \subsection{Comparison with the $m_D$ and $h_D$-metrics} First we compare $\zeta_D$ and $\zeta_D'$ with the $m_D$-metric in any domain $D$.
	\begin{theorem}\label{equiv of zeta&m}
		For any two points $x,y$ in a domain $D\subsetneq\mathbb{R}^n$, we have
		\begin{align}
			m_D(x,y)\geq\zeta_D(x,y)\geq \zeta'_D(x,y).
		\end{align}
	\end{theorem}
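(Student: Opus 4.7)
The inequality has two parts. The right-hand inequality $\zeta_D(x,y) \geq \zeta'_D(x,y)$ is already established in Proposition \ref{eqvlnc of two metric}, so the only genuine work is the left-hand inequality $m_D(x,y) \geq \zeta_D(x,y)$.

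For that, my plan is to mimic the classical argument that shows $k_D \geq j_D$, but replacing the $1$-Lipschitz function $\delta_D$ by the $d(D)$-Lipschitz function $\eta_D$ granted by Lemma \ref{Prop-of-eta-delta}(iii). Concretely, fix a rectifiable path $\gamma\colon[0,L]\to D$ from $x$ to $y$ parameterised by arc length, so that $L = \ell(\gamma) \geq |x-y|$. The Lipschitz bound gives two one-sided inequalities,
$$\eta_D(\gamma(t)) \;\leq\; \eta_D(x) + d(D)\,t \qquad\text{and}\qquad \eta_D(\gamma(t)) \;\leq\; \eta_D(y) + d(D)\,(L-t),$$
which plug directly into the density of $m_D$.

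Using the first bound,
$$\int_\gamma \frac{d(D)\,|dz|}{\eta_D(z)} \;=\; \int_0^L \frac{d(D)\,dt}{\eta_D(\gamma(t))} \;\geq\; \int_0^L \frac{d(D)\,dt}{\eta_D(x) + d(D)\,t} \;=\; \log\!\left(1+\frac{d(D)\,L}{\eta_D(x)}\right),$$
and the same calculation with the second bound yields the analogous estimate with $\eta_D(y)$ in place of $\eta_D(x)$. Taking the larger of the two lower bounds gives
$$\int_\gamma \frac{d(D)\,|dz|}{\eta_D(z)} \;\geq\; \log\!\left(1+\frac{d(D)\,L}{\eta_D(x)\wedge\eta_D(y)}\right) \;\geq\; \log\!\left(1+\frac{d(D)\,|x-y|}{\eta_D(x)\wedge\eta_D(y)}\right) \;=\; \zeta_D(x,y),$$
where the second step uses $L \geq |x-y|$ and the monotonicity of $\log(1+\cdot)$. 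Taking the infimum over $\gamma \in \Gamma_{xy}$ then produces $m_D(x,y) \geq \zeta_D(x,y)$.

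I do not anticipate any real obstacle. The clean fit comes precisely from the fact that the density $d(D)/\eta_D$ of $m_D$ carries exactly the Lipschitz constant $d(D)$ of $\eta_D$, so the differential inequality $\frac{d}{dt}\eta_D(\gamma(t)) \geq -d(D)$ integrates to give the logarithm appearing in $\zeta_D$. The case $d(D) = \infty$ (unbounded $D$) degenerates to the familiar argument with $\delta_D$ instead of $\eta_D$, since $d(D)/\eta_D$ then reduces to $1/\delta_D$; in particular nothing special needs to be said for unbounded domains beyond noting this limiting identification.
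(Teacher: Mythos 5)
Your proof is correct and essentially the same as the paper's: both reduce to integrating the $m_D$-density along a rectifiable path and using the $d(D)$-Lipschitz bound on $\eta_D$ from Lemma~\ref{Prop-of-eta-delta}(iii) to obtain a lower bound by the logarithm appearing in $\zeta_D$, then invoking Proposition~\ref{eqvlnc of two metric} for the second inequality. The only cosmetic differences are that you parameterize by arc length and bound $|\gamma(t)-x|$ by $t$, and you deduce the $\wedge$ by taking the maximum of the two one-sided estimates, whereas the paper fixes the ordering $\eta(x)\le\eta(y)$ up front and integrates against $|\gamma(t)-x|$ directly.
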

	\begin{proof}
		We may assume, without loss of generality, that $\delta(x)\leq\delta(y)$. By Lemma \ref{Prop-of-eta-delta}(i), we have $\eta(x)\leq\eta(y)$. Let $\gamma:[0,1]\rightarrow D$ be a rectifiable arc with end points $x$ and $y$. Then one can obtain
		\begin{align*}
			\int_{\gamma}m_D(z)|dz|
			=&d(D)\,\int_0^1\frac{|d\gamma(t)|}{\eta(\gamma(t))}\\
			\geq &d(D)\,\int_0^1\frac{|d(\gamma(t)-x)|}{\eta(x)+d(D)|\gamma(t)-x|}\\
			=&\log\left(1+\frac{d(D)|x-y|}{\eta(x)}\right)\\
			=&\zeta_D(x,y).
		\end{align*} 
		Taking infimum over all such arcs $\gamma$, the first inequality follows. The second inequality holds due to Proposition \ref{eqvlnc of two metric}.
	\end{proof}
	
	\begin{remark}
		We show in Theorem \ref{uniform charc} that the reverse of the first inequality is also true in a uniform domain, up to some constant.
	\end{remark}

\begin{corollary}
    Let $D$ be a simply connected planar domain. Then we have $h_D(x,y)\geq \frac{1}{4}\zeta_D(x,y)\geq\frac{1}{4}\zeta_D'(x,y)$.
\end{corollary}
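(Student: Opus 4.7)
The plan is to derive the second inequality directly from Proposition \ref{eqvlnc of two metric} (which gives $\zeta_D'\leq\zeta_D$, so multiplying by $1/4$ yields $\tfrac14\zeta_D\geq\tfrac14\zeta_D'$), and to obtain the first inequality $h_D\geq\tfrac14\zeta_D$ by chaining a pointwise density estimate with the distance inequality $m_D\geq\zeta_D$ from Theorem \ref{equiv of zeta&m}.

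For the pointwise estimate, I would invoke Koebe's one-quarter theorem. Under the paper's normalization $h_\mathbb{D}(z)=2/(1-|z|^2)$, applying Koebe to a Riemann map $g:\mathbb{D}\to D$ with $g(0)=z$ gives $|g'(0)|\leq 4\,\delta(z)$, and the conformal invariance $h_D(z)\,|g'(0)|=h_\mathbb{D}(0)=2$ turns this into
\[
h_D(z)\;\geq\;\frac{1}{2\,\delta(z)}\qquad\text{for every }z\in D.
\]
On the other hand, since $B(z,\delta(z))\subseteq D$, the diameter constraint forces $2\delta(z)\leq d(D)$, so $d(D)-\delta(z)\geq d(D)/2$, and therefore
\[
m_D(z)\;=\;\frac{d(D)}{\delta(z)(d(D)-\delta(z))}\;\leq\;\frac{2}{\delta(z)}.
\]
Combining the two bounds yields the pointwise inequality $h_D(z)\geq\tfrac14\,m_D(z)$. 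Integrating this along any rectifiable $\gamma\in\Gamma_{xy}$ gives $\int_\gamma h_D(z)\,|dz|\geq \tfrac14 \int_\gamma m_D(z)\,|dz|\geq \tfrac14\, m_D(x,y)$, and taking the infimum over such $\gamma$ produces $h_D(x,y)\geq\tfrac14\, m_D(x,y)$. An appeal to Theorem \ref{equiv of zeta&m} then finishes the chain: $h_D(x,y)\geq \tfrac14 m_D(x,y)\geq \tfrac14 \zeta_D(x,y)$.

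The only place requiring care is matching the Koebe constant to the paper's normalization: the factor $1/2$ in $h_D(z)\geq 1/(2\delta(z))$ (rather than $1/4$, which would arise from the alternative convention $\lambda_\mathbb{D}(z)=1/(1-|z|^2)$) is precisely what allows the clean coefficient $\tfrac14$ in the corollary to appear instead of $\tfrac18$. Once this normalization is fixed, the argument is a short chain, with the pointwise density comparison $h_D\geq\tfrac14 m_D$ being the only substantive step beyond invoking earlier results.
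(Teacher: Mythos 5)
Your proof is correct and follows essentially the same route as the paper's. The paper's one-line proof cites \cite[(8.4)]{BM} (the Koebe-derived density estimate, which in this paper's normalization reads $h_D(z)\geq 1/(2\delta(z))$) together with \cite[Theorem 3.5]{MaNaSa} (the comparison $k_D\leq m_D\leq 2k_D$, i.e.\ $m_D(z)\leq 2/\delta(z)$ at the density level), and then applies Theorem \ref{equiv of zeta&m} and Proposition \ref{eqvlnc of two metric}. You re-derive both cited facts from scratch --- Koebe's quarter theorem with the normalization check, and the elementary diameter bound $\delta(z)\leq d(D)/2$ --- and combine them directly at the density level, which is exactly the content of those references; so the argument is the same chain, merely unpacked. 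Your attention to the normalization of $h_{\mathbb D}$ (density $2$ at the origin, not $1$) is precisely the point on which the constant $1/4$ depends, and you handle it correctly. One minor presentational remark: your step "$2\delta(z)\leq d(D)$" tacitly assumes $D$ bounded; for an unbounded simply connected $D$ the bound $m_D(z)=1/\delta(z)\leq 2/\delta(z)$ is immediate, so the conclusion still holds, but it would be worth one clause to cover that case.
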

\begin{proof}
    The proof follows from \cite[(8.4)]{BM} and \cite[Theorem 3.5]{MaNaSa}.
\end{proof}
	\begin{corollary}
			For all $x,y\in B$, we have $h_B(x,y)\geq\zeta_B(x,y)\geq\zeta_B'(x,y)$.
	\end{corollary}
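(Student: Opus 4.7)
The plan is to obtain this as a direct consequence of Theorem \ref{equiv of zeta&m} together with a basic identification of $m_D$ in a ball. Specifically, I would first apply Theorem \ref{equiv of zeta&m} with $D=B$, which yields
$$
m_B(x,y) \geq \zeta_B(x,y) \geq \zeta_B'(x,y)
$$
for all $x,y \in B$. The second inequality here is already in the desired form, so no further work is needed on the right-hand side.

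Next, I would invoke the fact recorded in Section \ref{preli} (originating from \cite{MaNaSa}) that the metric $m_D$ agrees with the hyperbolic metric on balls and half-spaces. In particular, for a ball $B = B(z_0,r)$, a direct computation using the formulas in \eqref{mDdefn} and \eqref{hyperbolic metric} shows that the density $m_B(z) = d(B)/[\delta(z)(d(B)-\delta(z))] = 2r/(r^2 - |z-z_0|^2) = h_B(z)$, since $d(B) = 2r$ and $\delta(z) = r - |z-z_0|$, so that $\delta(z)(d(B)-\delta(z)) = (r-|z-z_0|)(r+|z-z_0|) = r^2-|z-z_0|^2$. Hence the densities coincide, which gives $m_B(x,y) = h_B(x,y)$ pointwise.

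Substituting this identity into the inequality from Theorem \ref{equiv of zeta&m} produces $h_B(x,y) \geq \zeta_B(x,y) \geq \zeta_B'(x,y)$, which is exactly the claim. There is no serious obstacle here; the only thing to be careful about is ensuring the density identification $m_B \equiv h_B$ is invoked cleanly, but this has already been established in the cited work and is transparent from the formulas, so the corollary reduces to a one-line application of the preceding theorem.
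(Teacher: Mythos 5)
Your proof is correct and follows exactly the paper's own argument: apply Theorem \ref{equiv of zeta&m} with $D=B$ and then identify $m_B$ with $h_B$ via the density computation $m_B(z)=d(B)/[\delta(z)(d(B)-\delta(z))]=2r/(r^2-|z-z_0|^2)=h_B(z)$. The paper simply states this identification without writing out the algebra, so your version is just a slightly more explicit rendering of the same one-line proof.
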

    \begin{proof}
        Since $m_D$ matches with $h_D$ on any Euclidean ball $B$, the inequalities follow.
    \end{proof}

    \begin{corollary}\label{equiv of zeta&h in unit disk}
		For any two points $x,y\in \mathbb{B}$, we have $\zeta_{\mathbb{B}}(x,y)\leq h_{\mathbb{B}}(x,y)\leq2\,\zeta_{\mathbb{B}}(x,y)$.
	\end{corollary}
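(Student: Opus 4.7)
The lower bound $\zeta_{\mathbb{B}}(x,y)\leq h_{\mathbb{B}}(x,y)$ is immediate from the preceding corollary applied to $B=\mathbb{B}$, so the entire task reduces to proving the upper estimate $h_{\mathbb{B}}(x,y)\leq 2\,\zeta_{\mathbb{B}}(x,y)$.

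The plan is to reduce both sides to their explicit closed forms on $\mathbb{B}$. For $\zeta_{\mathbb{B}}$, I would first observe that $d(\mathbb{B})=2$ and
\[
\eta_{\mathbb{B}}(z)=\delta_{\mathbb{B}}(z)\bigl(2-\delta_{\mathbb{B}}(z)\bigr)=(1-|z|)(1+|z|)=1-|z|^{2},
\]
so that the definition \eqref{defn of zeta} yields
\[
\zeta_{\mathbb{B}}(x,y)=\log\!\left(1+\frac{2\,|x-y|}{(1-|x|^{2})\wedge(1-|y|^{2})}\right).
\]
For the hyperbolic side, I would invoke the classical Poincar\'e ball identity
\[
\sinh^{2}\!\left(\tfrac{1}{2}h_{\mathbb{B}}(x,y)\right)=\frac{|x-y|^{2}}{(1-|x|^{2})(1-|y|^{2})}
\]
(see, for instance, \cite{HaKlVu}), and set $A=|x-y|^{2}/[(1-|x|^{2})(1-|y|^{2})]$ so that $e^{h_{\mathbb{B}}(x,y)/2}=\sqrt{1+A}+\sqrt{A}$.

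The core of the argument is then the elementary inequality $\sqrt{1+A}\leq 1+\sqrt{A}$, valid for all $A\geq 0$, which gives $e^{h_{\mathbb{B}}(x,y)/2}\leq 1+2\sqrt{A}$. Assuming without loss of generality that $|x|\geq |y|$, the minimum in the denominator of $\zeta_{\mathbb{B}}$ equals $1-|x|^{2}$, and hence $\sqrt{A}\leq |x-y|/(1-|x|^{2})$. Combining these bounds produces
\[
e^{h_{\mathbb{B}}(x,y)/2}\leq 1+\frac{2\,|x-y|}{1-|x|^{2}}=e^{\zeta_{\mathbb{B}}(x,y)},
\]
and taking logarithms yields $h_{\mathbb{B}}(x,y)\leq 2\,\zeta_{\mathbb{B}}(x,y)$, as required.

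I do not foresee a substantial obstacle here: once the explicit formulas are in place, the proof is a short algebraic manipulation. The only steps demanding care are the correct identification $\eta_{\mathbb{B}}(z)=1-|z|^{2}$ (using $d(\mathbb{B})=2$ rather than $1$) and the symmetry reduction that removes the minimum in the denominator of $\zeta_{\mathbb{B}}$; both are routine.
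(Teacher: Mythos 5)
Your proof is correct, but it takes a genuinely different route from the paper. The paper's proof is a one‑line chain of citations: it invokes the fact that $m_{\mathbb{B}}=h_{\mathbb{B}}$ (together with Theorem \ref{equiv of zeta&m}) for the lower bound, and for the upper bound it combines the standard estimate $h_{\mathbb{B}}\le 2\,j_{\mathbb{B}}$ from \cite[Lemma 4.9]{HaKlVu} with the inequality $j_{\mathbb{B}}\le \zeta_{\mathbb{B}}$ from Theorem \ref{equiv of zeta&j}. You instead compute $\zeta_{\mathbb{B}}$ in closed form, invoke the Poincar\'e identity $\sinh^{2}\!\bigl(\tfrac12 h_{\mathbb{B}}(x,y)\bigr)=|x-y|^{2}/[(1-|x|^{2})(1-|y|^{2})]$, and finish with the elementary inequality $\sqrt{1+A}\le 1+\sqrt{A}$; your symmetry reduction $|x|\ge|y|$ then gives $\sqrt{A}\le|x-y|/(1-|x|^{2})$, which is what is needed. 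Every step in your argument checks out: from $\sinh(h/2)=\sqrt{A}$ one indeed gets $e^{h/2}=\sqrt{1+A}+\sqrt{A}\le 1+2\sqrt{A}\le e^{\zeta_{\mathbb{B}}(x,y)}$. Your approach is more self‑contained — it does not rely on the $j$--$h$ comparison lemma from HaKlVu — and it makes transparent exactly where the factor $2$ originates (from the Bernoulli‑type inequality $\sqrt{1+A}\le 1+\sqrt{A}$, which becomes tight as $A\to 0$). The paper's approach is shorter and situates the corollary within the surrounding web of known inequalities, at the cost of being a pure citation chain.
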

	\begin{proof}
		The proof follows from the above remark, \cite[Lemma 4.9]{HaKlVu}, and Theorem \ref{equiv of zeta&j}.
	\end{proof}
Though the following fact is well-known, due to \cite[Lemma 2.1]{GP}, we derive it as a corollary of Theorem \ref{equiv of zeta&m}.  		
	\begin{corollary}\label{j-leq-k}
		For any two points $x,y$ in $D\subsetneq\mathbb{R}^n$, $n\geq 2$, we have $k_D(x,y)\geq j_D(x,y)\geq j_D'(x,y)$.
	\end{corollary}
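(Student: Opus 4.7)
The plan is to mirror the proof of Theorem \ref{equiv of zeta&m} in a lower-complexity setting, replacing the pair $(d(D),\eta_D)$ with $(1,\delta_D)$. The structural reason this works is that the proof of Theorem \ref{equiv of zeta&m} only uses three ingredients about $\eta_D$: positivity, the Lipschitz estimate $\eta(\gamma(t))\le \eta(x)+d(D)\,|\gamma(t)-x|$, and the computation of the resulting logarithmic integral. The function $\delta_D$ satisfies the analogous properties with constant $1$ by Lemma \ref{Prop-of-eta-delta}(ii), so the same chain of estimates will go through verbatim.

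Concretely, I would fix $x,y\in D$ with $\delta(x)\le \delta(y)$ (without loss of generality) and pick an arbitrary rectifiable arc $\gamma:[0,1]\to D$ joining $x$ to $y$. The $1$-Lipschitz property of $\delta$ gives $\delta(\gamma(t))\le \delta(x)+|\gamma(t)-x|$, so that
\begin{align*}
\int_{\gamma}\frac{|dz|}{\delta(z)}
\;\ge\; \int_{0}^{1}\frac{|d(\gamma(t)-x)|}{\delta(x)+|\gamma(t)-x|}
\;\ge\; \log\!\left(1+\frac{|x-y|}{\delta(x)}\right)
\;=\; j_D(x,y).
\end{align*}
Taking the infimum over all such $\gamma$ yields $k_D(x,y)\ge j_D(x,y)$, which is the first inequality.

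For the second inequality $j_D(x,y)\ge j'_D(x,y)$, I would invoke Lemma \ref{j&j' equiv & reln wth k}(i), or equivalently observe that the argument used in Proposition \ref{eqvlnc of two metric} to compare $\zeta_D$ with $\zeta'_D$ produces the same comparison between $j_D$ and $j'_D$ upon letting $d(D)\to\infty$; indeed, $\zeta_D\to j_D$ and $\zeta'_D\to j'_D$ pointwise in this limit, so the inequality is preserved.

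I do not expect any real obstacle: the only nontrivial input is the Lipschitz property of $\delta$, which is already in hand, and the whole point of presenting this classical fact here is to highlight that it is the ``$d(D)=\infty$'' shadow of Theorem \ref{equiv of zeta&m}. If one wants to be even more direct, one can first dispatch the unbounded case by observing that $m_D=k_D$, $\zeta_D=j_D$, and $\zeta'_D=j'_D$ when $d(D)=\infty$, and then handle the bounded case by the integral estimate above; the argument is identical.
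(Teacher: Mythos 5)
Your proposal is correct and is essentially the same as the paper's argument: the paper dispatches this corollary with the one-liner ``take $d(D)\to\infty$ in Theorem~\ref{equiv of zeta&m},'' which is shorthand for exactly the substitution you perform, replacing the pair $(d(D),\eta_D)$ by $(1,\delta_D)$ and rerunning the integral estimate using the $1$-Lipschitz property of $\delta$ from Lemma~\ref{Prop-of-eta-delta}(ii). Your version is actually a bit more careful than the paper's, since $d(D)$ is a fixed number for a given domain and cannot literally be sent to infinity; spelling out the parallel computation (and noting that the unbounded case is immediate because $m_D=k_D$, $\zeta_D=j_D$, $\zeta'_D=j'_D$ there) is the right way to make the paper's heuristic rigorous.
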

	\begin{proof}
		The result follows by taking $d(D)\rightarrow\infty$ in Theorem \ref{equiv of zeta&m}.
	\end{proof}
    
	A short proof of the following fact can be obtained easily by Theorem \ref{equiv of zeta&m}. We proved this result initially in \cite[Corollary 4.3]{MaNaSa} using a lengthy technique.
	\begin{corollary}
		For any two point $x,y$ in $D\subsetneq\mathbb{R}^n$, we have
		\begin{align}\label{m and eta/eta inequality}
			m_D(x,y)\geq\left|\log\frac{\eta(y)}{\eta(x)}\right|.
		\end{align}
	\end{corollary}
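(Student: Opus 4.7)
The plan is to derive the inequality directly from Theorem \ref{equiv of zeta&m} together with the Lipschitz behaviour of $\eta$ recorded in Lemma \ref{Prop-of-eta-delta}(iii). Since the right-hand side is symmetric in $x$ and $y$, I would first assume without loss of generality that $\eta(x)\leq\eta(y)$, so that the absolute value disappears and the target becomes $m_D(x,y)\geq \log(\eta(y)/\eta(x))$.

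Next, I would invoke Lemma \ref{Prop-of-eta-delta}(iii) to write $\eta(y)-\eta(x)\leq d(D)\,|x-y|$, and divide through by $\eta(x)$ to obtain
\begin{align*}
\frac{\eta(y)}{\eta(x)} \leq 1+\frac{d(D)\,|x-y|}{\eta(x)} = 1+\frac{d(D)\,|x-y|}{\eta(x)\wedge \eta(y)}.
\end{align*}
Taking logarithms, the right-hand side is exactly $\zeta_D(x,y)$ from \eqref{defn of zeta}, so
\begin{align*}
\log\frac{\eta(y)}{\eta(x)} \leq \zeta_D(x,y).
\end{align*}

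Finally, Theorem \ref{equiv of zeta&m} supplies $\zeta_D(x,y)\leq m_D(x,y)$, which chains with the previous line to yield the claim. There is essentially no obstacle here: the only subtlety is recognizing that the $d(D)$-Lipschitz constant of $\eta$ is the precise factor needed to convert the quotient $\eta(y)/\eta(x)$ into a form involving $d(D)\,|x-y|/\eta(x)$, which is exactly what appears inside the definition of $\zeta_D$. This explains why the proof is so much shorter than the one via \cite[Corollary 4.3]{MaNaSa}: the metric $\zeta_D$ isolates the structural content that was buried in the earlier argument.
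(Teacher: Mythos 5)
Your proposal is correct and is essentially the paper's proof, just written in the reverse direction: the paper starts from $m_D \geq \zeta_D$ (Theorem \ref{equiv of zeta&m}), expands $\zeta_D$, and then applies the $d(D)$-Lipschitz bound on $\eta$, whereas you first bound $\log(\eta(y)/\eta(x))$ by $\zeta_D$ using the Lipschitz property and then chain with $\zeta_D\leq m_D$. Both rely on the same two ingredients; one small point worth noting is that the paper attributes the Lipschitz step to Lemma \ref{Prop-of-eta-delta}(ii), but the correct reference is part (iii), as you cite.
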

	\begin{proof}
		If $\eta(x)\leq \eta(y)$, then from Theorem \ref{equiv of zeta&m} we have
		\begin{align*}
			m_D(x,y)\geq \log\left(1+\frac{d(D)|x-y|}{\eta(x)}\right)\geq\log\left(\frac{\eta(x)+d(D)|x-y|}{\eta(x)}\right)\geq\log\frac{\eta(y)}{\eta(x)},
		\end{align*} 
		where the last step follows from Lemma \ref{Prop-of-eta-delta} (ii). Similarly, if $\eta(y)\leq \eta(x)$, we have $m_D(x,y)\geq\log(\eta(x)/\eta(y))$. Hence the result follows.
	\end{proof}	
   
 \subsection{Comparison with the $j_D$ and $j'_D$-metrics} 
 We have seen that $j_D$ and $\zeta_D$ agree in an unbounded domain. The following theorem gives their equivalence in any bounded domain.	
	\begin{theorem}\label{equiv of zeta&j}
		For any two points $x,y$ in a bounded domain $D$, we have
		\begin{enumerate}
			\item [(i)] $j_D(x,y)\leq\zeta_D(x,y)\leq 2j_D(x,y)$, and
		    \item [(ii)] $j'_D(x,y)\leq\zeta'_D(x,y)\leq 2\,j'_D(x,y)$,
	   \end{enumerate}
	   where all the constants are the best possible.
	\end{theorem}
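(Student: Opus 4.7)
The plan is to reduce both inequalities to a single elementary observation that, in any bounded domain, $\delta(z) \leq d(D)/2$ for every $z \in D$. This holds because the open ball $B(z,\delta(z))$ is contained in $D$, so its diameter $2\delta(z)$ cannot exceed $d(D)$. Consequently $d(D)-\delta(z) \in [d(D)/2,\,d(D)]$, which translates into the two-sided pointwise estimate
\begin{align*}
\frac{1}{\delta(z)} \,\leq\, \frac{d(D)}{\eta(z)} \,=\, \frac{d(D)}{\delta(z)(d(D)-\delta(z))} \,\leq\, \frac{2}{\delta(z)}, \qquad z \in D.
\end{align*}

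For (i), I assume without loss of generality $\delta(x) \leq \delta(y)$, so that Lemma \ref{Prop-of-eta-delta}(i) gives $\eta(x) \leq \eta(y)$ and the minima in the definitions of $j_D$ and $\zeta_D$ are attained at $x$. The left inequality $j_D \leq \zeta_D$ is then immediate from the lower bound above. For the right inequality, the upper bound combined with the elementary fact $1+2t \leq (1+t)^2$ for $t \geq 0$ gives
\begin{align*}
1 + \frac{d(D)|x-y|}{\eta(x)} \,\leq\, 1 + \frac{2|x-y|}{\delta(x)} \,\leq\, \left(1 + \frac{|x-y|}{\delta(x)}\right)^{2},
\end{align*}
and taking logarithms yields $\zeta_D(x,y) \leq 2\,j_D(x,y)$. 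The proof of (ii) is structurally identical: the same two-sided pointwise estimate is applied separately at $z=x$ and $z=y$ to each factor in the defining products of $\zeta'_D$ and $j'_D$, the resulting inequalities are multiplied, and then one takes the logarithm.

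For the sharpness of all four constants, I would work in $\mathbb{D}$. Choosing $x=-t$, $y=t$ with $t \to 0^{+}$ makes $\delta(x)=\delta(y)=1-t$ tend to $d(\mathbb{D})/2=1$, and a short Taylor expansion of the explicit formulas gives $\zeta_{\mathbb{D}}/j_{\mathbb{D}} \to 2$; since $\delta(x)=\delta(y)$ in this example, the same points also yield $\zeta'_{\mathbb{D}}/j'_{\mathbb{D}} \to 2$. Conversely, letting $x,y$ approach a common boundary point drives $\delta(x),\delta(y) \to 0$, so $d(D)/(d(D)-\delta(z)) \to 1$, and both ratios $\zeta_D/j_D$ and $\zeta'_D/j'_D$ converge to $1$. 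I do not anticipate a serious obstacle: once the geometric bound $\delta \leq d(D)/2$ is in hand, everything else is scalar manipulation of $\log(1+\cdot)$ terms.
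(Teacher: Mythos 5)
Your proof is correct and follows essentially the same route as the paper's: the key geometric fact $\delta(z)\le d(D)/2$ gives the two-sided bound $1/\delta(z)\le d(D)/\eta(z)\le 2/\delta(z)$, the lower bound yields the first inequality in each part, and the upper bound combined with Bernoulli's inequality (your $1+2t\le(1+t)^2$ is exactly the exponentiated form of the paper's $\log(1+2t)\le 2\log(1+t)$) yields the second. Your sharpness examples in $\mathbb{D}$ ($(-t,t)$ with $t\to 0$ for the factor $2$; points near the boundary for the factor $1$) are equivalent to the paper's test points.
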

	\begin{proof}
		Let us fix two points $x$ and $y$ in $D$. To prove the inequalities, it suffices to consider the case $\delta(x)\leq\delta(y)$. Then from Lemma \ref{Prop-of-eta-delta} (i), we have $\eta(x)\leq\eta(y)$.
        \begin{enumerate}
	\item[(i)] 
		Upon our assumption, we have
		\begin{align*}
			j_D(x,y)=\log\left(1+\frac{|x-y|}{\delta(x)}\right).
		\end{align*}
		First note that, for any point $z\in D$, the facts $\delta(z)\leq d(D)/2$ and $d(D)-\delta(z)<d(D)$ provide us
		\begin{align}\label{mdelta}
			1<\frac{d(D)}{d(D)-\delta(z)}\leq 2 \implies \frac{1}{\delta(z)}<\frac{d(D)}{\eta(z)}\leq\frac{2}{\delta(z)}.
		\end{align}
		The first half of the inequality follows from \eqref{mdelta} and the increasing property of the logarithm. Indeed, we have
		\begin{align*}
			j_D(x,y)=\log\left(1+\frac{|x-y|}{\delta(x)}\right)\leq\log\left(1+\frac{d\,|x-y|}{\eta(x)}\right)=\zeta_D(x,y).
		\end{align*}
		The equality holds if any one of the points is very close to the boundary.
		
		For the second part of the inequality, we employ the well-known Bernoulli's inequality, the increasing property of the logarithm, and \eqref{mdelta} to get
		\begin{align*}
			j_D(x,y)=\log\left(1+\frac{|x-y|}{\delta(x)}\right)
			&\geq \frac{1}{2}\log\left(1+\frac{2|x-y|}{\delta(x)}\right)\\
			&\geq\frac{1}{2}\log\left(1+\frac{d\,|x-y|}{\eta(x)}\right)=\frac{1}{2}\zeta_D(x,y).
		\end{align*}
       We consider $D=\mathbb{D}$ and choose one point as the origin and another as a positive real number $t$ to prove the sharpness. Then one can easily see that
		$$j_{\mathbb{D}}(0,t)=\log\left(\frac{1}{1-t}\right),\ \text{  and   }\ \zeta_{\mathbb{D}}(0,t)=\log\left(1+\frac{2t}{1-t^2}\right).$$
		Finally, the limit of $j_{\mathbb{D}}(0,t)/\zeta_{\mathbb{D}}(0,t)$ is $1/2$ as $t\rightarrow0$ and the sharpness follows.	
        
	\item[(ii)] For the first part of the inequality, it follows from \eqref{mdelta} that
	\begin{align*}
		\frac{1}{\delta(x)}\leq\frac{d(D)}{\eta(x)}
		\implies 1+\frac{|x-y|}{\delta(x)}\leq 1+\frac{d(D)\,|x-y|}{\eta(x)},
	\end{align*} 	
	which is also true when we replace $x$ by $y$. Applying logarithm to both the inequalities on both sides, adding them, and multiplying by $1/2$ yields the result. The sharpness can be seen from the fact that $j'_{\mathbb{D}}(0,t)/\zeta'_{\mathbb{D}}(0,t)\rightarrow1$ as $t\rightarrow1$.
	
	For the last inequality, we use the second part of \eqref{mdelta} to get
	\begin{align*}
		\zeta'_D(x,y)
		&=\frac{1}{2}\log\left(1+\frac{d(D)\,|x-y|}{\eta(x)}\right)+\frac{1}{2}\log\left(1+\frac{d(D)\,|x-y|}{\eta(y)}\right)\\
		&\leq \frac{1}{2}\log\left(1+\frac{2\,|x-y|}{\delta(x)}\right)+\frac{1}{2}\log\left(1+\frac{2\,|x-y|}{\delta(x)}\right)\\
		&\leq \log\left(1+\frac{|x-y|}{\delta(x)}\right)+\log\left(1+\frac{|x-y|}{\delta(x)}\right)\\
		&=2\,j'_D(x,y),
	\end{align*}
	where the second inequality follows from Bernoulli's inequality. The constant $2$ cannot be improved because of the fact that $\zeta'_{\mathbb{D}}(-t,t)/j'_{\mathbb{D}}(-t,t)\rightarrow2$ as $t\rightarrow 0$.
    \end{enumerate}
    This completes the proof.
	\end{proof}
	
	\begin{corollary}\label{equiv of j' and zeta and v-v}
		For any two points $x,y$ in a bounded domain $D$ in $\mathbb{R}^n$, we have
		\begin{enumerate}
			\item[(i)] $j'_D(x,y)\leq\zeta_D(x,y)\leq 4\,j'_D(x,y)$, and 
		
			\item[(ii)] $\cfrac{1}{2}\,j_D(x,y)\leq \zeta'_D(x,y)\leq 2\,j_D(x,y)$.
		\end{enumerate}
		The first inequality in (i) is sharp, and both the constants in (ii) are the best possible.
	\end{corollary}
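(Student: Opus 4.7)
The plan is to obtain both two-sided inequalities simply by chaining Proposition~\ref{eqvlnc of two metric}, Lemma~\ref{j&j' equiv & reln wth k}(i), and the two parts of Theorem~\ref{equiv of zeta&j}; no new estimate is really needed, because each displayed bound already sits within a factor of $2$ of the one we want. After that, the only non-routine part is sharpness, for which I would exhibit concrete one-parameter families in $\mathbb{D}$.

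For part (i), the lower bound $j'_D\le\zeta_D$ follows at once from $j'_D\le j_D$ (Lemma~\ref{j&j' equiv & reln wth k}(i)) combined with $j_D\le\zeta_D$ (Theorem~\ref{equiv of zeta&j}(i)). The upper bound $\zeta_D\le 4j'_D$ follows from $\zeta_D\le 2j_D$ (Theorem~\ref{equiv of zeta&j}(i)) followed by $j_D\le 2j'_D$ (Lemma~\ref{j&j' equiv & reln wth k}(i)). For part (ii), the lower bound $\tfrac12 j_D\le\zeta'_D$ comes from $\tfrac12 j_D\le j'_D$ (Lemma~\ref{j&j' equiv & reln wth k}(i)) and $j'_D\le\zeta'_D$ (Theorem~\ref{equiv of zeta&j}(ii)); the upper bound $\zeta'_D\le 2j_D$ comes from $\zeta'_D\le\zeta_D$ (Proposition~\ref{eqvlnc of two metric}) and $\zeta_D\le 2j_D$ (Theorem~\ref{equiv of zeta&j}(i)). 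This is the entire algebraic content of the corollary, and writing it out would be three or four lines.

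The delicate point, and the only place I expect any thought to be required, is verifying that the four constants marked sharp are actually attained in the limit. For the first inequality in (i), I would use $D=\mathbb{D}$ and pick two points symmetric with respect to the origin but pushed toward the boundary, namely $x=(1-\epsilon,\epsilon')$ and $y=(1-\epsilon,-\epsilon')$ with $\epsilon,\epsilon'\to 0$; then $\delta(x)=\delta(y)\sim\epsilon$ forces $j'_D(x,y)=j_D(x,y)$, while $\eta(x)\sim 2\epsilon$ so that $\zeta_D(x,y)\sim\log(1+\epsilon'/\epsilon)\sim j'_D(x,y)$, giving ratio tending to $1$.

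For part (ii) the authors have essentially already recorded the required asymptotics in the proof of Theorem~\ref{equiv of zeta&j}. For the lower constant $\tfrac12$, I would take $x=0$, $y=t\to 1^-$ in $\mathbb{D}$; a direct computation gives $\zeta'_{\mathbb{D}}(0,t)/j'_{\mathbb{D}}(0,t)\to 1$ (already noted) while $j'_{\mathbb{D}}(0,t)/j_{\mathbb{D}}(0,t)\to \tfrac12$ since $j_{\mathbb{D}}(0,t)\sim\log\frac1{1-t}$ and $j'_{\mathbb{D}}(0,t)\sim\tfrac12\log\frac1{1-t}$, so the product of ratios tends to $\tfrac12$. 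For the upper constant $2$, I would take $x=-t$, $y=t$ in $\mathbb{D}$: symmetry gives $j'_{\mathbb{D}}(-t,t)=j_{\mathbb{D}}(-t,t)$, and Theorem~\ref{equiv of zeta&j}(ii) already records $\zeta'_{\mathbb{D}}(-t,t)/j'_{\mathbb{D}}(-t,t)\to 2$ as $t\to 0$, so $\zeta'_{\mathbb{D}}(-t,t)/j_{\mathbb{D}}(-t,t)\to 2$ as well. No genuine obstacle arises anywhere; the statement is a bookkeeping consequence of the tools now in place.
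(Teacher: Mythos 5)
Your derivation of the four inequalities matches the paper's: everything follows by chaining Theorem~\ref{equiv of zeta&j} with Lemma~\ref{j&j' equiv & reln wth k}(i), and invoking Proposition~\ref{eqvlnc of two metric} for the upper bound in (ii) is an equivalent cosmetic variant. Your sharpness arguments for (ii) also use the same configurations as the paper, namely $(0,t)$ as $t\to 1^-$ and $(\pm t)$ as $t\to 0^+$. The genuine divergence is the sharpness of the lower bound in (i). The paper asserts $\zeta_{\mathbb{D}}(0,t)/j'_{\mathbb{D}}(0,t)\to 1$ as $t\to 1$, but a direct check gives $\zeta_{\mathbb{D}}(0,t)=\log\bigl(1+\tfrac{2t}{1-t^{2}}\bigr)\sim\log\tfrac{1}{1-t}$ while $j'_{\mathbb{D}}(0,t)=\tfrac12\log\tfrac{1+t}{1-t}\sim\tfrac12\log\tfrac{1}{1-t}$, so this ratio actually tends to $2$, not $1$; the paper's stated example does not establish the claimed sharpness. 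Your symmetric boundary pair $x=(1-\epsilon,\epsilon')$, $y=(1-\epsilon,-\epsilon')$ repairs this: with $\delta(x)=\delta(y)$ one has $j_{D}=j'_{D}$ exactly, and as $\delta(x)\to 0$ one gets $\eta(x)\sim d(D)\,\delta(x)$, forcing $\zeta_{D}\sim j_{D}=j'_{D}$ and hence ratio $1$. (One small slip: with $d(\mathbb{D})=2$ and $|x-y|=2\epsilon'$ the asymptotics read $\log(1+2\epsilon'/\epsilon)$ for both quantities, not $\log(1+\epsilon'/\epsilon)$ for $\zeta_{D}$, but the identical constants are precisely what make the ratio converge to $1$, so the conclusion stands.) In short: same route for the estimates and for (ii), but a different and in fact correct sharpness witness for (i), where the paper's own witness fails.
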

	\begin{proof}
		The proof is an immediate consequence of Theorem \ref{equiv of zeta&j} and the inequality (i) of Lemma \ref{j&j' equiv & reln wth k}. The limit of $\zeta_{\mathbb{D}}(0,t)/j'_{\mathbb{D}}(0,t)\rightarrow 1$, as $t\rightarrow 1$, demonstrates the sharpness for the first inequality in (i). The sharpness of the inequalities in (ii) follows from the fact that $\lim_{t\rightarrow 1}\zeta'_{\mathbb{D}}(0,t)/j_{\mathbb{D}}(0,t)=1/2$ and $\lim_{t\rightarrow 0}\zeta'_{\mathbb{D}}(-t,t)/j_{\mathbb{D}}(-t,t)=2$, respectively.
	\end{proof}
	Now we show that the second inequality of Corollary \ref{equiv of j' and zeta and v-v} (i) can be improved with a sharp constant 2.
    \begin{proposition}\label{sharpness in zeta-j' inequality}
        For any two points $x,y$ in a bounded domain $D$, we have
         $$\zeta_D(x,y)\leq 2\,j'_D(x,y).$$
    \end{proposition}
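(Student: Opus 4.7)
The plan is to exponentiate the desired inequality and reduce it to an elementary algebraic estimate that follows from the standard bound $\delta_D(z) \le d(D)/2$. By symmetry I may assume $\delta(x) \le \delta(y)$; then Lemma~\ref{Prop-of-eta-delta}(i) gives $\eta(x) \le \eta(y)$, so $\eta(x) \wedge \eta(y) = \eta(x) = \delta(x)(d(D) - \delta(x))$. After exponentiating both sides, the inequality $\zeta_D(x,y) \le 2\,j'_D(x,y)$ is equivalent to
\begin{equation*}
1 + \frac{d(D)\,|x-y|}{\delta(x)(d(D) - \delta(x))} \;\le\; \left(1 + \frac{|x-y|}{\delta(x)}\right)\left(1 + \frac{|x-y|}{\delta(y)}\right).
\end{equation*}

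Writing $a = \delta(x)$, $b = \delta(y)$, $r = |x-y|$, and $d = d(D)$, a routine manipulation (expand the right-hand side, cancel the common $1$, and clear denominators) reduces the above to the single algebraic inequality $ab \le (a+r)(d-a)$. The essential geometric input is that $\delta_D(z) \le d(D)/2$ for every $z \in D$, which holds because the open ball $B(z,\delta(z))$ is contained in $D$ and therefore has diameter $2\delta(z) \le d(D)$. Applying this bound to $y$ gives $b \le d/2$, while applying it to $x$ gives $d - a \ge d/2 \ge b$. Hence $ab \le a(d-a) \le (a+r)(d-a)$, as required.

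There is no genuine obstacle; the argument is essentially a single algebraic line once the clean reduction is identified. The one subtlety worth flagging is that the WLOG assumption $\delta(x) \le \delta(y)$ must be converted into $\eta(x) \wedge \eta(y) = \eta(x)$ via Lemma~\ref{Prop-of-eta-delta}(i) before the algebra begins, since otherwise one would be mixing $\delta$ on the right with $\eta$ on the left and could not cancel cleanly. Notably, the factor $d(D)$ in the numerator of $\zeta_D$ precisely balances the factor $d(D)-\delta(x)$ in the denominator of $\eta(x)$ against the bound $\delta \le d/2$, which explains why the sharp constant on the right-hand side turns out to be exactly $2$.
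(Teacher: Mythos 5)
Your proof is correct, and it follows the same basic strategy as the paper: exponentiate, assume WLOG $\delta(x)\le\delta(y)$ so that $\eta(x)\wedge\eta(y)=\eta(x)$, and reduce the comparison to a one-line algebraic estimate. The difference lies in how that final estimate is closed. The paper verifies
\[
\frac{d(D)}{d(D)-\delta(x)}\le 2\le 1+\frac{\delta(x)+|x-y|}{\delta(y)},
\]
where the second inequality invokes the $1$-Lipschitz property of $\delta$ (Lemma~\ref{Prop-of-eta-delta}(ii)), i.e.\ $\delta(y)\le\delta(x)+|x-y|$. You instead reduce cleanly to $ab\le(a+r)(d-a)$ and dispatch it using only the diameter bound $\delta\le d(D)/2$ applied to both points, via $b\le d/2\le d-a$ followed by the trivial $a\le a+r$. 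Your route is marginally more economical: it needs only one geometric input ($\delta\le d/2$, used twice) rather than two (the diameter bound plus the Lipschitz estimate), and the observation that the bounds on $a$ and $b$ already force $b\le d-a$ is a nice simplification. Both proofs are sound and of comparable length; neither has any gap. One small stylistic remark: it is worth noting explicitly that the case $|x-y|=0$ is trivial, since you divide by $r$ during the reduction.
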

    \begin{proof}
        Without loss of any generality, we may assume that $\delta(x)\leq \delta(y)$. Hence, by Lemma \ref{Prop-of-eta-delta} (i), we have $\eta(x)\leq \eta(y)$. Now we start with the second part of the inequality \eqref{mdelta} and use Lemma \ref{Prop-of-eta-delta} (ii) to obtain
        \begin{align*}
            \frac{d(D)}{d(D)-\delta(x)}\leq 2
            \leq 1+\frac{\delta(x)}{\delta(y)}+\frac{|x-y|}{\delta(y)}
            \implies &\ \frac{d(D)}{\eta(x)\wedge\eta(y)}=\frac{d(D)}{\eta(x)} \leq \frac{1}{\delta(x)}+\frac{1}{\delta(y)}+\frac{|x-y|}{\delta(x)\delta(y)}\\
            \implies & \ 1+\frac{d(D)\,|x-y|}{\eta(x)\wedge\eta(y)}\leq 1+\frac{|x-y|}{\delta(x)}+\frac{|x-y|}{\delta(y)}+\frac{|x-y|^2}{\delta(x)\delta(y)}\\
            \implies & \ 1+\frac{d|x-y|}{\eta(x)\wedge\eta(y)}\leq \left(1+\frac{|x-y|}{\delta(x)}\right)\left(1+\frac{|x-y|}{\delta(y)}\right).
        \end{align*}
        Finally, taking logarithm on both sides gives the result. The sharpness is evident because of the limit $\zeta'_{\mathbb{D}}(-t,t)/j_{\mathbb{D}}(-t,t)\rightarrow 2$ as $t\rightarrow 0$.
    \end{proof}

\subsection{Comparison with the $k_D$-metric}	
	We have the following inequality between $\zeta_D$ and $k_D$ in bounded domains. 
  \begin{proposition}\label{equiv of k with zeta 2}
	For any bounded domain $D$ in $\mathbb{R}^n$ and for $x,y$ in $D$, we have 
	\begin{align*}
		\zeta_D(x,y)\leq 2\,k_D(x,y).
	\end{align*}
	The inequality is sharp.
  \end{proposition}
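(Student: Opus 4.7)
The plan is to obtain the inequality by simply chaining two results already established in the excerpt. Specifically, Theorem \ref{equiv of zeta&j}(i) gives $\zeta_D(x,y)\leq 2\,j_D(x,y)$ for any bounded domain, and Lemma \ref{j&j' equiv & reln wth k}(ii) gives $j_D(x,y)\leq k_D(x,y)$. Composing these two yields
\[
\zeta_D(x,y)\ \leq\ 2\,j_D(x,y)\ \leq\ 2\,k_D(x,y),
\]
which is exactly the claim. There is essentially no technical obstacle in the main inequality; the work has already been done in the preceding subsection.

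For sharpness, the natural test case is $D=\mathbb{D}$ with the points $x=0$ and $y=t$ for $t\in(0,1)$, letting $t\to 0^{+}$. On the radial segment one has $\delta_{\mathbb{D}}(s)=1-s$ for $s\in[0,t]$, so the path integral along this segment gives an upper bound $k_{\mathbb{D}}(0,t)\leq -\log(1-t)$, while Lemma \ref{j&j' equiv & reln wth k}(ii) combined with the formula $j_{\mathbb{D}}(0,t)=\log(1/(1-t))$ yields the matching lower bound. Hence $k_{\mathbb{D}}(0,t)=-\log(1-t)$. Combining this with the explicit formula $\zeta_{\mathbb{D}}(0,t)=\log\bigl(1+2t/(1-t^{2})\bigr)$ from the unit disk example and computing the first-order Taylor expansions as $t\to 0^{+}$ gives $\zeta_{\mathbb{D}}(0,t)\sim 2t$ and $k_{\mathbb{D}}(0,t)\sim t$, so the ratio $\zeta_{\mathbb{D}}(0,t)/k_{\mathbb{D}}(0,t)\to 2$. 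This confirms the constant $2$ cannot be improved.

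The only step that requires any care is the quick verification of $k_{\mathbb{D}}(0,t)=-\log(1-t)$; once this is in hand, the sharpness computation is a one-line asymptotic. Everything else reduces to citing results established earlier in the excerpt.
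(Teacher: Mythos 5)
Your proof is correct and takes essentially the same route as the paper: the main inequality is obtained by chaining Theorem \ref{equiv of zeta&j}(i) with Lemma \ref{j&j' equiv & reln wth k}(ii), and the sharpness is checked on the same test configuration $x=0$, $y=t\to 0^{+}$ in $\mathbb{D}$. The only (cosmetic) difference is that you spell out the verification $k_{\mathbb{D}}(0,t)=-\log(1-t)$ by squeezing between the radial path integral and $j_{\mathbb{D}}(0,t)$, whereas the paper simply invokes the known fact that $k_{\mathbb{D}}$ and $j_{\mathbb{D}}$ coincide on a radial segment and re-uses the limit already computed in the proof of Theorem \ref{equiv of zeta&j}(i).
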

  \begin{proof}
  	Direct use of Theorem \ref{equiv of zeta&j} and Lemma \ref{j&j' equiv & reln wth k} (ii) yields the inequality. To see the sharpness, we use the fact that on a radial line in $\mathbb{D}$, $k_{\mathbb{D}}$ matches with $j_{\mathbb{D}}$. Therefore, we have
  	\begin{align*}
  		\lim_{t\rightarrow0}\frac{\zeta_{\mathbb{D}}(0,t)}{k_{\mathbb{D}}(0,t)}=\lim_{t\rightarrow0}\frac{\zeta_{\mathbb{D}}(0,t)}{j_{\mathbb{D}}(0,t)}=2,
  	\end{align*}	
  	as we have seen in Theorem \ref{equiv of zeta&j} (i).
  \end{proof}
  In Proposition \ref{equiv of k with zeta 1}, we will see that the constant $2$ can be replaced with $1$ in the case of non-uniform bounded domains.    


\section{\bf Inner metric of $\zeta_D$}\label{inner metric-sec}		
   In this section, our primary goal is to show that in any bounded domain $D$ of $\mathbb{R}^n$, $n\geq 2$, the metric $m_D$ is the inner metric of $\zeta_D$. To prove the main result, we begin with the following lemma:
	\begin{lemma}\label{lemma-for-inner metric proof}
		Let $D\subset\mathbb{R}^n$, $n\geq 2$, be a bounded domain and $x$ be any point in $D$.
		\begin{enumerate}
			\item[(i)] Then for any $y\in B_x=B(x,\delta(x))$, we have
			\begin{align*}
				m_D(x,y)\leq\log\left(1+\frac{d(D)\,|x-y|}{\eta(x)-d(D)|x-y|}\right).
			\end{align*}
			\item[(ii)] For an arbitrarily small $s\in(0,1)$ and for any point $y\in B(x,s\,\eta(x)/d(D))$, we have
			\begin{align*}
				m_D(x,y)\leq \frac{1}{1-s}\,\zeta_D(x,y).
			\end{align*}
		\end{enumerate}
	\end{lemma}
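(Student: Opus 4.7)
The plan is to tackle (i) by a direct line-segment integration of the weight $d(D)/\eta$ along $[x,y]$, and then bootstrap to (ii) by combining (i) with an elementary analytic inequality in one variable.

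For (i), since $y\in B_x=B(x,\delta(x))$, the segment $[x,y]$ lies in $D$, so $m_D(x,y)$ is bounded by the line integral along $\gamma(t)=x+t(y-x)$, $t\in[0,1]$. The key ingredient is Lemma \ref{Prop-of-eta-delta}(iii): $\eta$ is $d(D)$-Lipschitz, which yields $\eta(\gamma(t))\ge \eta(x)-d(D)\,t\,|x-y|$ along the segment. Assuming $d(D)|x-y|<\eta(x)$ (otherwise the claimed bound is vacuous), the integral
$$
\int_0^1 \frac{d(D)\,|x-y|}{\eta(x)-d(D)\,t\,|x-y|}\,dt
$$
is elementary and evaluates to $\log\bigl(\eta(x)/(\eta(x)-d(D)|x-y|)\bigr)$, which is exactly the right-hand side of (i). This step is essentially a routine computation once the Lipschitz lower bound on $\eta$ is in place.

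For (ii), I would set $u:=d(D)|x-y|/\eta(x)$, so that $u<s$ by hypothesis. Part (i) then gives $m_D(x,y)\le -\log(1-u)$. On the other side, since $\eta(x)\wedge\eta(y)\le \eta(x)$ always, we have
$$
\zeta_D(x,y)=\log\!\left(1+\frac{d(D)\,|x-y|}{\eta(x)\wedge\eta(y)}\right)\ge \log(1+u),
$$
which sidesteps any case analysis depending on whether $\eta(y)\lessgtr \eta(x)$. What remains is the one-variable inequality
$$
-\log(1-u)\le \frac{1}{1-s}\,\log(1+u),\qquad u\in[0,s].
$$

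This analytic inequality is the main technical obstacle. I would prove it by letting $g(u):=\tfrac{1}{1-s}\log(1+u)+\log(1-u)$ and showing $g\ge 0$ on $[0,s]$. Computing $g'(u)=\tfrac{1}{(1-s)(1+u)}-\tfrac{1}{1-u}$, one finds that $g$ increases on $[0,s/(2-s)]$ and decreases afterwards, so its minimum on $[0,s]$ is attained at an endpoint. Since $g(0)=0$, it suffices to verify $g(s)\ge 0$, i.e. $h(s):=\log(1+s)+(1-s)\log(1-s)\ge 0$. A short calculation gives $h(0)=0$ and $h''(s)=\tfrac{1}{1-s}-\tfrac{1}{(1+s)^2}>0$ for $s\in(0,1)$, whence $h'$ is increasing with $h'(0)=0$, forcing $h\ge 0$. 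Combining these three ingredients yields (ii), and the proof concludes.
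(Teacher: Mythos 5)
Your proof is correct. For part (i), the underlying idea is the same as the paper's: integrate the density $d(D)/\eta$ along the segment $[x,y]$ and use the $d(D)$-Lipschitz property of $\eta$ (Lemma \ref{Prop-of-eta-delta}(iii)) to obtain $\eta(z)\ge\eta(x)-d(D)|x-z|$. Your direct parametrization is actually cleaner: the paper first replaces $[x,y]$ by a rotated segment $[x,w]$ pointing toward the nearest boundary point, appealing to a rotation-invariance that $m_D$ does not literally enjoy in a general domain; this detour is unnecessary since the Lipschitz bound applies directly along $[x,y]$ regardless of direction, exactly as you argue.

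For part (ii) you take a genuinely different route. After reducing to $u:=d(D)|x-y|/\eta(x)<s$ and noting $m_D(x,y)\le-\log(1-u)$ and $\zeta_D(x,y)\ge\log(1+u)$, you prove the one-variable inequality
$$
-\log(1-u)\le\frac{1}{1-s}\log(1+u),\qquad u\in[0,s],
$$
from scratch by a two-stage calculus argument (analyzing $g$ and then $h$), and your computations check out. The paper instead factors this into two lighter steps: first, $\eta(x)-d(D)|x-y|\ge(1-s)\eta(x)$ gives $m_D(x,y)\le\log\bigl(1+\tfrac{u}{1-s}\bigr)$ (an algebraic consequence of $u\le s$), and then the Bernoulli inequality $\log(1+at)\le a\log(1+t)$, $a\ge1$, already recorded in Section \ref{preli}, gives $\log\bigl(1+\tfrac{u}{1-s}\bigr)\le\tfrac{1}{1-s}\log(1+u)$. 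The paper's version is shorter because it reuses Bernoulli as a black box; yours is self-contained and in fact establishes a slightly sharper intermediate statement (bypassing $\log(1+u/(1-s))$ altogether), at the cost of a longer calculus verification. Both are valid; the paper's presentation is leaner and better matched to the toolbox it has already set up.
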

	\begin{proof}
    \begin{enumerate}
		\item [(i)]
			Let us choose a point $w$ on the circle $\partial B(x,|x-y|)$ such that the point $w$ lies on the line segment joining $x$ to the nearest boundary point, say $\xi$. Then for any point $z\in[x,w]$, we have $\delta(z)\leq\delta(x)$ and hence $\eta(z)\leq\eta(x)$, by Lemma \ref{Prop-of-eta-delta} (i). Again, using (ii) of the same lemma, we have $\eta(x)-\eta(z)\leq d(D)|x-z|$. With all these in our hands, we compute
			\begin{align*}
				m_D(x,y)\leq d(D)\int_{[x,y]}\frac{ds}{\eta(z)}
				= d(D)\int_{[x,w]}\frac{ds}{\eta(z)}
				\leq &d(D)\int_{[x,w]}\frac{ds}{\eta(x)-d(D)\,|x-z|}\\
				=&\int_{\eta(x)-d(D)\,|x-w|}^{\eta(x)}\frac{dt}{t}\\
				=&\log\left(\frac{\eta(x)}{\eta(x)-d(D)\,|x-y|}\right)\\
				=&\log\left(1+\frac{d(D)\,|x-y|}{\eta(x)-d(D)|x-y|}\right).
			\end{align*}
			The second step follows from the fact that rotation is an isometry of the $m_D$-metric, and in the second last step we used $|x-y|=|x-w|$, since $w\in\partial B(x,|x-y|)$.
			
			\item[(ii)] Since $s\,\eta(x)/d(D)<\eta(x)/d(D)\leq \delta(x)$, we see that $y\in B(x,s\,\eta(x)/d(D))$ implies $y\in B(x,\delta(x))$. Applying (i), we obtain
			\begin{align*}
				m_D(x,y)\leq \log\left(1+\frac{d(D)\,|x-y|}{\eta(x)-d(D)|x-y|}\right)
				\leq& \log\left(1+\frac{d(D)\,|x-y|}{(1-s)\eta(x)}\right)\\
				\leq& \frac{1}{1-s}\log\left(1+\frac{d(D)\,|x-y|}{\eta(x)}\right)\\
				\leq& \frac{1}{1-s}\zeta_D(x,y),
			\end{align*}
			where the third inequality follows from Bernoulli's inequality.
        \end{enumerate}
        This completes the proof.
	\end{proof}
	
	\begin{corollary}
		Let $D\subsetneq\mathbb{R}^n$, $n\geq 2$, be a domain and $x$ be any point in $D$.
		\begin{enumerate}
			\item[(i)] Then for any $y\in B_x=B(x,\delta(x))$, we have
			\begin{align*}
				k_D(x,y)\leq\log\left(1+\frac{|x-y|}{\delta(x)-|x-y|}\right).
			\end{align*}
			\item[(ii)] For an arbitrarily small $s\in(0,1)$ and for any point $y\in B(x,s\,\delta(x))$, we have
			\begin{align*}
				k_D(x,y)\leq \frac{1}{1-s}\,j_D(x,y).
			\end{align*}
		\end{enumerate}
	\end{corollary}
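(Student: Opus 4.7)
The statement to prove is an exact unbounded-analogue of Lemma \ref{lemma-for-inner metric proof}: replacing $m_D$ by $k_D$, $\eta$ by $\delta$, $d(D)$ by $1$, and $\zeta_D$ by $j_D$ should yield the same type of estimate. So the plan is to mirror the proof of Lemma \ref{lemma-for-inner metric proof} verbatim, using the $1$-Lipschitz property of $\delta$ (Lemma \ref{Prop-of-eta-delta} (ii)) in place of the $d(D)$-Lipschitz property of $\eta$ (Lemma \ref{Prop-of-eta-delta} (iii)).

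For part (i), the condition $y \in B(x,\delta(x))$ guarantees that the Euclidean segment $[x,y]$ lies in $D$, so it is an admissible path in the definition of $k_D(x,y)$. Along this segment, the $1$-Lipschitz inequality gives $\delta(z) \geq \delta(x) - |x-z|$ for every $z \in [x,y]$. The plan is to substitute this lower bound into the integral $\int_{[x,y]} |dz|/\delta(z)$ and carry out the resulting one-variable integration, which yields $\log(\delta(x)/(\delta(x)-|x-y|))$. Rewriting this as $\log(1 + |x-y|/(\delta(x)-|x-y|))$ gives the claimed bound. (Unlike in the $m_D$-version, no rotation step is needed since the Lipschitz bound is already sharp enough on $[x,y]$ itself.)

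For part (ii), the assumption $y \in B(x, s\,\delta(x))$ implies $|x-y| < s\,\delta(x)$, hence $\delta(x) - |x-y| > (1-s)\,\delta(x)$. Substituting this into the estimate from part (i) and then applying Bernoulli's inequality (recalled at the end of Section \ref{preli}) with the factor $a = 1/(1-s) \geq 1$ will allow us to pull the $1/(1-s)$ factor outside the logarithm. Finally, bounding $\log(1+|x-y|/\delta(x))$ above by $j_D(x,y) = \log(1 + |x-y|/(\delta(x) \wedge \delta(y)))$ gives the stated conclusion.

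There is no real obstacle: the argument is a routine translation of the proof of Lemma \ref{lemma-for-inner metric proof}. Alternatively, one may observe that taking the formal limit $d(D) \to \infty$ in the lemma recovers the corollary, since $d(D)/\eta_D(z) \to 1/\delta(z)$ and $\zeta_D \to j_D$ in that regime; however, because $d(D)$ is a fixed attribute of the given domain, the cleaner route is simply to re-run the same argument with $\delta$ and $j_D$ in place of $\eta$ and $\zeta_D$.
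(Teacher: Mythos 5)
Your proof is correct and is exactly the intended derivation: the paper states this as a corollary of Lemma \ref{lemma-for-inner metric proof} without supplying a proof, and the natural route is to re-run that lemma's argument with $\delta$ (which is $1$-Lipschitz by Lemma \ref{Prop-of-eta-delta}\,(ii)) in place of $\eta$, $1$ in place of $d(D)$, and $k_D$, $j_D$ in place of $m_D$, $\zeta_D$. Your side observation that the rotation-to-$[x,w]$ step can be dropped is also valid: the one-sided Lipschitz estimate $\delta(z)\geq\delta(x)-|x-z|$ already holds for all $z\in[x,y]$, so the integral bound goes through directly on $[x,y]$, and the same streamlining would in fact simplify the paper's own proof of Lemma \ref{lemma-for-inner metric proof}\,(i).
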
 
	
	Now, we make use of Lemma \ref{lemma-for-inner metric proof} to prove that the metric $m_D$ is the inner metric of $\zeta_D$. 
	\begin{theorem}\label{inner metric}
		Let $D$ be a proper subdomain of $\mathbb{R}^n$, $n\geq 2$. Then for any two points $x,y\in D$, we have $\tilde{\zeta}_D(x,y)=m_D(x,y)$.
	\end{theorem}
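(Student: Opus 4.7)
The plan is to establish the two inequalities $\tilde{\zeta}_D(x,y) \leq m_D(x,y)$ and $m_D(x,y) \leq \tilde{\zeta}_D(x,y)$ separately. For the first, Theorem \ref{equiv of zeta&m} provides the pointwise bound $\zeta_D \leq m_D$ on $D \times D$. For any $\gamma \in \Gamma_{xy}$ and any partition $\mathcal{P}$ of the parameter interval, this comparison immediately yields $\sum_i \zeta_D(\gamma(t_{i-1}), \gamma(t_i)) \leq \sum_i m_D(\gamma(t_{i-1}), \gamma(t_i))$, so $\ell_{\zeta_D}(\gamma) \leq \ell_{m_D}(\gamma)$. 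Taking the infimum over $\gamma$ gives $\tilde{\zeta}_D(x,y) \leq \tilde{m}_D(x,y)$, and since $m_D$ is a path metric (being the infimum of path integrals of a positive continuous conformal factor), $\tilde{m}_D = m_D$, producing $\tilde{\zeta}_D \leq m_D$.

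For the reverse inequality, which is the nontrivial direction, the key tool is the localized comparison in Lemma \ref{lemma-for-inner metric proof}(ii). Fix $\gamma \in \Gamma_{xy}$ with $\ell_{\zeta_D}(\gamma) < \infty$ (otherwise there is nothing to prove) and any $s \in (0,1)$. The image $\gamma([0,1])$ is a compact subset of $D$, so $\eta \circ \gamma$ is bounded below by a positive constant, and $\gamma$ is uniformly continuous on $[0,1]$ in the Euclidean sense. Hence one can choose a partition $0 = t_0 < t_1 < \cdots < t_n = 1$ so fine that $|\gamma(t_{i-1}) - \gamma(t_i)| < s\,\eta(\gamma(t_{i-1}))/d(D)$ for every $i$. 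Lemma \ref{lemma-for-inner metric proof}(ii) then applies on each consecutive pair, giving $m_D(\gamma(t_{i-1}), \gamma(t_i)) \leq \tfrac{1}{1-s}\,\zeta_D(\gamma(t_{i-1}), \gamma(t_i))$. Summing and applying the triangle inequality for $m_D$:
\[
m_D(x,y) \leq \sum_{i=1}^n m_D(\gamma(t_{i-1}), \gamma(t_i)) \leq \frac{1}{1-s} \sum_{i=1}^n \zeta_D(\gamma(t_{i-1}), \gamma(t_i)) \leq \frac{1}{1-s}\, \ell_{\zeta_D}(\gamma).
\]
Since $s \in (0,1)$ is arbitrary, letting $s \to 0^+$ yields $m_D(x,y) \leq \ell_{\zeta_D}(\gamma)$; taking the infimum over $\gamma \in \Gamma_{xy}$ produces $m_D(x,y) \leq \tilde{\zeta}_D(x,y)$, as desired.

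The degenerate case $d(D) = \infty$ is handled separately: there $\zeta_D \equiv j_D$ and $m_D \equiv k_D$, so the assertion reduces to the classical identity $\tilde{j}_D = k_D$. The main obstacle is the partition-fineness argument in the bounded case: one must verify that the Euclidean scale $s\,\eta(\cdot)/d(D)$ controlling the applicability of Lemma \ref{lemma-for-inner metric proof}(ii) can be achieved uniformly along the whole curve. This step relies crucially on the compactness of $\gamma([0,1])$ inside $D$ (which keeps $\eta\circ\gamma$ bounded below) and on the uniform continuity of $\gamma$, both of which are automatic for a continuous parametrization on a compact interval; once this is secured, the remaining steps are essentially a telescoping triangle-inequality argument with the factor $1/(1-s)$ swept away by $s \to 0^+$.
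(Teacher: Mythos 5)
Your proof is correct and essentially mirrors the paper's: the upper bound $\tilde{\zeta}_D \leq m_D$ comes from the pointwise comparison $\zeta_D \leq m_D$ (Theorem \ref{equiv of zeta&m}), and the lower bound from the local comparison of Lemma \ref{lemma-for-inner metric proof}(ii) combined with a telescoping triangle inequality and the limit $s\to 0^{+}$. Two small refinements relative to the paper's write-up: passing through $\tilde{m}_D = m_D$ (the length-metric property) avoids invoking the existence of an $m_D$-geodesic, and you explicitly justify, via compactness of $\gamma([0,1])\subset D$ and uniform continuity of $\gamma$, that a partition fine enough to apply Lemma \ref{lemma-for-inner metric proof}(ii) on every subarc actually exists.
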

	\begin{proof}
		First we prove that $\tilde{\zeta}_D(x,y)\leq m_D(x,y)$ holds for any points $x,y\in D$. Let $\gamma\in\Gamma_{xy}$ be an $m_D$-geodesic in $D$ and choose successive points $x=x_0,x_1,\ldots,x_n=y$, with $x_i=\gamma(t_i)$. Then using Theorem \ref{equiv of zeta&m}, we can write
		\begin{align*}
			\sum_{i=1}^{n}\zeta_D(x_{i-1},x_i)
			\leq\sum_{i=1}^{n}m_D(x_{i-1},x_i)
			=\sum_{i=1}^{n}\ell_m\left(\gamma[x_{i-1},x_i]\right).
		\end{align*}
		Taking supremum over all partitions of the path $\gamma$, we get
		\begin{align*}
			\ell_{\zeta}(\gamma)\leq \ell_m(\gamma)=m_D(x,y).
		\end{align*}
		Finally, considering infimum over $\Gamma_{xy}$, one can obtain 
		\begin{align*}
			\tilde{\zeta}_D(x,y)=\inf_{\beta\in \Gamma_{xy}}\ell_{\zeta}(\beta)\leq\ell_{\zeta}(\gamma)\leq m_D(x,y).
		\end{align*}
		
		Conversely, to see the case $\tilde{\zeta}_D(x,y)\geq m_D(x,y)$, let $\epsilon>0$ be very small. We choose a finite sequence $x=x_0,\,x_1,\ldots,x_n=y$ of points on any path $\beta\in\Gamma_{xy}\subset D$ in such a way that $|x_{s-1}-x_s|\leq \epsilon\,\eta(x_{s-1})/d(D)$ with $\beta(t_i)=x_i$. Then, using (ii) of Theorem \ref{lemma-for-inner metric proof}, we get
		\begin{align*}
			\ell_{\zeta}(\beta)
			=\sup_{\mathcal{P}}\sum_{i=1}^{n}\zeta_D(x_{i-1},x_i)
			\geq& (1-\epsilon)\,\sup_{\mathcal{P}}\sum_{i=1}^{n}m_D(x_{i-1},x_i)\\
			\geq& (1-\epsilon)\,m_D(x,y).
		\end{align*}
	 Taking infimum over all $\beta\in\Gamma_{xy}$, we get
		\begin{align*}
			\tilde{\zeta}_D(x,y)\geq (1-\epsilon)\,m_D(x,y).
		\end{align*}
		Letting $\epsilon\rightarrow 0$ yields the required inequality.
	\end{proof} 
    \begin{remark}
        Theorem \ref{equiv of zeta&m} also follows using Theorem \ref{inner metric} and the fact that $d\leq\tilde{d}$.
    \end{remark}
	Now we see that the density of $\zeta_D$ is the same as the density of $m_D$ in infinitesimal form. Indeed, we have the following proposition.
	\begin{proposition}\label{two sided log inqlty of zeta}
		For any two points $x,y\in D$ with $y\in B_x=B(x,\delta(x))$, we have the following inequality:
		\begin{align*}
			\log\left(1+\frac{d(D)\,|x-y|}{\eta(x)+d(D)|x-y|}\right)\leq \zeta_D(x,y)\leq\log\left(1+\frac{d(D)\,|x-y|}{\eta(x)-d(D)|x-y|}\right).
		\end{align*}  
		In particular, 
		\begin{align*}
			\lim_{y\rightarrow x}\frac{\zeta_D(x,y)}{|x-y|}=\frac{d(D)}{\eta(x)}.
		\end{align*}
	\end{proposition}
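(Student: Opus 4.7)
The plan is to derive both inequalities directly from the $d(D)$-Lipschitz continuity of $\eta$ established in Lemma \ref{Prop-of-eta-delta} (iii), combined with the monotonicity of the logarithm, and then to obtain the infinitesimal limit by a squeeze argument.

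First, I would observe that the Lipschitz bound $|\eta(x)-\eta(y)|\leq d(D)\,|x-y|$ yields the two-sided estimate $\eta(x)-d(D)\,|x-y|\leq \eta(y)\leq \eta(x)+d(D)\,|x-y|$. Combining this with the trivial facts $\eta(x)\geq \eta(x)-d(D)\,|x-y|$ and $\eta(x)\leq \eta(x)+d(D)\,|x-y|$, one obtains
\[
\eta(x)-d(D)\,|x-y|\;\leq\;\eta(x)\wedge\eta(y)\;\leq\;\eta(x)+d(D)\,|x-y|.
\]
The left inequality is informative only when its left-hand side is positive; otherwise the upper bound in the proposition is vacuously $+\infty$.

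Since the map $t\mapsto \log\bigl(1+d(D)\,|x-y|/t\bigr)$ is strictly decreasing in $t>0$, plugging the above two-sided estimate for $\eta(x)\wedge\eta(y)$ into the definition \eqref{defn of zeta} of $\zeta_D$ immediately produces the claimed double inequality.

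For the ``in particular'' assertion, I would divide each term of the double inequality by $|x-y|$ and let $y\to x$. Using the standard asymptotic $\log(1+u)/u\to 1$ as $u\to 0$, together with the fact that both $d(D)|x-y|/(\eta(x)\pm d(D)|x-y|)\to 0$, the outer expressions both tend to $d(D)/\eta(x)$, and the squeeze theorem delivers the stated limit. There is essentially no serious obstacle here; the whole argument reduces to a transparent application of the Lipschitz property of $\eta$ and monotonicity of the logarithm. The only mild technical point is ensuring $\eta(x)-d(D)\,|x-y|>0$ so that the upper bound is a finite meaningful quantity, which holds automatically once $y$ is close enough to $x$, exactly the regime relevant to the infinitesimal statement.
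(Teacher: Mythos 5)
Your argument is correct, and for the upper bound it takes a genuinely more elementary route than the paper. The paper's proof of the second inequality goes through Theorem \ref{equiv of zeta&m} (namely $\zeta_D\leq m_D$) combined with Lemma \ref{lemma-for-inner metric proof}(i), which bounds $m_D(x,y)$ from above by an integral along a radial segment; you instead use only the $d(D)$-Lipschitz continuity of $\eta$ (Lemma \ref{Prop-of-eta-delta}(iii)) to sandwich $\eta(x)\wedge\eta(y)$ between $\eta(x)\mp d(D)|x-y|$ and then invoke monotonicity of $t\mapsto \log\bigl(1+d(D)|x-y|/t\bigr)$. That is cleaner and self-contained: it avoids invoking the $m_D$-metric and its integral representation entirely. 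What the paper's detour buys, however, is that the identical upper bound for $m_D$ (Corollary \ref{two sided log inqlty of m_D}) falls out for free from Lemma \ref{lemma-for-inner metric proof}, whereas your argument proves the bound only for $\zeta_D$ and would require Theorem \ref{equiv of zeta&m} anyway to transfer it back to $m_D$. For the lower bound, both you and the paper make the same trivial observation that the denominator $\eta(x)+d(D)|x-y|$ dominates $\eta(x)\geq\eta(x)\wedge\eta(y)$. Your treatment of the positivity caveat (the upper bound is only meaningful when $\eta(x)-d(D)|x-y|>0$, which holds for $y$ sufficiently close to $x$) is sound, and the same caveat is implicitly present in the paper's Lemma \ref{lemma-for-inner metric proof}(i) as well. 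The squeeze argument for the infinitesimal limit matches the paper's.
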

	\begin{proof}
		From the definition of the $\zeta_D$-metric, we have the trivial first inequality
		\begin{align*}
			\log\left(1+\frac{d(D)\,|x-y|}{\eta(x)+d(D)|x-y|}\right)\leq
			\log\left(1+\frac{d(D)\,|x-y|}{\eta(x)}\right) 
			\leq\zeta_D(x,y).
		\end{align*}
		The second inequality follows from Theorem \ref{equiv of zeta&m} and Lemma \ref{lemma-for-inner metric proof}.
		
One can replace $|x-y|$ with another variable $t\rightarrow 0$ and employ the squeeze rule to prove the limit.
	\end{proof}
	
	\begin{corollary}\label{two sided log inqlty of m_D}
		For any two points $x,y\in D$ with $y\in B_x=B(x,\delta(x))$, we have the following inequality for $m_D$-metric:
		\begin{align*}
			\log\left(1+\frac{d(D)\,|x-y|}{\eta(x)+d(D)|x-y|}\right)\leq m_D(x,y)\leq\log\left(1+\frac{d(D)\,|x-y|}{\eta(x)-d(D)|x-y|}\right).
		\end{align*}
	\end{corollary}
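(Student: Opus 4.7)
The corollary is essentially a transcription of bounds already proved for $\zeta_D$ (Proposition \ref{two sided log inqlty of zeta}) to bounds for $m_D$, exploiting the relation $m_D \geq \zeta_D$ from Theorem \ref{equiv of zeta&m} and the fact that the upper bound established in Lemma \ref{lemma-for-inner metric proof}(i) was actually proved for $m_D$ itself. So there is essentially no new work to do; the plan is just to thread the inequalities together.

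For the upper bound, I would simply invoke Lemma \ref{lemma-for-inner metric proof}(i) verbatim: it gives
\[
m_D(x,y) \leq \log\!\left(1+\frac{d(D)\,|x-y|}{\eta(x)-d(D)\,|x-y|}\right)
\]
for every $y \in B_x$, which is exactly the right-hand bound.

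For the lower bound, I would use $m_D(x,y) \geq \zeta_D(x,y)$ from Theorem \ref{equiv of zeta&m} and then apply the (trivial half of the) inequality in Proposition \ref{two sided log inqlty of zeta}. Concretely, since $\eta(x) \wedge \eta(y) \leq \eta(x) \leq \eta(x)+d(D)|x-y|$, monotonicity of $\log(1+\cdot/\cdot)$ gives
\[
\zeta_D(x,y) = \log\!\left(1+\frac{d(D)\,|x-y|}{\eta(x)\wedge \eta(y)}\right) \geq \log\!\left(1+\frac{d(D)\,|x-y|}{\eta(x)+d(D)\,|x-y|}\right),
\]
and chaining with $m_D \geq \zeta_D$ completes the proof.

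There is no main obstacle: the hypothesis $y\in B(x,\delta(x))$ is only needed so that Lemma \ref{lemma-for-inner metric proof}(i) applies and so that $\eta(x) - d(D)|x-y|$ remains positive (via $d(D)|x-y| < d(D)\delta(x)$, noting $\eta(x) \geq \delta(x) \cdot d(D)/2$ in the relevant range is not even required since the bound of Lemma \ref{lemma-for-inner metric proof}(i) already carries this implicitly). Thus the proof is a two-line chaining argument and can be written as a direct consequence of the two preceding results.
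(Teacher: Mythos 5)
Your proof threads together exactly the three results the paper cites (Theorem \ref{equiv of zeta&m}, Lemma \ref{lemma-for-inner metric proof}(i), and Proposition \ref{two sided log inqlty of zeta}) in exactly the way the paper's one-line proof intends, so this is the same argument. One minor caution unrelated to your reasoning: the hypothesis $y\in B(x,\delta(x))$ only gives $d(D)|x-y|<d(D)\delta(x)\le 2\eta(x)$, which does not by itself force $\eta(x)-d(D)|x-y|>0$; this is a latent issue in Lemma \ref{lemma-for-inner metric proof}(i) and Proposition \ref{two sided log inqlty of zeta} themselves, which you correctly inherit rather than introduce.
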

	\begin{proof}
		The proof is straightforward by employing Theorem \ref{equiv of zeta&m}, Lemma \ref{lemma-for-inner metric proof}, and Proposition \ref{two sided log inqlty of zeta}.
	\end{proof}

	Taking $d(D)\rightarrow\infty$ in Proposition \ref{two sided log inqlty of zeta} and in Corollary \ref{two sided log inqlty of m_D}, we obtain an analogous inequality for the metrics $j_D$ and $k_D$. 
	\begin{corollary}
		Let $x,y\in D\subsetneq\mathbb{R}^n$. Then, for $d\in\left\{j_D,k_D\right\}$, we have
		\begin{align*}
			\log\left(1+\frac{|x-y|}{\delta(x)+|x-y|}\right)\leq d(x,y)\leq\log\left(1+\frac{|x-y|}{\delta(x)-|x-y|}\right).
		\end{align*}    
	\end{corollary}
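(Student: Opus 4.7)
The plan is to pass to the formal limit $d(D)\to\infty$ in Proposition \ref{two sided log inqlty of zeta} and Corollary \ref{two sided log inqlty of m_D}, as the author suggests. First, I would divide the numerator and denominator of every fraction inside the logarithms by $d(D)$, rewriting the bounds in Proposition \ref{two sided log inqlty of zeta} as
$$\log\!\left(1+\frac{|x-y|}{\eta(x)/d(D)+|x-y|}\right) \le \zeta_D(x,y) \le \log\!\left(1+\frac{|x-y|}{\eta(x)/d(D)-|x-y|}\right),$$
and likewise for $m_D$ via Corollary \ref{two sided log inqlty of m_D}. The key observation is that $\eta(x)/d(D)=\delta(x)\bigl(1-\delta(x)/d(D)\bigr)\to \delta(x)$ as $d(D)\to\infty$. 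By the convention already adopted in the paper (in the proof that $\zeta_D,\zeta_D'$ are metrics), the ratio $d(D)/\eta$ coincides with $1/\delta$ in the unbounded setting, so $\zeta_D$ and $m_D$ reduce literally to $j_D$ and $k_D$, respectively, and continuity of $\log$ delivers the claimed inequalities.

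For a direct verification avoiding any limit interpretation, each of the four one-sided inequalities can be checked from scratch. The lower bound on $j_D$ follows from $\delta(x)\wedge\delta(y)\le \delta(x)+|x-y|$, which is immediate from the $1$-Lipschitz property of $\delta$ in Lemma \ref{Prop-of-eta-delta} (ii); composing with Lemma \ref{j&j' equiv & reln wth k} (ii) then gives the lower bound on $k_D$. The upper bound on $k_D$ is exactly the statement of the unnumbered Corollary to Lemma \ref{lemma-for-inner metric proof} that was recorded just after the lemma. Finally, the upper bound on $j_D$ splits into two cases: if $\delta(x)\le \delta(y)$ it reduces to $(\delta(x)+|x-y|)(\delta(x)-|x-y|)\le \delta(x)^2$, while if $\delta(y)<\delta(x)$ it reduces to $\delta(x)-|x-y|\le\delta(y)$, again a consequence of Lemma \ref{Prop-of-eta-delta} (ii).

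I do not foresee any genuine obstacle here: the proof is essentially a routine specialization of material already established. The only conceptually delicate point is giving rigorous meaning to ``$d(D)\to\infty$'', but this is handled either by the rewriting above (the inequalities become valid in the limit by continuity) or, more cleanly, by the direct verification outlined in the second paragraph, which uses only the $1$-Lipschitz property of $\delta$ together with the already proven inequalities $j_D\le k_D$ and the upper bound on $k_D$ in Euclidean balls.
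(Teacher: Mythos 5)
Your first paragraph reproduces the paper's own argument—the paper indeed disposes of this corollary with the single remark ``Taking $d(D)\rightarrow\infty$ in Proposition \ref{two sided log inqlty of zeta} and in Corollary \ref{two sided log inqlty of m_D}, we obtain an analogous inequality''—and your rewriting via $\eta(x)/d(D)\to\delta(x)$ makes precise what that terse sentence means. Your second paragraph goes further and gives a direct, limit-free verification, which is worth commenting on: strictly speaking, ``$d(D)\to\infty$'' is not a literal limit that one can take inside a fixed bounded domain, so the paper's proof is really shorthand for ``the same estimates go through verbatim with $\eta(x)/d(D)$ replaced by $\delta(x)$''; your elementary check (lower bound on $j_D$ from $\delta(x)\wedge\delta(y)\leq\delta(x)+|x-y|$, lower bound on $k_D$ via $j_D\leq k_D$, upper bound on $k_D$ from the unnumbered corollary to Lemma \ref{lemma-for-inner metric proof}, upper bound on $j_D$ from the $1$-Lipschitz property of $\delta$) gives a self-contained proof valid in every domain. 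Two small economies you could make: the lower bound $\delta(x)\wedge\delta(y)\leq\delta(x)+|x-y|$ is even cheaper than Lipschitz, since $\delta(x)\wedge\delta(y)\leq\delta(x)<\delta(x)+|x-y|$ outright; and since $j_D\leq k_D$, it suffices to establish the lower bound for $j_D$ and the upper bound for $k_D$ alone, so two of your four checks are redundant. You might also note explicitly that the upper bound presupposes $|x-y|<\delta(x)$ (i.e.\ $y\in B(x,\delta(x))$), exactly as in the corollary of Lemma \ref{lemma-for-inner metric proof}; without this, the right-hand side is not defined.
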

    

\section{\bf Ball inclusion property}\label{ball inclusion}	
Now we shall observe how the metric balls, related to our metrics $\zeta_D$, $\zeta'_D$, and $m_D$, behave with each other and with other hyperbolic-type metrics. For a metric $d$, defined on a domain $D$, we denote a $d$-metric ball of radius $r$ and center at $z_0$ by $B_d(z_0,r):=\left\{z\in D:d(z,z_0)<r\right\}$.
\begin{theorem}\label{ball inclusion_zeta-euclidean}
	Let $D\subsetneq\mathbb{R}^n$ be a domain and $x\in D$. For $s>0$, we consider a $\zeta_D$-metric ball $B_{\zeta}(x,s)$. Then the following inclusions hold:
	\begin{align}
		B(x,r)\subset B_{\zeta}(x,s)\subset B(x,R),
	\end{align}
	with $r=(1-e^{-s})(\eta(x)/d(D))$ and $R=(e^s-1)(\eta(x)/d(D))$. The values of $r$ and $R$ are the best possible. Moreover, $R/r\rightarrow1$ as $s\rightarrow 0$.
\end{theorem}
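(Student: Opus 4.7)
The plan is to prove the two inclusions separately from the explicit form of $\zeta_D$ together with Proposition \ref{two sided log inqlty of zeta}, and then to handle sharpness and the ratio $R/r$. Both inclusions are one-step arguments once the right bound on $\eta(x)\wedge\eta(y)$ is identified.

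For the outer inclusion $B_\zeta(x,s)\subset B(x,R)$, I would unwind the definition: $\zeta_D(x,y)<s$ forces
\[
\frac{d(D)\,|x-y|}{\eta(x)\wedge\eta(y)}<e^s-1,
\]
so the trivial bound $\eta(x)\wedge\eta(y)\le\eta(x)$ yields $|x-y|<(e^s-1)\eta(x)/d(D)=R$.

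For the inner inclusion $B(x,r)\subset B_\zeta(x,s)$, I would first check that $B(x,r)\subset D$: since $1-e^{-s}\le 1$ and $d(D)-\delta(x)\le d(D)$, we have $r\le\eta(x)/d(D)\le\delta(x)$, so $B(x,r)\subset B(x,\delta(x))\subset D$. This makes the upper half of Proposition \ref{two sided log inqlty of zeta} applicable, and for $y\in B(x,r)$ the inequality $|x-y|<(1-e^{-s})\eta(x)/d(D)$ rearranges to $\eta(x)-d(D)|x-y|>e^{-s}\eta(x)>0$, whence
\[
\zeta_D(x,y)\le\log\left(1+\frac{d(D)\,|x-y|}{\eta(x)-d(D)|x-y|}\right)<\log\left(1+\frac{(1-e^{-s})\eta(x)}{e^{-s}\eta(x)}\right)=s.
\]

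For sharpness I would work in $D=\mathbb{D}$. With $x\ne 0$ and $y$ on the segment from $x$ to the origin one has $\eta(y)\ge\eta(x)$, so $\zeta_D(x,y)=\log(1+d(D)|x-y|/\eta(x))$ and points with $|x-y|$ just below $R$ lie in $B_\zeta(x,s)$; this rules out any $R'<R$. The identity $R/r=(e^s-1)/(1-e^{-s})=e^s$ then gives $R/r\to 1$ as $s\to 0$ immediately. The main obstacle will be the sharpness of the inner radius: the $d(D)$-Lipschitz bound on $\eta$ from Lemma \ref{Prop-of-eta-delta}(iii) is strict inside any given bounded domain, so $r$ cannot be attained by a fixed example and must be approached in a limiting sense, typically by pushing $x$ towards $\partial\mathbb{D}$ and letting $y$ move radially outward so that the ratio $(\eta(x)-\eta(y))/(d(D)|x-y|)$ tends to $1$.
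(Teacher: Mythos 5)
Your two-inclusion argument matches the paper's proof exactly: both halves reduce to the two-sided estimate of Proposition~\ref{two sided log inqlty of zeta} (the upper bound for the inner inclusion, the trivial bound $\eta(x)\wedge\eta(y)\le\eta(x)$ for the outer), and the rearrangements are the same. Your extra check that $B(x,r)\subset B(x,\delta(x))$ before invoking the proposition is a hygiene step the paper omits, but it is correct and worth having, since Proposition~\ref{two sided log inqlty of zeta} is stated only for $y\in B_x$.

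Where you diverge from the paper is in the sharpness discussion. The paper observes that the theorem is stated for arbitrary $D\subsetneq\mathbb{R}^n$ (not necessarily bounded), so it is allowed to take $D=\mathbb{R}^n\setminus\{0\}$; there $\zeta_D=j_D$, $\eta(x)/d(D)=\delta(x)$, and the statement becomes the classical $j$-ball inclusion, whose sharpness is Seittenranta's Theorem~3.8 verbatim. This sidesteps entirely the obstacle you identify: in the punctured space $\delta$ is exactly $1$-Lipschitz along rays from the origin, so both radii are attained. Your route through $D=\mathbb{D}$ is legitimate and your argument for the outer radius $R$ (take $y$ on the segment from $x$ toward the origin, so $\eta(y)\ge\eta(x)$) does go through, but your argument for the inner radius $r$ is only a sketch: you correctly diagnose that in a fixed bounded domain the Lipschitz bound $|\eta(x)-\eta(y)|\le d(D)|x-y|$ is strict, so equality must be approached via a sequence $x\to\partial\mathbb{D}$ with $y$ moving radially outward, but you do not carry out the computation. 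The paper's choice of an unbounded example is precisely what makes the sharpness a one-line citation instead of a limiting argument; if you prefer to stay in a bounded domain, you should finish the estimate showing that the critical $|x-y|$ where $\zeta_{\mathbb{D}}(x,y)=s$ tends to $(1-e^{-s})\eta(x)/d(D)$ as $\delta(x)\to 0$. The $R/r=e^s\to 1$ computation is fine.
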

\begin{proof}
	For the first inclusion, let $y\in B(x,r)$.  Then, we see that
	\begin{align*}
		|x-y|<r=(1-e^{-s})\frac{\eta(x)}{d(D)}
		&\implies e^{-s}<\frac{\eta(x)-d(D)\,|x-y|}{\eta(x)}\\
		&\implies s>\log\left(1+\frac{d(D)\,|x-y|}{\eta(x)-d(D)\,|x-y|}\right),
	\end{align*}
	which implies $\zeta_D(x,y)<s$ by using Proposition \ref{two sided log inqlty of zeta}. Thus, the first inclusion follows with the specified $r$.
	
	To see the next inclusion, we take a point $y\in B_{\zeta}(x,s)$, and for convenience, we let $\zeta=\zeta_D(x,y)<s$. We need to show that $|x-y|<R$, where $R$ is as mentioned in the statement. From the definition of $\zeta$, we have 

		$$
        1+\frac{d(D)\,|x-y|}{\eta(x)}\leq 1+\frac{d(D)\,|x-y|}{\eta(x)\wedge\eta(y)}=e^{\zeta}
		\implies|x-y|\leq (e^{\zeta}-1)\frac{\eta(x)}{d(D)}<R.
        $$

	Hence, the second inclusion follows. The fact that given $r$ and $R$ are the best possible is evident from \cite[Theorem 3.8]{Se} when $D=\mathbb{R}^n\setminus \{0\}$.
\end{proof}

In the next theorem, we show that the above ball inclusion property of the $\zeta_D$-metric is also true for the $m_D$-metric balls with the same values of $r$ and $R$.
\begin{theorem}\label{ball inclusion_m_D-euclidean}
    Let $D\subsetneq\mathbb{R}^n$ be a domain and $x\in D$. For $s>0$, we consider the $m_D$-metric ball $B_m(x,s)$. Then the following inclusions hold:
	\begin{align}
		B(x,r)\subset B_{m}(x,s)\subset B(x,R),
	\end{align}
	with $r=(1-e^{-s})(\eta(x)/d(D))$ and $R=(e^s-1)(\eta(x)/d(D))$. The values of $r$ and $R$ are the best possible. Moreover, $R/r\rightarrow1$ as $s\rightarrow 0$.
\end{theorem}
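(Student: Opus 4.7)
The plan is to bootstrap from the already-established ball inclusion for $\zeta_D$ (Theorem \ref{ball inclusion_zeta-euclidean}) together with the sandwich estimate for $m_D$ (Corollary \ref{two sided log inqlty of m_D}); no new machinery is needed. The two inclusions are handled by completely different routes.

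For the outer inclusion $B_m(x,s)\subset B(x,R)$ the argument is essentially one line: Theorem \ref{equiv of zeta&m} gives $\zeta_D\le m_D$ pointwise on $D\times D$, so $B_m(x,s)\subset B_\zeta(x,s)$, and Theorem \ref{ball inclusion_zeta-euclidean} then places the latter inside $B(x,R)$ with exactly $R=(e^s-1)\eta(x)/d(D)$. For the inner inclusion $B(x,r)\subset B_m(x,s)$, I would first use Lemma \ref{Prop-of-eta-delta}(iv) to verify
$$r=(1-e^{-s})\frac{\eta(x)}{d(D)}\le \frac{\eta(x)}{d(D)}\le \delta(x),$$
so that any $y\in B(x,r)$ lies in $B_x=B(x,\delta(x))$ and is therefore in the range of validity of the upper estimate of Corollary \ref{two sided log inqlty of m_D}. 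A short rearrangement of the hypothesis $|x-y|<(1-e^{-s})\eta(x)/d(D)$ gives $\eta(x)-d(D)|x-y|>e^{-s}\eta(x)$, and plugging this into the corollary yields
$$m_D(x,y)\le \log\left(1+\frac{d(D)|x-y|}{\eta(x)-d(D)|x-y|}\right)<\log e^s = s,$$
which is essentially the arithmetic already performed in the proof of Theorem \ref{ball inclusion_zeta-euclidean}, with Corollary \ref{two sided log inqlty of m_D} now supplying the upper bound in place of Proposition \ref{two sided log inqlty of zeta}.

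Sharpness and the asymptotic $R/r\to 1$ as $s\to 0$ follow as in the previous theorem. For sharpness I would specialize to $D=\mathbb{R}^n\setminus\{0\}$, in which $m_D$ coincides with $k_D$ and the prefactor $\eta(x)/d(D)$ degenerates to $\delta(x)$; the corresponding bounds $r=(1-e^{-s})\delta(x)$ and $R=(e^s-1)\delta(x)$ for $k_D$-balls are known to be sharp by \cite[Theorem 3.8]{Se}. The ratio $R/r=e^s$ trivially tends to $1$ as $s\to 0$. The only point requiring any care in the whole argument is verifying $B(x,r)\subset B_x$ so that the corollary applies, and this is disposed of at once by Lemma \ref{Prop-of-eta-delta}(iv); I therefore do not expect a serious technical obstacle.
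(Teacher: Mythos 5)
Your proposal is correct and follows essentially the same route as the paper: the outer inclusion comes from $\zeta_D\le m_D$ plus Theorem \ref{ball inclusion_zeta-euclidean}, and the inner inclusion from the upper bound on $m_D$ in a nearby Euclidean ball (the paper invokes Lemma \ref{lemma-for-inner metric proof}(i) directly, you route through Corollary \ref{two sided log inqlty of m_D}, which is derived from the same lemma). The only cosmetic difference is the citation for sharpness — you point to Seittenranta as in the $\zeta_D$ case while the paper cites \cite[(4.11)]{HaKlVu}; both address the same degenerate example $D=\mathbb{R}^n\setminus\{0\}$.
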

\begin{proof}
    Let $y$ be any point in $D\cap B(x,r)$. Then, from Lemma \ref{lemma-for-inner metric proof} (i), we have
    \begin{align*}
        |x-y|<r\implies m_D(x,y)<s,
    \end{align*}
    which gives the first inclusion property.
    The next inclusion follows from Theorem \ref{equiv of zeta&m} and Theorem \ref{ball inclusion_zeta-euclidean}. One can refer to \cite[(4.11)]{HaKlVu} to see that the given values of $r$ and $R$ are the best possible radii.
\end{proof}

\begin{theorem}\label{ball inclusion_m_D-zeta_D}
    Let $D\subset \mathbb{R}^n$ and $s\in(0,\log 2)$. Then we have the following inclusion property:
    \begin{align*}
        B_{\zeta}(x,r)\subset B_{m}(x,s)\subset B_{\zeta}(x,s)\subset B_{m}(x,R),
    \end{align*}
    with 
    $$r=\log(2-e^{-s})\textrm{ and } R=\log\frac{1}{2-e^{s}}.$$
    Moreover, $R/r\rightarrow1$ as $s\rightarrow 0$.
\end{theorem}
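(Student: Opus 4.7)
My plan is to treat the middle inclusion separately and then chain the Euclidean ball inclusion theorems for the outer two. The middle inclusion $B_m(x,s) \subset B_\zeta(x,s)$ is a direct consequence of Theorem \ref{equiv of zeta&m}, since it gives $\zeta_D \leq m_D$ pointwise.

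For the first inclusion $B_\zeta(x,r) \subset B_m(x,s)$, I would chain the outer part of Theorem \ref{ball inclusion_zeta-euclidean} with the inner part of Theorem \ref{ball inclusion_m_D-euclidean}. Given $y \in B_\zeta(x,r)$, Theorem \ref{ball inclusion_zeta-euclidean} yields $|x-y| < (e^r-1)\eta(x)/d(D)$. The choice $r = \log(2-e^{-s})$ is calibrated precisely so that $e^r - 1 = 1 - e^{-s}$, which matches the inner radius in Theorem \ref{ball inclusion_m_D-euclidean} for $B_m(x,s)$; hence $m_D(x,y) < s$.

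For the third inclusion $B_\zeta(x,s) \subset B_m(x,R)$, I would apply Theorem \ref{ball inclusion_zeta-euclidean} once more to obtain $|x-y| < (e^s-1)\eta(x)/d(D)$. Since $s < \log 2$, we have $e^s - 1 < 1$, and combined with Lemma \ref{Prop-of-eta-delta}(iv) this forces $|x-y| < \delta(x)$, so Lemma \ref{lemma-for-inner metric proof}(i) becomes applicable. Setting $t = d(D)|x-y|/\eta(x) \in (0, e^s-1)$, the lemma's upper bound simplifies to $m_D(x,y) \leq \log(1/(1-t))$, which by monotonicity in $t$ is strictly less than $\log(1/(2-e^s)) = R$. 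I expect the main (mild) obstacle to be exactly this applicability check: without $s < \log 2$ the Euclidean radius from the $\zeta_D$-ball could exceed $\delta(x)$ and the Lemma would not kick in, which is precisely why the hypothesis is imposed.

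Finally, for the claim $R/r \to 1$ as $s \to 0$, a Taylor expansion around $s=0$ gives $r = \log(2-e^{-s}) = s - s^2 + O(s^3)$ and $R = -\log(2-e^s) = s + s^2 + O(s^3)$, so both are asymptotically $s$ and the ratio tends to $1$.
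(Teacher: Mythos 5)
Your proposal is correct and takes essentially the same approach as the paper: the middle inclusion comes from Theorem \ref{equiv of zeta&m}, and the outer inclusions are obtained by chaining the Euclidean ball inclusions of Theorems \ref{ball inclusion_zeta-euclidean} and \ref{ball inclusion_m_D-euclidean}, with $r$ and $R$ calibrated so that $e^r - 1 = 1 - e^{-s}$ and $e^s - 1 = 1 - e^{-R}$. The only cosmetic differences are that you unpack Lemma \ref{lemma-for-inner metric proof}(i) directly for the third inclusion instead of citing Theorem \ref{ball inclusion_m_D-euclidean} as a black box (which itself rests on that lemma), and you compute the limit by Taylor expansion where the paper uses L'H\^{o}pital.
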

\begin{proof}
    First, note that the choice of $s$ makes both $r$ and $R$ positive. We make use of Theorems \ref{ball inclusion_zeta-euclidean} and \ref{ball inclusion_m_D-euclidean} step-by-step to conclude
    \begin{align}\label{ball_m-zeta1}
        B_{\zeta}(x,r)\subset B(x,(e^r-1)(\eta(x)/d(D)))=B(x,(1-e^{-s})(\eta(x)/d(D)))\subset B_{m}(x,s),
    \end{align}
    and 
    \begin{align}\label{ball_m-zeta2}
        B_{m}(x,s)\subset B_{\zeta}(x,s)\subset B_{\zeta}(x,(e^s-1)(\eta(x)/d(D)))=B(x,(1-e^{-R})(\eta(x)/d(D)))\subset B_m(x,R),
    \end{align}
    where the first inclusion in \eqref{ball_m-zeta2} follows by Theorem \ref{equiv of zeta&m}. Note that all the equalities of balls in the above are due to the chosen values of $r$ and $R$. Thus \eqref{ball_m-zeta1} and \eqref{ball_m-zeta2} together give all the required inclusion relations. To see the limit,
    \begin{align*}
        \lim_{s\rightarrow 0}\frac{R}{r}
        =-\lim_{s\rightarrow 0}\frac{\log(2-e^s)}{\log(2-e^{-s})}
        =\lim_{s\rightarrow 0}\cfrac{\cfrac{e^s}{2-e^s}}{\cfrac{e^{-s}}{2-e^{-s}}}=1.
    \end{align*}
    This concludes the result.
\end{proof}

\begin{proposition}\label{ball inclusion_zeta-zeta'}
    Let $D\subsetneq\mathbb{R}^n$ be a domain and $x\in D$. Then the following inclusions hold for $s>0$:
         $$B(x,r)\subset B_{\zeta'}(x,s)\subset B(x,R),$$
    with $r=(1-e^{-s})(\eta(x)/d(D))$ and $R=(e^{2s}-1)(\eta(x)/d(D))$.
\end{proposition}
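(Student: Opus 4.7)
The plan is to derive this result entirely by chaining the ball inclusion property for $\zeta_D$ established in Theorem \ref{ball inclusion_zeta-euclidean} with the two-sided equivalence $\zeta'_D \leq \zeta_D \leq 2\,\zeta'_D$ recorded in Proposition \ref{eqvlnc of two metric}. Since these two metrics differ only by a fixed multiplicative constant, translating between their metric balls is just a matter of scaling the radius parameter.

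For the first inclusion $B(x,r) \subset B_{\zeta'}(x,s)$, I would invoke Theorem \ref{ball inclusion_zeta-euclidean} (applied at parameter $s$) to obtain $B(x,r) \subset B_{\zeta}(x,s)$ with $r = (1-e^{-s})(\eta(x)/d(D))$. The inequality $\zeta'_D(x,y) \leq \zeta_D(x,y)$ from Proposition \ref{eqvlnc of two metric} then shows $B_{\zeta}(x,s) \subset B_{\zeta'}(x,s)$, so composing the two gives the desired inclusion with the stated value of $r$.

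For the second inclusion $B_{\zeta'}(x,s) \subset B(x,R)$, I would exploit the reverse half $\zeta_D(x,y) \leq 2\,\zeta'_D(x,y)$ of Proposition \ref{eqvlnc of two metric}: if $y \in B_{\zeta'}(x,s)$, then $\zeta_D(x,y) < 2s$, so $y \in B_{\zeta}(x,2s)$. Applying Theorem \ref{ball inclusion_zeta-euclidean} at parameter $2s$ now yields
\[
B_{\zeta}(x,2s) \subset B\!\left(x,(e^{2s}-1)(\eta(x)/d(D))\right) = B(x,R),
\]
which completes the chain.

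There is no substantive obstacle; the only thing to be careful about is the direction of containment of metric balls when one metric dominates another (larger metric means smaller balls at the same radius). Since the proposition does not claim sharpness of $r$ and $R$, and since it also omits a limit statement analogous to $R/r\to 1$, no further work beyond the two-step chaining above is needed.
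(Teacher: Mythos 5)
Your proof is correct and follows precisely the same route as the paper: translate between $\zeta'$-balls and $\zeta$-balls via Proposition \ref{eqvlnc of two metric}, yielding $B_{\zeta}(x,s)\subset B_{\zeta'}(x,s)\subset B_{\zeta}(x,2s)$, and then apply the Euclidean ball sandwich from Theorem \ref{ball inclusion_zeta-euclidean} at parameters $s$ and $2s$ respectively. Your write-up just spells out the ball-containment directions that the paper leaves implicit.
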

\begin{proof}
    We use Proposition \ref{eqvlnc of two metric} to conclude the result. Therefore, we have
    $$B{\zeta}(x,s)\subset B_{\zeta'}(x,s)\subset B{\zeta}(x,2s).$$
    Now applying Theorem \ref{ball inclusion_zeta-euclidean}, one can obtain the required inclusions.
\end{proof}

\begin{proposition}\label{ball inclusion_zeta-j,j'}
    Let $D\subsetneq\mathbb{R}^n$ be a domain and $x\in D$. Then the following inclusions hold:
    \begin{enumerate}
        \item[(i)]  $B_j(x,r)\subset B_{\zeta}(x,s)\subset B_j(x,R)$, and
        \item[(ii)]  $B_{j'}(x,r)\subset B_{\zeta}(x,s)\subset B_{j'}(x,R)$,
    \end{enumerate}
    with $r=s/2$ and $R=s$.
\end{proposition}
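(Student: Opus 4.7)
The plan here is straightforward: this proposition is a direct ball-translation of the two-sided comparison inequalities between $\zeta_D$ and $j_D$ (respectively $j'_D$) that were already established in Section~\ref{cmprsn with other metrics}. In both (i) and (ii) the radii $r=s/2$ and $R=s$ correspond exactly to the upper and lower constants in those comparisons, so the whole proof should amount to reading off the correct implication from the correct inequality.

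For part (i), I would invoke Theorem~\ref{equiv of zeta&j}(i), namely $j_D(x,y)\le \zeta_D(x,y)\le 2\,j_D(x,y)$. The upper comparison $\zeta_D\le 2 j_D$ gives $B_j(x,s/2)\subset B_\zeta(x,s)$, because $j_D(x,y)<s/2$ forces $\zeta_D(x,y)<s$. The lower comparison $j_D\le \zeta_D$ gives $B_\zeta(x,s)\subset B_j(x,s)$, which is the second inclusion with $R=s$.

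For part (ii) I need an analogous two-sided bound with the same constants $1$ and $2$. The upper half $\zeta_D\le 2\,j'_D$ is precisely Proposition~\ref{sharpness in zeta-j' inequality}. The lower half $j'_D\le \zeta_D$ is not stated separately but follows immediately by chaining $j'_D\le j_D$ from Lemma~\ref{j&j' equiv & reln wth k}(i) with $j_D\le \zeta_D$ from Theorem~\ref{equiv of zeta&j}(i); alternatively one may go through $j'_D\le \zeta'_D\le \zeta_D$ using Theorem~\ref{equiv of zeta&j}(ii) together with Proposition~\ref{eqvlnc of two metric}. Once $j'_D\le \zeta_D\le 2\,j'_D$ is in hand, the two inclusions in (ii) follow exactly as in (i).

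There is essentially no obstacle here, so the only bookkeeping point worth highlighting in the write-up is that obtaining $r=s/2$ in (ii) requires Proposition~\ref{sharpness in zeta-j' inequality}; if one instead used the weaker bound $\zeta_D\le 4\,j'_D$ from Corollary~\ref{equiv of j' and zeta and v-v}(i), one would only recover $r=s/4$, so the stronger form must be cited explicitly.
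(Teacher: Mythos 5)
Your proof is correct and follows essentially the same route as the paper: part (i) via Theorem~\ref{equiv of zeta&j}(i), and part (ii) by combining the lower bound $j'_D\le\zeta_D$ (the paper cites Corollary~\ref{equiv of j' and zeta and v-v}(i) directly, which you rederive via Lemma~\ref{j&j' equiv & reln wth k}(i) and Theorem~\ref{equiv of zeta&j}(i)) with the sharper upper bound $\zeta_D\le 2\,j'_D$ from Proposition~\ref{sharpness in zeta-j' inequality}. Your closing remark that Proposition~\ref{sharpness in zeta-j' inequality} is genuinely needed to obtain $r=s/2$ rather than $r=s/4$ is exactly the reason the paper cites it alongside the corollary.
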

\begin{proof}
    The proof follows from Theorem \ref{equiv of zeta&j} (i), Corollary \ref{equiv of j' and zeta and v-v} (i), and Proposition \ref{sharpness in zeta-j' inequality}.
\end{proof}

\begin{proposition}\label{ball inclusion_zeta'-j,j'}
    Let $D\subsetneq\mathbb{R}^n$ be a domain and $x\in D$. Then the following inclusions hold:
    \begin{enumerate}
        \item[(i)]  $B_j(x,r)\subset B_{\zeta'}(x,s)\subset B_j(x,R)$, and
        \item[(ii)]  $B_{j'}(x,r)\subset B_{\zeta'}(x,s)\subset B_{j'}(x,R)$,
    \end{enumerate}
    with $r=s/2$ and $R=2s$.
\end{proposition}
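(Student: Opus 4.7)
The proposition is really a packaging result: both parts are immediate consequences of the two-sided comparisons already established between $\zeta_D'$ and the metrics $j_D$, $j_D'$. Concretely, the plan is to invoke Corollary \ref{equiv of j' and zeta and v-v} (ii), which states that $\tfrac{1}{2}j_D(x,y)\le \zeta_D'(x,y)\le 2\,j_D(x,y)$, to handle part (i), and Theorem \ref{equiv of zeta&j} (ii), which states that $j_D'(x,y)\le \zeta_D'(x,y)\le 2\,j_D'(x,y)$, to handle part (ii). No new estimates are needed; the whole content is the translation of these metric inequalities into the language of balls.

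For part (i), fix $y\in D$ and set $r=s/2$, $R=2s$. If $y\in B_j(x,r)$ then $j_D(x,y)<s/2$, and the upper bound $\zeta_D'(x,y)\le 2\,j_D(x,y)<s$ of Corollary \ref{equiv of j' and zeta and v-v} (ii) places $y$ in $B_{\zeta'}(x,s)$, giving the first inclusion. Conversely, if $y\in B_{\zeta'}(x,s)$, the lower bound $\tfrac{1}{2}j_D(x,y)\le \zeta_D'(x,y)<s$ from the same corollary forces $j_D(x,y)<2s=R$, yielding $y\in B_j(x,R)$.

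For part (ii), the argument is entirely analogous using Theorem \ref{equiv of zeta&j} (ii). If $y\in B_{j'}(x,s/2)$, then $\zeta_D'(x,y)\le 2\,j_D'(x,y)<s$, so $y\in B_{\zeta'}(x,s)$; and if $y\in B_{\zeta'}(x,s)$, then $j_D'(x,y)\le \zeta_D'(x,y)<s\le 2s=R$, so $y\in B_{j'}(x,R)$. (This shows that in part (ii) one could actually take $R=s$, but the stated $R=2s$ suffices and keeps the formulation uniform with part (i).)

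There is no real obstacle here: the only thing worth double-checking is that in each direction one is applying the correct side of the relevant inequality to the correct containing/contained ball, and that the chosen $r=s/2$, $R=2s$ match the multiplicative constants $1/2$ and $2$ appearing in Corollary \ref{equiv of j' and zeta and v-v} (ii) and Theorem \ref{equiv of zeta&j} (ii). Given how short the argument is, I would present it as a one-paragraph proof citing these two results and writing out the four implications.
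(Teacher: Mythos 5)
Your proof is correct and follows exactly the route the paper intends: the paper's proof of this proposition is a one-line citation of Theorem \ref{equiv of zeta&j} (ii) and Corollary \ref{equiv of j' and zeta and v-v} (ii), which you have simply unpacked into the four explicit ball implications, and your observation that $R=s$ would already suffice in part (ii) is also correct.
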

\begin{proof}
    Using Theorem \ref{equiv of zeta&j} (ii), and Corollary \ref{equiv of j' and zeta and v-v} (ii), one can conclude the inclusions.
\end{proof}


\section{\bf Behavior under special maps}\label{behvr undr spcl maps}
   \subsection{Behavior under M\"{o}bius transformations} 
   In 1979, Gehring and Osgood studied the behavior of $j_D'$ under quasiconformal maps \cite[Theorem 3]{GO}. Upon carefully observing their proof, it can be seen that 
   $$j'_{D'}(f(x),f(y))\leq 2\,j'_D(x,y),$$
   under a M\"{o}bius transformation $f$ of $\overline{\mathbb{R}^n}$ and for any two points $x,y\in D$, where $D'=f(D)$. One can modify their proof and see that the metric $j_D$ behaves alike under M\"{o}bius transformations. We expect similar properties for our metrics $\zeta_D$ and $\zeta_D'$, which are presented in the following form:
 \begin{theorem}\label{zeta-under-Mob}
 	Let $f$ be a M\"{o}bius transformation of $\overline{\mathbb{R}}^n$ and $D$ be a proper bounded subdomain with $f(D)=D'$. Then for any two points $x,y\in D$, we have
    \begin{enumerate}
 	\item[(i)] $\zeta_{D'}(f(x),f(y))\leq 4\,\zeta_D(x,y)$, and\label{mob equiv}
        \item[(ii)] $\zeta'_{D'}(f(x),f(y))\leq 4\,\zeta'_D(x,y)$.
     \end{enumerate}
 \end{theorem}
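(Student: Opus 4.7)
The plan is to bootstrap the already-known M\"obius distortion bounds for the classical distance ratio metrics $j_D$ and $j_D'$ into corresponding bounds for the new metrics $\zeta_D$ and $\zeta_D'$, using the sharp two-sided equivalences established in Theorem \ref{equiv of zeta&j}. This avoids redoing any hands-on cross-ratio or inversive-distance computations and reduces everything to a clean three-step sandwich.

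For part (i), I would proceed as follows. First, apply the upper comparison $\zeta_{D'} \leq 2\,j_{D'}$ from Theorem \ref{equiv of zeta&j}(i) on the target side, so that
$$\zeta_{D'}(f(x),f(y)) \leq 2\,j_{D'}(f(x),f(y)).$$
Next, invoke the M\"obius distortion bound $j_{D'}(f(x),f(y)) \leq 2\,j_D(x,y)$, which, as the excerpt explicitly points out, follows by a minor modification of the Gehring--Osgood argument in \cite[Theorem 3]{GO}. Finally, apply the lower comparison $j_D(x,y) \leq \zeta_D(x,y)$ from Theorem \ref{equiv of zeta&j}(i) to close the loop and reach the claimed factor $4$. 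Part (ii) is entirely analogous: combine $\zeta_{D'}' \leq 2\,j_{D'}'$, then the original Gehring--Osgood bound $j_{D'}'(f(x),f(y)) \leq 2\,j_D'(x,y)$, and finally $j_D'(x,y) \leq \zeta_D'(x,y)$, again from Theorem \ref{equiv of zeta&j}.

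There is one mild technical point worth flagging. Theorem \ref{equiv of zeta&j} is stated for bounded domains, but $D'=f(D)$ under a M\"obius transformation of $\overline{\mathbb{R}}^n$ need not be bounded even when $D$ is (for instance, when $f$ sends a boundary point of $D$ to $\infty$). If $D'$ is unbounded, however, then $\zeta_{D'}=j_{D'}$ and $\zeta_{D'}'=j_{D'}'$ by construction, so the first step of the chain becomes an equality and the argument still goes through, in fact with the sharper constant $2$, which is absorbed into the claimed $4$.

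The main obstacle I anticipate is nothing conceptual, only making sure the Gehring--Osgood style bound for $j_D$ is cited or proved cleanly; this is the one ingredient in the chain not explicitly proved in the excerpt, and it is a standard absolute cross-ratio estimate of the form $|f(x)-f(y)|/\delta_{D'}(f(x)) \lesssim |x-y|/\delta_D(x) \cdot (1+|x-y|/\delta_D(y))$. Everything else is a direct composition of inequalities already in the paper.
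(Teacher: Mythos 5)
Your proposal is correct, and it arrives at the same constant $4$ by a genuinely different route than the paper. The paper's own proof is a direct cross-ratio argument at the level of $\zeta_D$: it picks a nearest boundary point $\xi$ with $|f(x)-f(\xi)|=\delta_{D'}(f(x))$ and a point $\tau$ with $f(\tau)=\infty$, uses M\"obius invariance of the absolute cross-ratio, passes from $|x-\xi|$ and $|y-\tau|$ to $\eta_D(x)$ and $\eta_D(y)$ via Lemma~\ref{Prop-of-eta-delta}(iv), splits into the cases $\tau=\infty$ and $\tau\neq\infty$, and closes with Bernoulli's inequality. You instead absorb all of that into the (already asserted) $j_D$- and $j_D'$-versions of the Gehring--Osgood bound and sandwich via Theorem~\ref{equiv of zeta&j}, getting $\zeta_{D'}\le 2j_{D'}\le 4j_D\le 4\zeta_D$ and the analogous chain for $\zeta_D'$. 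Both yield $4$; yours is shorter, but note that the paper only \emph{asserts} the M\"obius bound for $j_D$ by remarking that one can modify Gehring--Osgood's proof of Theorem~3, so self-containedness favors the paper's version, while your reduction makes the logical dependence on the classical $j$-bounds transparent. Your treatment of the case where $D'$ is unbounded (so that Theorem~\ref{equiv of zeta&j} cannot be applied on the image side but $\zeta_{D'}=j_{D'}$ by definition) is the right fix and is a point the paper's proof leaves slightly implicit, since the quantity $d(D')/\eta_{D'}$ must then be read in its degenerate form $1/\delta_{D'}$.
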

 \begin{proof}
 \begin{enumerate}
 	\item[(i)] Let $x,y$ be two fixed points in $D$. First we ensure that $\eta(f(x))\leq \eta(f(y))$, if not, first relabel them. Let us choose $\xi\in\partial D$ and $\tau \in \overline{\mathbb{R}}^n\setminus D$ such that 
 	\begin{equation*}
 		|f(x)-f(\xi)|=\delta_{D'}(f(x)),
 	\end{equation*}
 	and $f(\tau)=\infty$. Since M\"{o}bius transformations preserve cross-ratio, one can obtain
 	\begin{align}\label{cross-ratio}
 		\frac{|x-y||\xi-\tau|}{|x-\xi||y-\tau|}=\frac{|f(x)-f(y)|}{|f(x)-f(\xi)|}
 		\geq\frac{d(D')}{2}\frac{|f(x)-f(y)|}{\eta_{D'}(f(x))},
 	\end{align}
 	where the last inequality follows from \eqref{mdelta}. We now consider two cases on the point $\tau$.
 	
 	{\bf Case-I:} First, let us assume $\tau=\infty$. Then from equation \eqref{cross-ratio}, we have
 	\begin{align*}
 		\frac{|x-y|}{|x-\xi|}=\frac{d(D')}{2}\frac{|f(x)-f(y)|}{\eta(f(x))}.
 	\end{align*}
 	From Lemma \ref{Prop-of-eta-delta} (iv), we obtain
 	\begin{align}\label{mob I(i)}
 	\frac{d(D')}{2}\frac{|f(x)-f(y)|}{\eta_{D'}(f(x))}&\leq\frac{d(D)|x-y|}{\eta(x)}\\ 		
 	\implies 1+\frac{d(D')|f(x)-f(y)|}{2\,\eta_{D'}(f(x))}&\leq1+\frac{d(D)|x-y|}{\eta_D(x)\wedge\eta_D(y)}.\nonumber 
 	\end{align}
Taking logarithm on both sides and using Bernoulli's inequality to get
    \begin{align*}
    	\zeta_{D'}(f(x),f(y))\leq 2\,\zeta_D(x,y).
    \end{align*}

  {\bf Case-II:} Now, let us assume $\tau\neq\infty$. By the Euclidean triangle inequality, \eqref{cross-ratio} leads to
  \begin{align}\label{mob II(i)}
  	1+\frac{d(D')|f(x)-f(y)|}{2\,\eta_{D'}(f(x))}
  	&\leq1+\frac{|x-y|}{|y-\tau|}+\frac{|x-y|^2}{|x-\xi||y-\tau|}+\frac{|x-y|}{|x-\xi|}\nonumber\\
  	&=\left(1+\frac{|x-y|}{|x-\xi|}\right)\left(1+\frac{|x-y|}{|y-\tau|}\right)\nonumber\\
  	&\leq\left(1+\frac{d(D)|x-y|}{\eta_D(x)}\right)\left(1+\frac{d(D)|x-y|}{\eta_D(y)}\right)\\
  	&\leq\left(1+\frac{d(D)|x-y|}{\eta_D(x)\wedge\eta_D(y)}\right)^2. \nonumber 
  \end{align}
  To obtain our required inequality, we take logarithm on both sides of the preceding inequality and use Bernoulli's inequality.
  
  \item[(ii)] The proof follows from a similar technique as in part (i). 
  
\end{enumerate}
\end{proof}
 
\subsection{Behavior under quasiconformal maps}
   In \cite{GO}, Gehring and Osgood studied the quasi-invariance properties of $j_D$ and $k_D$ under quasiconformal maps. We observed in \cite[Theorem 3.13]{MaNaSa} that the behavior of $m_D$ is similar to the quasihyperbolic metric. Indeed, it has been proved that
	\begin{theorem}
		If $f$ is a $K$-quasiconformal mapping of $D_1$ to $D_2$, both are proper subdomains of $\mathbb{R}^n$, then there exists a constant $c$, depending only on $n$ and $K$, with the following property:
		\begin{align*}
			m_{D_2}(f(x),f(y))\leq c\,\max\left\{m_{D_1}(x,y),m_{D_1}(x,y)^{\alpha}\right\}, \hspace{0.5 cm}\alpha=K^{1/(1-n)},	
		\end{align*}
		for all $x,y\in D$.
	\end{theorem}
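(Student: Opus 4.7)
The plan is to reduce the claim to the well-known quasiconformal distortion theorem of Gehring and Osgood for the quasihyperbolic metric, which in our notation reads
\begin{equation*}
k_{D_2}(f(x),f(y)) \leq c_0\,\max\left\{k_{D_1}(x,y),\,k_{D_1}(x,y)^{\alpha}\right\},\qquad \alpha = K^{1/(1-n)},
\end{equation*}
with $c_0 = c_0(n,K)$. The bridge between $m_D$ and $k_D$ is already present in the definition $m_D(z) = d(D)/[\delta(z)(d(D)-\delta(z))]$; the first task is to promote it to a bilipschitz comparison of the induced path metrics.

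The key pointwise estimate is that in every bounded domain one has the inradius bound $\delta(z) \leq d(D)/2$, since any open ball $B(z,\delta(z))$ lies inside $D$ and so has diameter at most $d(D)$. This forces $d(D)-\delta(z) \geq d(D)/2$ and therefore
\begin{equation*}
\frac{1}{\delta(z)} \leq m_D(z) \leq \frac{2}{\delta(z)}.
\end{equation*}
Integrating along rectifiable paths and taking infima gives $k_D(x,y) \leq m_D(x,y) \leq 2\,k_D(x,y)$ on every bounded $D$. In unbounded $D$, the term $d(D)/(d(D)-\delta(z))$ is interpreted as $1$, and $m_D = k_D$ identically, as recorded in the introduction.

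With this comparison in hand, the proof becomes a small case analysis on whether $D_1$ and $D_2$ are bounded. In each of the four cases I would estimate $m_{D_2}(f(x),f(y)) \leq 2\,k_{D_2}(f(x),f(y))$ (or equality if $D_2$ is unbounded), apply the Gehring--Osgood inequality to pass to $k_{D_1}$, and finally replace $k_{D_1}$ on the right by $m_{D_1}$ using the reverse direction of the same comparison. All four cases yield the conclusion with constant $c := 2c_0 = c(n,K)$, and the exponent $\alpha = K^{1/(1-n)}$ is inherited verbatim from Gehring--Osgood.

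The only step carrying any genuine content is the density comparison; everything else is bookkeeping. There is no real obstacle, only the mild subtlety of checking that the inradius bound $\delta(z)\leq d(D)/2$ is used correctly in estimating $d(D)-\delta(z)$ from below, which is elementary.
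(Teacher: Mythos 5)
Your argument is correct, and it is essentially the expected one. The paper itself does not prove this theorem but cites it to Theorem~3.13 of the authors' earlier preprint; however, the bilipschitz comparison you isolate is exactly the device the authors use throughout (it appears verbatim as display \eqref{mdelta}, and the inequality $\tfrac{1}{2}m_D \le k_D \le m_D$ is quoted from their Theorem~3.5 of the same preprint in the proof of Proposition~\ref{equiv of k with zeta 1}), so your reduction to the Gehring--Osgood quasihyperbolic distortion theorem is certainly the intended route.

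Two small points. First, the density bound should be phrased without the unbounded case needing a separate paragraph: whenever $d(D)<\infty$ one has $\delta(z)\le d(D)/2$, hence $1<\frac{d(D)}{d(D)-\delta(z)}\le 2$ and so $\frac{1}{\delta(z)}<m_D(z)\le\frac{2}{\delta(z)}$, while $m_D=k_D$ if $d(D)=\infty$; in either case $k_D\le m_D\le 2k_D$, and the four-case split on boundedness of $D_1,D_2$ collapses to a single chain of inequalities. Second, when you pass from $k_{D_1}$ to $m_{D_1}$ inside the $\max$, you are implicitly using that $t\mapsto\max\{t,t^\alpha\}$ is nondecreasing on $[0,\infty)$; this holds because $\alpha=K^{1/(1-n)}\in(0,1]$ for $K\ge1$ and $n\ge2$, but it is worth stating, since for $\alpha>1$ the step would still go through but for a different reason. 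With these remarks the proof is complete, yielding $c=2c_0$ where $c_0=c_0(n,K)$ is the Gehring--Osgood constant.
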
	
A similar result for the metrics $\zeta_D$ and $\zeta_D'$ follows from \cite[Theorem 4]{GO}, Theorem \ref{equiv of zeta&j}, and Corollary \ref{equiv of j' and zeta and v-v}.
\begin{theorem}\label{dist of zeta under qc map}
	Let $f$ be a $K$-quasiconformal maps of $\mathbb{R}^n$ with $f(D)=D'$, where $D$ and $D'$ are domains in $\mathbb{R}^n$ with $d(D)<\infty$. Then there exist constants $a$ and $b$ such that for any two points $x,y\in D$ the following holds:
	\begin{align*}
		\beta_{D'}(f(x),f(y))\leq a\,\beta_D(x,y)+b,
	\end{align*} 
	where $a=2c$ and $b=2d$ and the constants $c,d$ are as in \cite[Theorem 4]{GO}. Here, $\beta_D\in\{\zeta_D,\zeta_D'\}$.
\end{theorem}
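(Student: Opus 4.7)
The plan is to reduce the claim to the corresponding distortion result of Gehring--Osgood for the classical metrics $j_D$ and $j_D'$. Recall that \cite[Theorem 4]{GO} provides constants $c, d \geq 1$ depending only on $n$ and $K$ such that $j'_{D'}(f(x),f(y)) \leq c\,j'_D(x,y) + d$ for all $x,y \in D$, and an analogous inequality $j_{D'}(f(x),f(y)) \leq c\,j_D(x,y) + d$ follows (perhaps after adjusting constants by at most a factor of $2$ via Lemma \ref{j&j' equiv & reln wth k}(i)). The strategy is then a direct chaining argument, as hinted by the sentence preceding the statement.

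For the case $\beta_D = \zeta_D$, I would proceed as follows. First, regardless of whether $D'$ is bounded or unbounded, we have $\zeta_{D'}(f(x),f(y)) \leq 2\,j_{D'}(f(x),f(y))$: in the bounded case this is Theorem \ref{equiv of zeta&j}(i), while in the unbounded case $d(D')=\infty$ forces $\zeta_{D'} = j_{D'}$ so the inequality holds trivially. Next, applying the Gehring--Osgood distortion bound yields
\[
\zeta_{D'}(f(x),f(y)) \leq 2c\,j_D(x,y) + 2d.
\]
Finally, invoking $j_D(x,y) \leq \zeta_D(x,y)$ (Theorem \ref{equiv of zeta&j}(i) in the bounded case, which is our hypothesis $d(D)<\infty$) gives
\[
\zeta_{D'}(f(x),f(y)) \leq 2c\,\zeta_D(x,y) + 2d,
\]
which is the claim with $a=2c$ and $b=2d$.

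For the case $\beta_D = \zeta_D'$, the same three-step chain applies with the primed metrics: apply $\zeta'_{D'} \leq 2\,j'_{D'}$ (Theorem \ref{equiv of zeta&j}(ii), again trivial when $D'$ is unbounded), the Gehring--Osgood bound on $j'$, and finally $j'_D \leq \zeta'_D$ from Theorem \ref{equiv of zeta&j}(ii).

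There is no substantive obstacle here; the argument is a bookkeeping chain of previously established equivalences. The only point deserving attention is that $D' = f(D)$ need not be bounded even when $D$ is (for instance, if $f$ is a M\"obius transformation sending an interior point of $D$ to infinity), so the sandwich $j \leq \zeta \leq 2j$ must be invoked on $D'$ via its trivial form in the unbounded case; the hypothesis $d(D) < \infty$ is needed only to guarantee that the final step $j_D \leq \zeta_D$ (resp.~$j'_D \leq \zeta'_D$) applies with our bounded-domain equivalence. Optionally, one could record an explicit remark that Corollary \ref{equiv of j' and zeta and v-v} allows one to mix $\zeta_D$ and $j'_D$ (or $\zeta'_D$ and $j_D$) in the chain at the cost of slightly worse multiplicative constants, but the cleanest constants are the ones obtained above.
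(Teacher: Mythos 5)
Your strategy---chaining the comparison inequalities of Section~\ref{cmprsn with other metrics} with Gehring--Osgood's Theorem~4---is the same as the paper's, but your choice of intermediate metric costs an extra factor of two. Gehring--Osgood's Theorem~4 is a bound on $j'$ as in \eqref{defn-of-j'}, not on Vuorinen's $j$. Translating their bound from $j'$ to $j$ via Lemma~\ref{j&j' equiv & reln wth k}(i), as your parenthetical concedes, yields $j_{D'}\le 2c\,j_D+2d$, so your chain $\zeta_{D'}\le 2j_{D'}\le 2(2c\,j_D+2d)\le 4c\,\zeta_D+4d$ delivers $a=4c$, $b=4d$ rather than the claimed $a=2c$, $b=2d$. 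The clean route keeps $j'$ throughout: for $\zeta_D$, use $\zeta_{D'}\le 2j'_{D'}$ (Proposition~\ref{sharpness in zeta-j' inequality}; trivially $\zeta_{D'}=j_{D'}\le 2j'_{D'}$ if $D'$ were unbounded), then $j'_{D'}\le c\,j'_D+d$ from \cite[Theorem~4]{GO}, then $j'_D\le\zeta_D$ from Corollary~\ref{equiv of j' and zeta and v-v}(i); for $\zeta'_D$, Theorem~\ref{equiv of zeta&j}(ii) serves at both ends since it gives $j'_D\le\zeta'_D\le 2j'_D$. Your closing remark thus has the comparison reversed: it is routing through $j'$, not $j$, that produces the sharper constants here, precisely because the Gehring--Osgood estimate is native to $j'$.

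One smaller point: your concern that $D'=f(D)$ might be unbounded is moot in this setting. Since $f$ is a quasiconformal homeomorphism of all of $\mathbb{R}^n$ (not of $\overline{\mathbb{R}^n}$) and $\overline{D}$ is compact, $f(\overline{D})$ is compact and $D'$ is bounded; the M\"obius example you cite belongs to the extended space and does not arise under the stated hypotheses.
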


\subsection{Behavior under quasiregular maps}	
This section shows the distortion of the metrics $\zeta_D$ and $\zeta_D'$ under quasiregular maps on the unit disk. Indeed, our result is a Schwarz-type lemma for the new metrics under quasiregular mappings of the unit disk. We directly use the following result due to Bhayo and Vuorinen.
\begin{theorem}\cite[Theorem 1.10]{BV}
	If $f:\mathbb{D}\rightarrow\mathbb{R}^2$ is a non-constant $K$-quasiregular mapping with $f(\mathbb{D})\subseteq\mathbb{D}$, then 
	\begin{align*}
		h_{\mathbb{D}}(f(x),f(y))\leq c(K)\,\max\left\{h_{\mathbb{D}}(x,y),h_{\mathbb{D}}(x,y)^{1/K}\right\}
	\end{align*}
	for all $x,y\in \mathbb{D}$, with an explicit value of $c(K)$ satisfying $c(1)=1$.
\end{theorem}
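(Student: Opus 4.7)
The plan is to combine two classical ingredients: the invariant Schwarz-type lemma for $K$-quasiregular self-maps of the disk, expressed in terms of the pseudo-hyperbolic distance $p_{\mathbb{D}}(x,y)=|x-y|/|1-\bar{x}y|$, and sharp estimates for the Mori--Hersch--Pfluger distortion function $\varphi_K\colon[0,1]\to[0,1]$ defined through the modulus of the Gr\"{o}tzsch ring. First I would invoke the invariant Schwarz lemma: for any non-constant $K$-quasiregular $f\colon\mathbb{D}\to\mathbb{D}$,
\[
p_{\mathbb{D}}(f(x),f(y))\leq \varphi_K\bigl(p_{\mathbb{D}}(x,y)\bigr)
\]
for all $x,y\in\mathbb{D}$. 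This in turn is obtained from the modulus inequality $\mathrm{Mod}(\Gamma)\leq K\,\mathrm{Mod}(f\Gamma)$, which characterises $K$-quasiregularity, together with the explicit modulus of the Teichm\"{u}ller extremal ring.

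Next I would translate this into the hyperbolic metric via the identity $h_{\mathbb{D}}(x,y)=2\,\mathrm{artanh}\,p_{\mathbb{D}}(x,y)$. Setting $t=h_{\mathbb{D}}(x,y)$ and using that $\mathrm{artanh}$ and $\varphi_K$ are increasing, the task reduces to the one-variable inequality
\[
2\,\mathrm{artanh}\bigl(\varphi_K(\tanh(t/2))\bigr)\leq c(K)\,\max\{t,\,t^{1/K}\},\qquad t>0,
\]
with a constant $c(K)$ that must collapse to $1$ at $K=1$. For small $t$ (equivalently $p_{\mathbb{D}}(x,y)$ near $0$), the Hersch--Pfluger bound $\varphi_K(r)\leq 4^{1-1/K}r^{1/K}$ together with the linearisation $\mathrm{artanh}\,u\sim u$ near the origin produces the H\"{o}lder term of order $t^{1/K}$. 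For large $t$ (equivalently $p_{\mathbb{D}}(x,y)$ near $1$), I would exploit the duality $\varphi_K(r)^2+\varphi_{1/K}\bigl(\sqrt{1-r^2}\bigr)^{2}=1$, which transfers the problem to a small-argument estimate for $\varphi_{1/K}$ and yields the linear term $c(K)\,t$.

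The hard part will be stitching the two regimes together with a single explicit constant. The asymptotic inequalities individually give the $t^{1/K}$ behaviour for small $t$ and the linear behaviour for large $t$, but producing a uniform $c(K)$ that also controls the crossover region $t\sim 1$ forces one to invoke monotonicity and convexity properties of the composite map $t\mapsto 2\,\mathrm{artanh}\,\varphi_K(\tanh(t/2))$; these follow from the Gauss hypergeometric representation of the complete elliptic integral and standard monotonicity lemmas in the theory of special functions. Finally, the normalisation $c(1)=1$ is automatic: at $K=1$ the function $\varphi_K$ is the identity, the factors $\mathrm{artanh}$ and $\tanh$ cancel, and the inequality degenerates into the classical Schwarz--Pick bound $h_{\mathbb{D}}(f(x),f(y))\leq h_{\mathbb{D}}(x,y)$.
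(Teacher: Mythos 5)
The paper does not prove this statement: it is quoted from Bhayo--Vuorinen \cite[Theorem 1.10]{BV} and used as a black box in the derivation that follows, so there is no internal proof to compare your argument against. Measured against the cited source, your plan is the right and essentially the same route: the quasiregular Schwarz lemma $p_{\mathbb{D}}(f(x),f(y))\leq\varphi_K(p_{\mathbb{D}}(x,y))$, together with the identity $h_{\mathbb{D}}=2\,\mathrm{artanh}\,p_{\mathbb{D}}$ (consistent with the density $2/(1-|z|^2)$ used in this paper), reduces everything to bounding the one-variable map $t\mapsto 2\,\mathrm{artanh}\bigl(\varphi_K(\tanh(t/2))\bigr)$, which is then attacked with the Hersch--Pfluger majorant $\varphi_K(r)\leq 4^{1-1/K}r^{1/K}$ and the functional identity $\varphi_K(r)^2+\varphi_{1/K}(\sqrt{1-r^2})^2=1$.

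Two cautions. First, $\mathrm{artanh}\,u\geq u$ near the origin (the expansion is $u+u^3/3+\cdots$), so the ``linearisation $\mathrm{artanh}\,u\sim u$'' does not by itself give the needed \emph{upper} bound; you should instead note that the Hersch--Pfluger bound keeps $\varphi_K(\tanh(t/2))$ inside $[0,1-\varepsilon]$ on the small-$t$ range, where $\mathrm{artanh}$ is Lipschitz with an explicit constant, and use $\tanh(t/2)\leq t/2$. Second, as you candidly flag, producing a single explicit $c(K)$ valid across the crossover region is the technical heart of the matter; the cited proof does this through monotonicity lemmas for ratios built from $\mu$ and $\varphi_K$, and without carrying out that step your submission is a correct plan rather than a complete proof. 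Your observation that $c(1)=1$ is automatic because $\varphi_1=\mathrm{id}$ and the inequality degenerates to Schwarz--Pick is correct.
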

First, we recall that the unit disk $\mathbb{D}$ is a well-known example of a uniform domain. Therefore, by Theorem \ref{uniform charc}, there exists $c'\geq 1$ such that $h_{\mathbb{D}}(x,y)=m_{\mathbb{D}}(x,y)\leq c'\,\zeta_{\mathbb{D}}(x,y)$, for all $x,y\in\mathbb{D}.$ Therefore, the use of Theorem \ref{equiv of zeta&m} and the above theorem provides 
\begin{align*}
	\zeta_{\mathbb{D}}(f(x),f(y))
	\leq m_{\mathbb{D}}(f(x),f(y))
	=h_{\mathbb{D}}(f(x),f(y))\
	&\leq c(K)\,\max\left\{h_{\mathbb{D}}(x,y),h_{\mathbb{D}}(x,y)^{1/K}\right\}\\
	&\leq c'\,c(K)\,\max\left\{\zeta_{\mathbb{D}}(x,y),\zeta_{\mathbb{D}}(x,y)^{1/K}\right\}.
\end{align*}
Indeed, we just obtained the following distortion result for the $\zeta_D$-metric under a $K$-quasiregular map.
\begin{theorem}
		If $f:\mathbb{D}\rightarrow\mathbb{R}^2$ is a non-constant $K$-quasiregular mapping with $f(\mathbb{D})\subseteq\mathbb{D}$, then we have
	\begin{align*}
		\beta_{\mathbb{D}}(f(x),f(y))\leq c'(K)\,\max\left\{\beta_{\mathbb{D}}(x,y),\beta_{\mathbb{D}}(x,y)^{1/K}\right\},
	\end{align*}
	for all $x,y\in \mathbb{D}$ and $c'(K)$ is a constant depending only on $K$. Here, $\beta_{\mathbb{D}}\in\{\zeta_{\mathbb{D}},\zeta_{\mathbb{D}}'\}$.
\end{theorem}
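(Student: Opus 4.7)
The plan is to combine the Bhayo--Vuorinen Schwarz-type estimate for the hyperbolic metric with the chain of equivalences between $h_{\mathbb{D}}$, $m_{\mathbb{D}}$, $\zeta_{\mathbb{D}}$, and $\zeta'_{\mathbb{D}}$ that have already been set up. I would treat the two cases $\beta_{\mathbb{D}} = \zeta_{\mathbb{D}}$ and $\beta_{\mathbb{D}} = \zeta'_{\mathbb{D}}$ separately, handling the first directly and reducing the second to the first.

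For $\beta_{\mathbb{D}} = \zeta_{\mathbb{D}}$, I would follow exactly the chain of inequalities displayed immediately above the theorem. First, since $\mathbb{D}$ is a ball, one has $m_{\mathbb{D}} = h_{\mathbb{D}}$, so Theorem \ref{equiv of zeta&m} yields $\zeta_{\mathbb{D}}(f(x),f(y)) \leq h_{\mathbb{D}}(f(x),f(y))$. Applying the cited result of Bhayo and Vuorinen to the right-hand side gives the Schwarz-type bound in terms of $h_{\mathbb{D}}(x,y)$. Finally, the fact that $\mathbb{D}$ is a uniform domain, together with the uniform-domain characterization in Theorem \ref{uniform charc}, provides a constant $c' \geq 1$ such that $h_{\mathbb{D}}(x,y) = m_{\mathbb{D}}(x,y) \leq c'\,\zeta_{\mathbb{D}}(x,y)$; inserting this into both branches of the maximum completes the estimate with constant of the form $c'(K) = c\cdot c(K)$ for a suitable $c$ depending only on $c'$ and $K$.

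For $\beta_{\mathbb{D}} = \zeta'_{\mathbb{D}}$, I would reduce to the previous case via Proposition \ref{eqvlnc of two metric}. The upper bound $\zeta'_{\mathbb{D}}(f(x),f(y)) \leq \zeta_{\mathbb{D}}(f(x),f(y))$ is immediate, and on the other side $\zeta_{\mathbb{D}}(x,y) \leq 2\,\zeta'_{\mathbb{D}}(x,y)$. Substituting these two estimates into the already proved inequality for $\zeta_{\mathbb{D}}$ introduces a factor of $2$ into both branches of the maximum on the right-hand side. Because $K \geq 1$, the elementary monotonicity inequality
\[
\max\bigl\{2a,\,(2a)^{1/K}\bigr\} \leq 2\,\max\bigl\{a,\,a^{1/K}\bigr\}, \qquad a\geq 0,
\]
allows this factor to be absorbed into the constant, yielding the desired inequality for $\zeta'_{\mathbb{D}}$ with a (possibly enlarged) constant still depending only on $K$.

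I do not anticipate a serious obstacle: every analytic ingredient is already in place, and the proof is essentially a bookkeeping exercise. The main point requiring a small amount of care is the uniform-domain step, which formally depends on Theorem \ref{uniform charc} stated later in the paper and must therefore be invoked by forward reference. The only other minor verification is the elementary two-variable inequality above, whose proof amounts to splitting into the cases $a \geq 1$ and $a < 1$ and using $2^{1/K} \leq 2$.
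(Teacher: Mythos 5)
Your proof for $\beta_{\mathbb{D}}=\zeta_{\mathbb{D}}$ is exactly the paper's argument: use $\zeta_{\mathbb{D}}\leq m_{\mathbb{D}}=h_{\mathbb{D}}$ (Theorem \ref{equiv of zeta&m} together with the ball case of $m_D$), apply the Bhayo--Vuorinen estimate, and close with $h_{\mathbb{D}}=m_{\mathbb{D}}\leq c'\,\zeta_{\mathbb{D}}$ from Theorem \ref{uniform charc}, absorbing $c'$ into both branches of the maximum since $c'\geq 1$ and $1/K\leq 1$. For $\zeta'_{\mathbb{D}}$, your reduction via Proposition \ref{eqvlnc of two metric} together with the elementary inequality $\max\{2a,(2a)^{1/K}\}\leq 2\max\{a,a^{1/K}\}$ is correct, but it is more roundabout than necessary: Theorem \ref{equiv of zeta&m} already gives $\zeta'_{\mathbb{D}}\leq m_{\mathbb{D}}$, and Theorem \ref{uniform charc} is stated for $\beta_D\in\{\zeta_D,\zeta'_D\}$, so it directly supplies $m_{\mathbb{D}}\leq c\,\zeta'_{\mathbb{D}}$; running the identical chain with $\zeta'_{\mathbb{D}}$ substituted for $\zeta_{\mathbb{D}}$ is what the paper implicitly intends, and it dispenses with the auxiliary two-variable inequality. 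This difference is cosmetic --- both routes are valid and give the stated conclusion.
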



\section{\bf Applications related to uniform domains}\label{charc-of-domains}	
\subsection{Characterization of uniform domains}
Recall that there are many characterizations of uniform domains. One of the most important characterizations is due to Gehring and Osgood \cite{GO}, which is in terms of metric inequality related to the quasihyperbolic metric and the distance ratio metric. We provide a characterization of uniform domains through inequality, concerning our new metrics $m_D$, $\zeta_D$, and $\zeta_D'$, which matches with their result in unbounded domains.
\begin{theorem}\label{uniform charc}
	A domain $D\subsetneq\mathbb{R}^n$, $n\geq 2$, is uniform if and only if for any pair points $x,y$ in $D$, there exists a universal constant $c\geq 1$ such that
	\begin{align*}
		m_D(x,y)\leq c\,\beta_D(x,y).
	\end{align*}
    Here, $\beta_D\in\{\zeta_D,\zeta_D'\}$.
\end{theorem}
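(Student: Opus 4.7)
The plan is to bootstrap the classical Gehring--Osgood characterization \eqref{GO-uniform} using the two-sided comparisons that this paper and \cite{MaNaSa} have already supplied. First I would reduce to the case $\beta_D=\zeta_D$: Proposition \ref{eqvlnc of two metric} gives $\zeta'_D\le\zeta_D\le 2\,\zeta'_D$, so an inequality $m_D\le c\,\zeta_D$ automatically upgrades to $m_D\le 2c\,\zeta'_D$, and any inequality $m_D\le c\,\zeta'_D$ trivially implies $m_D\le c\,\zeta_D$. Secondly, if $D$ is unbounded then $d(D)=\infty$ forces $\zeta_D=j_D$ and $m_D=k_D$, and the claim is literally \eqref{GO-uniform}; so only the bounded case needs work.

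The decisive input in the bounded case is the pointwise density comparison
\[
\frac{1}{\delta(z)}\le m_D(z)=\frac{d(D)}{\delta(z)\,(d(D)-\delta(z))}\le \frac{2}{\delta(z)},\qquad z\in D,
\]
which is immediate from the bound $\delta(z)\le d(D)/2$ valid in every bounded domain (since $B(z,\delta(z))\subset D$ has diameter $2\delta(z)\le d(D)$). Integrating along arcs in $\Gamma_{xy}$ and taking infima yields
\[
k_D(x,y)\le m_D(x,y)\le 2\,k_D(x,y).
\]
Combined with Theorem \ref{equiv of zeta&j}\,(i), namely $j_D\le \zeta_D\le 2\,j_D$, each direction of the biconditional is a short inequality chain.

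For the forward direction I would assume $D$ is uniform, invoke \eqref{GO-uniform} to obtain a constant $c'\ge 1$ with $k_D\le c'\,j_D$, and then conclude $m_D\le 2\,k_D\le 2c'\,j_D\le 2c'\,\zeta_D$, which gives the desired inequality with $c=2c'$. For the converse I would assume $m_D\le c\,\zeta_D$ and chain $k_D\le m_D\le c\,\zeta_D\le 2c\,j_D$, and then apply the converse half of \eqref{GO-uniform} to conclude that $D$ is uniform. The analogue for $\zeta'_D$ then follows by the reduction of the first paragraph.

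The main obstacle---really the only one---is the density comparison $k_D\le m_D\le 2\,k_D$ in the bounded setting; once it is either derived in one line as above or cited from \cite{MaNaSa}, the rest of the argument is an entirely linear composition of known inequalities.
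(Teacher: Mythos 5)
Your proposal is correct and follows essentially the same route as the paper: the authors likewise reduce the bounded case to the Gehring--Osgood characterization \eqref{GO-uniform} by chaining the two-sided comparison $k_D\le m_D\le 2\,k_D$ (cited from \cite[Theorem~3.5]{MaNaSa} rather than re-derived, but your one-line derivation from $\delta(z)\le d(D)/2$ is exactly the content of that citation) with Theorem~\ref{equiv of zeta&j}, and they handle the unbounded case by the coincidence $\zeta_D=j_D$, $m_D=k_D$. Passing between $\zeta_D$ and $\zeta'_D$ via Proposition~\ref{eqvlnc of two metric} is also the paper's implicit mechanism.
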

\begin{proof}
If $D$ is bounded, the proof follows directly from \cite[Theorem 3.5]{MaNaSa} and Theorem \ref{equiv of zeta&j}. For an unbounded domain $D$, the result agrees with \eqref{GO-uniform}.
\end{proof}
Now, we apply Theorem \ref{uniform charc} and improve the result in Proposition \ref{equiv of k with zeta 2} as follows.
  \begin{proposition}\label{equiv of k with zeta 1}
	For any bounded non-uniform domain $D$ in $\mathbb{R}^n$ and for $x,y$ in $D$, we have 
	\begin{align*}
		\zeta_D(x,y)\leq k_D(x,y).
	\end{align*}
\end{proposition}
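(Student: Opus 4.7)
The plan is to argue by contradiction, using Theorem \ref{uniform charc} in its contrapositive form. Recall that Proposition \ref{equiv of k with zeta 2} already gives $\zeta_D \leq 2\,k_D$ in any bounded domain, with the constant $2$ sharp on radial segments in the uniform domain $\mathbb{D}$. The improvement from $2$ to $1$ in the non-uniform case should therefore trace back directly to the obstruction preventing $D$ from being uniform.

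My first step would be to suppose, for contradiction, that there exist $x_0, y_0 \in D$ with $\zeta_D(x_0, y_0) > k_D(x_0, y_0)$. Combining this with the chain $\zeta_D \leq 2\,j_D \leq 2\,k_D$ (from Theorem \ref{equiv of zeta&j}(i) and Lemma \ref{j&j' equiv & reln wth k}(ii)), both links must be near-tight at $(x_0, y_0)$. Inspecting the proof of Theorem \ref{equiv of zeta&j}(i), the ratio $\zeta_D/j_D$ approaches its maximum value $2$ only when $\delta(\cdot) \to d(D)/2$, i.e., when the configuration sits near the center of a maximal embedded Euclidean ball in $D$. At the same time, $j_D \approx k_D$ forces the quasihyperbolic geodesic between $x_0$ and $y_0$ to remain essentially a straight Euclidean segment, avoiding the thin parts of $D$.

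The main obstacle, which I expect to be the technical heart of the argument, is promoting this pointwise near-extremal information at $(x_0, y_0)$ into a global uniformity inequality of the form $m_D \leq c\,\zeta_D$, which would contradict the assumed non-uniformity via Theorem \ref{uniform charc}. My planned tool is the inner-metric identity $m_D = \tilde{\zeta}_D$ from Theorem \ref{inner metric}: on short sub-segments contained in large balls $B \subset D$, the hyperbolic-type metrics $m_B = h_B$ and $\zeta_B$ are comparable with explicit constants by Corollary \ref{equiv of zeta&h in unit disk}, so a chaining argument along a near-geodesic should transfer the local comparison to a global one.

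The delicate part will be calibrating these local estimates so that the constants survive a supremum over all pairs. I expect to extract quantitative versions of both near-tightness conditions, namely $\delta(\cdot)/d(D) \to 1/2$ and $j_D/k_D \to 1$, then to use the sharp form of Theorem \ref{ball inclusion_m_D-euclidean} to enclose the extremal pair in a ball on which $\zeta_D$ and $m_D$ are equivalent. Combining this enclosure with the inner-metric identity yields a universal constant $c$ for which $m_D \leq c\,\zeta_D$ across all of $D$, contradicting Theorem \ref{uniform charc}. Thus no such pair $(x_0, y_0)$ can exist, and the inequality $\zeta_D \leq k_D$ follows throughout the non-uniform bounded domain.
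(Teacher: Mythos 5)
Your choice of negation --- that there exist $x_0,y_0$ with $\zeta_D(x_0,y_0)>k_D(x_0,y_0)$ --- is the logically correct one, and this is exactly where the plan must fail: such pairs exist in \emph{every} bounded domain, uniform or not, so no contradiction is available. To see this, take $y$ on the segment from $x$ toward the nearest boundary point, with $t:=|x-y|\to 0$. Along this segment one has $k_D(x,y)=\log\bigl(\tfrac{\delta(x)}{\delta(x)-t}\bigr)=\tfrac{t}{\delta(x)}+O(t^2)$, whereas $\zeta_D(x,y)=\log\bigl(1+\tfrac{d(D)\,t}{\eta(y)}\bigr)=\tfrac{d(D)\,t}{\eta(x)}+O(t^2)=\tfrac{t}{\delta(x)}\cdot\tfrac{d(D)}{d(D)-\delta(x)}+O(t^2)$, so $\zeta_D(x,y)/k_D(x,y)\to\tfrac{d(D)}{d(D)-\delta(x)}>1$. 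Consequently the chaining scheme you sketch cannot succeed: promoting near-extremal data at a single pair $(x_0,y_0)$ to a global inequality $m_D\leq c\,\zeta_D$ is impossible in principle, because the behavior of the metrics at one pair imposes no constraint at distant pairs. You rightly flag this step as the technical heart; it is an intrinsic obstruction, not a calibration issue awaiting careful constants.

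Comparing with the paper: its proof negates the conclusion as ``$k_D(x,y)<\zeta_D(x,y)$ for \emph{all} $x,y\in D$,'' a universal hypothesis rather than the existential negation you use. From that (much stronger) assumption the chain $\tfrac{1}{2}m_D\leq k_D<\zeta_D$ does hold at every pair, and uniformity follows from Theorem \ref{uniform charc}. That reasoning is internally sound, but it only establishes the implication ``if $k_D<\zeta_D$ at every pair then $D$ is uniform,'' whose contrapositive merely asserts that a non-uniform domain contains \emph{some} pair with $\zeta_D\leq k_D$. It does not prove the universally quantified proposition, which the local expansion above shows to be false as stated. Your correctly quantified negation is precisely what makes this gap visible; the step you flagged as the missing heart of the argument is missing because it cannot exist.
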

\begin{proof}
	Suppose, on the contrary, the reverse inequality holds in $D$, i.e., for any two points $x,y$ in $D$, we have $k_D(x,y)< \zeta_D(x,y)$. Then 
	\begin{align*}
		\frac{1}{2}\,m_D(x,y)\leq k_D(x,y)< \zeta_D(x,y),
	\end{align*}
	where the first inequality follows from \cite[Theorem 3.5]{MaNaSa}. Hence, from Theorem \ref{uniform charc}, $D$ is a uniform domain, a contradiction.
\end{proof}

\begin{remark}
	Taking $d(D)\rightarrow\infty$, we again obtain Corollary \ref{j-leq-k}.
\end{remark}

\section{\bf Concluding remarks}
{\bf Summary:} This manuscript introduces a new variant of Vuorinen’s distance ratio metric $j_D$, denoted by $\zeta_D$, along with a corresponding variant of the Gehring–Osgood metric $j_D'$, denoted by $\zeta_D'$. A primary motivation for considering these metrics is to investigate the interaction between the quasihyperbolic metric and bounded uniform domains in $\mathbb{R}^n$ for $n \ge 2$. It is further shown that the inner metric associated with $\zeta_D$ coincides with the recently introduced metric $m_D$.

We established several equivalence relations between these metrics and other hyperbolic-type metrics, together with inclusion properties of balls defined by $\zeta_D$, $\zeta_D'$, and $m_D$. Distortion properties under important classes of mappings are also investigated. As an application, we demonstrated that uniform domains can be characterized in terms of $\zeta_D$ and $m_D$.

{\bf Future Scope:} It is well known that the inner metric of $j_D$ is the quasihyperbolic metric $k_D$, and we have shown that the inner metric of $\zeta_D$ is $m_D$. However, the inner metrics of $j_D'$ and $\zeta_D'$ remain unknown. We have further examined the distortion properties of the metrics $\zeta_D$ and $\zeta_D'$ under Möbius transformations, though the question of sharpness is still unresolved. Another open problem emerging from our work is to determine the sharpness of ball inclusions related to these metrics. Moreover, it would be of significant interest to characterize the isometries and geodesics associated with these new metrics. Overall, investigations of these problems will certainly open new and fruitful avenues for early-career researchers in this field.

\section*{\bf Acknowledgements}
The work of the second author is supported by the University Grants Commission (UGC) with Ref. No.: 221610091493. 

\section*{\bf Declaration}
\noindent
{\bf Conflict of interest:} The authors declare that there is no conflict of interest regarding the publication of this article.

\end{document}